\DeclareSymbolFont{cyrillic}{T2A}{cmr}{m}{n}
\DeclareMathSymbol{\DD}{\mathalpha}{cyrillic}{196}
\newcommand{\Pre}{\operatorname{Pre}}
\newcommand{\Mis}{\mathcal{MIS}}
\newcommand{\sgn}{\operatorname{sgn}}
\newcommand{\leb}{\operatorname{Leb}}
\newcommand{\dd}{\mathrm{d}}
\newcommand{\supp}{\operatorname{supp}}
\newcommand{\isEquivTo}[1]{\underset{#1}{\sim}}
\def\R{\ensuremath{\mathds R}}
\def\N{\ensuremath{\mathds N}}
\def\RT{r} 
\def\ie{{\em i.e.}, }
\theoremstyle{plain}
\newtheorem{thm}{Theorem}[section]
\newtheorem{lem}[thm]{Lemma}
\newtheorem{cor}[thm]{Corollary}
\newtheorem{prop}[thm]{Proposition}
\theoremstyle{definition}
\newtheorem{defn}[thm]{Definition}
\newtheorem{rem}[thm]{Remark}
\newtheorem*{theorem-non}{Notation}
\numberwithin{equation}{section}
\def\namedlabel#1#2{\begingroup
	#2%
	\def\@currentlabel{#2}%
	\phantomsection\label{#1}\endgroup
}
\definecolor{unbleu}{rgb}{0.03, 0.15, 0.4}
\begin{document}

\title[Inducing techniques for quantitative recurrence and applications]{Inducing techniques for quantitative recurrence and applications to Misiurewicz maps and doubly intermittent maps}	
	
\author[D. Bansard-Tresse]{Dylan Bansard-Tresse}
\address{Dylan Bansard-Tresse\\ CPHT, \'Ecole Polytechnique, Institut Polytechnique de Paris\\ 91128 Palaiseau cedex \\ France} 
\email{\href{mailto:dylan.bansard-tresse@polytechnique.edu }{dylan.bansard-tresse@polytechnique.edu}}

\author[J. M. Freitas]{Jorge Milhazes Freitas}
\address{Jorge Milhazes Freitas\\ Centro de Matem\'{a}tica \& Faculdade de Ci\^encias da Universidade do Porto\\ Rua do Campo Alegre 687\\ 4169-007 Porto\\ Portugal}
\email{\href{mailto:jmfreita@fc.up.pt}{jmfreita@fc.up.pt}}
\urladdr{\url{http://www.fc.up.pt/pessoas/jmfreita/}}

\thanks{DBT and JMF were partially supported by FCT projects PTDC/MAT-PUR/28177/2017, PTDC/MAT-PUR/4048/2021 and 2022.07167.PTDC, with national funds, and by CMUP, which is financed by national funds 
through FCT -- Funda\c{c}\~ao para a Ci\^encia e a Tecnologia, I.P., under the project with reference UIDB/00144/2020. The authors would like to thank Romain Aimino, Th\'eophile Caby, Jean-Ren\'e Chazottes, Mike Todd and Roland Zweim\"uller for fruitful and stimulating conversations about the results in the paper.}
	
\date{\today}
\keywords{Inducing techniques, point processes, periodic points, clustering, return times, hitting times, compound Poisson process} 
\subjclass[2020]{37A50, 37B20, 60G70, 60G55,  37A25}


\begin{abstract}
We prove an abstract result establishing that one can obtain the convergence of Rare Events Point Processes counting the number of orbital visits to a sequence of shrinking target sets from the convergence of corresponding point processes for some induced system and matching shadowing shrinking sets inside the base of the inducing scheme. We apply this result to prove a dichotomy for two classes of non-uniformly hyperbolic interval maps: Misiurewicz quadratic maps and doubly intermittent maps. The dichotomy holds in the sense that the shrinking target sets may accumulate in any individual point $\zeta$ chosen in the phase space and then one either obtains a limiting homogeneous Poisson process at every non-periodic point $\zeta$ or a limiting compound Poisson process with geometric multiplicity distribution at every periodic point. We also highlight the reconstruction performed in order to recover the multiplicity distribution for a periodic orbit sitting outside the base of the induced map.   
\end{abstract}
		
\maketitle

	
\tableofcontents

	\section{Introduction}
	
Inducing techniques are very powerful tools to study the statistical properties of dynamical systems. Among these properties we are particularly interested in the extremal behaviour which is tied to  the study of quantitative recurrence to shrinking target sets in the phase space. 
The idea of using induced systems to study hitting and return times statistics appeared first in the insightful paper \cite{BSTV03}, where it was shown that, for sequences of nested balls shrinking to \emph{a.e.} point in the base of the induced system, the existence of a limiting law for the normalised return times (or hitting times) to these balls for the induced dynamics implied that the same limiting law applied for the original dynamics. This allowed them to derive hitting and return times statistics for 	non-uniformly hyperbolic interval maps such as maps with critical points or neutral fixed points, which admit induced systems with good hyperbolic properties, for which an exponential limiting law was easy to derive. We note that the existence of a limiting law for the normalised return time is equivalent to the existence of a limiting law for the normalised hitting time (in which case the orbits may not necessarily start in the target sets) and the two limits are related by an integral equation which has the standard exponential distribution as a fixed point (see \cite{HLV05}). We also observe that the existence of limiting law for the hitting time to a nested sequence of balls shrinking to a certain point $\zeta$ is equivalent to the existence of a distributional limit for the partial maxima of a stochastic process for which the observation of exceedances of high levels corresponds to the entrance in small balls around $\zeta$ (see \cite{FFT10}).

The fact that the induced and the original system shared the same limiting laws for the hitting/return times was generalised in \cite{HWZ14} so that any point could be taken as the intersection of the nested sequence of balls (instead of only typical points). This was further generalised in \cite{FFTV16}, where again the connection between the induced dynamics and the original one was established regarding the convergence of Rare Events Point Processes (REPP), which keep information not only of the first hitting/return time but also of all succeeding hits/returns. In simple terms, these point processes count the number of visits to the chosen sequence of shrinking (hence rarer) target sets.

Earlier results established that for well behaved systems, for \emph{a.e.} point $\zeta$ chosen in the phase space and for a nested sequence of balls (or cylinders) shrinking to $\zeta$, we had exponential hitting and return times statistics (meaning that the limiting law for both the normalised hitting and return time is the standard exponential distribution). On the other hand, for special points like when $\zeta$ is a periodic point, the limiting law for the hitting time is exponential with parameter $0<\theta<1$, while the law for the return time is a mixture of an exponential distribution with the same parameter and a discrete component placing a mass point at $0$ with weight $1-\theta$. This was deeply studied in \cite{FFT12}, where the periodicity of $\zeta$ was associated with the occurrence 
of clustering of rare events, so that a visit to a vicinity of $\zeta$ would usually mean the appearance of a cluster of succeeding visits, which was responsible for the mass point at $0$ observed in the return times statistics. The parameter $\theta$ measured the intensity of clustering and, following the classical Extreme Value Theory, was called Extremal Index. Moreover, in \cite{FFT12}, it was actually proved that for a uniformly hyperbolic system a dichotomy held: either we had this mixture for the return times statistics at every periodic point or we would have a standard exponential limiting law at every non-periodic point (which means $\theta=1$), with no exceptions. This dichotomy was conjectured to held in much more generality and, later, it was established for some uniformly expanding interval maps with a finite number of branches in \cite{FP12}, for maps for which there was a spectral gap for the respective transfer operators in \cite{K12} or maps with a strong form of decay of correlations in \cite{AFV15}, which included Rychlik maps (\cite{R83}), with possibly countably many branches. Nonetheless, this meant that the dichotomy held essentially for nicely expanding systems. 

In \cite{FFTV16}, using the inducing technique, the authors managed to prove the dichotomy for non-uniformly expanding systems with a neutral fixed point. It was established in terms of the convergence of REPP, whose limits were a standard homogeneous Poisson process, for all non-periodic points, and a compound Poisson process, for periodic points. The compound Poisson process could be described has having two components, one was the time positions of the clusters of rare observations, scattered in the time line according to a homogeneous Poisson process of intensity $\theta$, and the other was the Geometric multiplicity distribution of parameter $\theta$ describing the cluster sizes.  
	
One of the key aspects of the argument used in \cite{FFTV16} in order to obtain the full dichotomy, was the fact that the special structure of the Liverani-Saussol-Vaienti (LSV) maps \cite{LSV99} allowed the authors to choose different bases for the inducing scheme so that every point (except for the neutral fixed point $0$, which was analysed separately) could be covered by one of these bases. Recall that all the results mentioned above regarding the connection between the limiting laws for the induced and the original dynamics assumed always that the point $\zeta$ which is the accumulation point of the targets sets must be in the base of the induced map. In \cite{Zwe18},  Zweim\"uller managed to remove this obstruction and proved that the induced map shared the same hitting times statistics, for some shadow shrinking sets inside the base, with that of the original map, for which one considered a sequence of original shrinking target sets outside the base. This is an abstract quite general result which holds as long as the time needed to get from the shadow sets inside the base to the original target sets is negligible when compared with the expected time to return to the latter. The dichotomy for LSV maps can then be proved using only the usual induced map with base $[1/2,1]$ and even the case of the neutral fixed point can be covered by analysing its preimage  inside the base ($1/2$), as was done in \cite{Zwe18}. We also mention the paper \cite{DT21} where the authors use inducing techniques to study systems with holes outside the base. 
	
One of the main goals of this paper is to generalise Zweim\"uller's abstract result for hitting times statistics to the convergence of REPP, \ie we to establish that one can obtain the convergence of REPP for shrinking target sets outside the base from the convergence of corresponding point processes for the induced dynamics and shadowing sets inside the base. We remark that this is not straightforward because one must guarantee that the induced system does not miss clusters, which means that not only must one go fast enough from the shadow sets to the original targets as one should return to the base of the inducing scheme before returning to the target sets since, otherwise, the induced map is missing part of the action. This is carried in Section~\ref{sec:abtract-result}.	

We then apply these abstract results to prove a dichotomy regarding the convergence of REPP for two classes of non-uniformly hyperbolic interval maps. In Section~\ref{sec:Misiurewicz}, we consider Misiurewicz-Thurston quadratic maps, for which the critical point is pre-periodic and have the nice property of admitting a Rychlik induced system. 
In Section~\ref{sec:doubly-intermittent}, we prove the dichotomy for the class of doubly intermittent maps introduced very recently in \cite{CLM22}, which also admit nice induced systems.	

We observe that the induced map may not detect the clustering visits of the orbits to the target sets outside the base because these may occur before the orbits return to the base. Therefore, one must reconstruct the original point process counting all the visits to the target sets from the point process of visits to the shadowing sets inside the base. This reconstruction must be carried using the local behaviour of the dynamics in the limit of the shrinking target sets. We perform this reconstruction for target sets shrinking to a periodic point whose orbit never enters the base of the inducing scheme. This was done in Section~\ref{reconstruction_cluster} and we believe that it has an independent interest on its own.

		\section[Generalised inducing technique for the convergence of REPP]{Generalised inducing technique for the convergence of Rare Events Point Processes}
		\label{sec:abtract-result}

	Let $(\mathcal{X}, \mathscr{B},\mu,T)$ be an ergodic dynamical system where $\mathcal{X}$ is a metric space, $\mathscr{B}$ is the Borel $\sigma$-algebra on it and $T$ is a map preserving the probability measure $\mu$. 
	Let $A \in \mathscr{B}$ with $\mu(A) > 0$.  Given $x \in \mathcal{X}$, the first hitting time to $A$ as
	\[
	\RT_A(x) = \inf \big\{k\geq 1 \colon  T^kx \in A\big\}.
	\]
	When $x\in A$ we say that $r_A$ is the first return time to $A$. For all $i> 1$,  we define the $i$-th hitting/return time to $A$ inductively by 
	\[
	\RT_A^{(i)}=\RT_A\Big(T^{\RT_A^{(i-1)}(x)}(x)\Big)
	\]
	and by convention $\RT_A^{(1)}=\RT_A$.
	If $\RT_A^{(i)} = +\infty$ for some $i$, then we set $\RT_A^{(j)} = +\infty$ for $j \geq i$. The induced map $T_A\colon A\to A$ is defined by
	\[
	T_A(x) = T^{\RT_A(x)}(x).
	\] 
	This map is well defined $\mu$-almost everywhere by Poincar\'e's recurrence theorem. Then, the induced dynamical system $\big(A, T_A, \mu_A\big)$ is also an ergodic dynamical 
	system, where $\mu_A(B)=\mu(A\cap B)/\mu(A)$ for $B\in \mathscr{B}$, and $ \mathscr{B}_A =\{B \cap A : B\in \mathscr{B}\}$.  In this case, for $B \subset A$ and $x\in A$, we define the induced hitting/return times by
	\[
	 \RT^A_B(x) = \inf \big\{k\geq 1 \colon T_A^kx \in B\big\}.
	\]
	We then define the successive hitting/return times $\RT^{A,(i)}_B$ for $T_A$ in the same way as for $T$.
	
\begin{defn}	
	For $B\in \mathscr{B}$, we define the stationary process of successive hitting/return times: 
	$$\Phi_B = \big(\RT_B^{(1)}, \RT_B^{(2)}, \dots\big),$$ where the $\RT_B^{(k)}$'s are random variables with the 
	same law which is induced by $\mu$ ({\em i.e.}, 
	$\mathds{P}\big(\RT_B^{(1)}=\cdot\big)=\mathds{P}\big(\RT_B^{(k)}=\cdot\big)=\mu\big(\big\{x\in\mathcal{X}: \RT_B^{(1)}(x)=\cdot\big\}\big)$ for all $k\geq 2$). Given $B 
	\subset A$, we can define 
	the induced process on $A$ by 
	$$\Phi^A_B = \big(\RT^{A,(1)}_B, \RT^{A,(2)}_B, \dots\big),$$ which is stationary with respect to $\mu_A$. 
\end{defn}	
	\begin{defn}
	We define the Rare Event Point Processes (REPP), on $\R_0^+$, which count the number of orbital visits to the set $B\in\mathcal B$, in a normalised time frame, for the original and the induced dynamics in the following way:
	\begin{equation*}
			\mathcal{N}_B(x) = \sum_{i\geq 0} \delta_{i\cdot\mu(B)} \cdot \mathbf{1}_{B}(T^ix),\;\text{for $x\in\mathcal X$},\; \mathcal{N}^A_B(x) = \sum_{i\geq 0} \delta_{i\cdot\mu(B)} \cdot \mathbf{1}_{B}(T_A^ix), \; \text{for $x\in A$},
		\end{equation*}
		where $\delta_{z}$ denotes the Dirac measure charging the mass point $z\in\R_0^+$ and, in the second case, we have $B\subset A$.
\end{defn}
\begin{rem}
Observe that the components of the process $\mu(B)\cdot\Phi_B$ (respectively $\mu_A(B)\cdot\Phi_B^A$) correspond to the interarrival times of the projection of  the point process $\mathcal{N}_B$ (respectively $\mathcal{N}^A_B$) to the space of continuous time c\`adl\`ag stochastic processes, or, in other words, the sequences
$$\left(\mu(B)\sum_{i=1}^j \RT_B^{(i)}\right)_{j\in\N} \quad\text{and}\quad \left(\mu(B)\sum_{i=1}^j\RT_B^{A,(i)}\right)_{j\in\N}$$ correspond to the sequence of mass points charged by the point process $\mathcal{N}_B$ and its induced version $\mathcal{N}^A_B$, respectively. For this reason, we will refer to the processes $\mu(B)\cdot\Phi_B$ and $\mu_A(B)\cdot\Phi_B^A$ as the normalised interarrival times process and induced interarrival times process, while $\Phi_B$ and $\Phi_B^A$ will be referred to as the unnormalised interarrival times and induced interarrival times processes.  
\end{rem}

	We will study the convergence of REPP when the measure of the target sets $B$ shrinks to $0$, which motivates the following definition.
	
	\begin{defn}[Asymptotically rare events]
		We say that a sequence of measurable sets $(E_n)_{n\in \mathbb{N}}\subset \mathscr{B}$ is asymptotically rare if $\mu(E_n) \to 0$ when $n\to +\infty$.
	\end{defn}	
	
For a sequence $(E_n)_{n\in \mathbb{N}}$ of asymptotically rare events, we can define the sequence of REPP $(N_n)_{n\in\N}$ and when $E_n\subset A$, for all $n\in\N$, the sequence of induced REPP $(N^A_n)_{n\in\N}$ by:
$$	
N_n(x):=\mathcal N_{E_n}(x),\;\text{for $x\in\mathcal X$},\quad N^A_n(x):=\mathcal N^A_{E_n}(x),\;\text{for $x\in\mathcal A$}.
$$	
\begin{rem}
As observed in \cite[Remark~3.5]{Z22}, using the continuous mapping theorem, one can show that the weak convergence of the normalised interarrival times process $\mu(B_n)\cdot\Phi_{B_n}$ (respectively $\mu_A(B_n)\cdot\Phi^A_{B_n}$) implies the weak convergence of the point process $N_n$ (respectively $N^A_n$) on the space of Radon point measures equipped with the vague topology (see \cite[Chapter~3]{R08}). 
\end{rem}

One of the key ideas to recover the information regarding visits to sets outside the base of the induced map is to consider their respective shadows in the base. Hence, we introduce the following notion. 

	\begin{defn}[Shadow set]
		Let $A \in \mathscr{B}$ with $\mu(A) > 0$. For every $E \in \mathscr{B}$, its shadow set $E'_A$ in $A$  is \\
		\begin{equation}
			\label{eq:B-prime-definition}
			E'_A = \bigcup_{k\geq 0} A \cap \{\RT_A > k \} \cap T^{-k}(E).
		\end{equation}
	\end{defn}
	In the sequel, $A$ will be a set on which we will induce. To alleviate notation, we shall simply write $E'$ instead of $E'_A$ when it is clear from the context that we are inducing on $A$.

		\begin{theorem-non}
		We write $\xRightarrow[]{\;\mu\;\,}$ for the convergence in law under the law $\mu$
		and $\xrightarrow[]{\;\mu\;}$ for the convergence in probability. 
		\end{theorem-non} 
		
	We are now ready to state the main abstract result relating the convergence of the normalised interarrival times process and the respective induced version.

	\begin{thm} \label{abstract-theorem}
			\leavevmode\\
		Let $A \in \mathscr{B}$ with $\mu(A) > 0$.
		Let $(E_n)_{n\in \mathbb{N}}$ be a sequence of asymptotically rare events.
		Assume that the following properties hold:
		\begin{enumerate}
				\item \label{hyp:1} $\mu(E'_n)\RT_{E_n} \xrightarrow[]{\,\mu_{E'_n}\,} 0$
				\item \label{hyp:2} $\mu_{E_n} \left( \RT_{E_n} < \RT_A\right) \xrightarrow[]{} 0$.
			\end{enumerate}		
			Let $\Phi$ be a random element of $[0,\infty)^{\mathbb{N}}$. Then, we have
		\begin{equation*}
				\label{eq:Bn-prime-convergence}
				\mu_A\big(E'_n\big) \Phi^A_{E'_n} \xRightarrow[]{\;\mu_A\;\,}  \Phi
			\end{equation*}
		if and only if
		\[
		\mu(E_n) \Phi_{E_n} \xRightarrow[]{\;\mu\;\,} \Phi.
		\]
	In particular, $\mu(E'_n) \isEquivTo{\scaleto{+\infty}{4pt}} \mu(E_n)$ for $n$ large enough.
	\end{thm}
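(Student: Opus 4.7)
My plan is to prove the equivalence by coupling the successive visits of $T$ to $E_n$ with those of $T_A$ to $E'_n$, after first establishing the asymptotic $\mu(E'_n) \isEquivTo{\scaleto{+\infty}{4pt}} \mu(E_n)$. Since weak convergence of the interarrival process in $[0,\infty)^{\mathbb{N}}$ reduces, via partial sums and the continuous mapping theorem, to finite-dimensional convergence of the total hitting times, it suffices to compare, for each fixed $J$, the first $J$ total hitting times under the two normalisations.

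\emph{Preliminary asymptotic.} Kac's formula applied to the tower over $A$ gives $\mu(E_n) = \int_A V_n \,\dd \mu$, where $V_n(x) := \#\{0 \leq k < \RT_A(x) : T^k x \in E_n\}$ counts the visits to $E_n$ within the excursion based at $x \in A$, while by \eqref{eq:B-prime-definition} one has $\mu(E'_n) = \mu(\{V_n \geq 1\})$. Consequently
\[
\mu(E_n) - \mu(E'_n) = \int_A (V_n - 1)_+ \,\dd \mu.
\]
A tower-counting argument pairs each ``non-first'' visit of $T$ to $E_n$ inside an excursion with its immediate $E_n$-predecessor $z$ (which necessarily satisfies $\RT_{E_n}(z) < \RT_A(z)$), identifying the right-hand side with $\mu(E_n \cap \{\RT_{E_n} < \RT_A\})$. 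This is $o(\mu(E_n))$ by hypothesis~\eqref{hyp:2}, which settles the ``In particular'' claim.

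\emph{Coupling and normalisations.} For $x \in A$ and $j\geq 1$, let $\sigma_j(x)$ denote the $j$-th visit time of $T_A$ to $E'_n$ (so $T_A^{\sigma_j(x)}(x)\in E'_n$), $S_k(x) := \sum_{i=1}^{k}\RT_A^{(i)}(x)$ the $k$-th $T$-return time to $A$, and $\tau_j(x)$ the total $T$-time of the $j$-th visit to $E_n$. By $T_A$-stationarity and the preliminary, the $\mu_A$-probability that any of the first $J$ excursions meets $E_n$ more than once is at most $J\mu_A(\{V_n \geq 2\}) \to 0$; on its complement the first $J$ visits of $T$ to $E_n$ and of $T_A$ to $E'_n$ correspond bijectively, with
\[
\tau_j(x) = S_{\sigma_j(x)}(x) + \RT_{E_n}\bigl(T_A^{\sigma_j(x)}(x)\bigr), \qquad 1 \leq j \leq J.
\]
Birkhoff's ergodic theorem for $(A,T_A,\mu_A)$ applied to $\RT_A$, together with $\sigma_j(x) \to \infty$ in $\mu_A$-probability as $\mu(E_n)\to 0$, yields $\mu(A)\,S_{\sigma_j(x)}(x)/\sigma_j(x) \to 1$ in probability. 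Combining with $\mu(E_n) \sim \mu(A)\mu_A(E'_n)$ from the preliminary, and controlling $\mu(E_n)\,\RT_{E_n}(T_A^{\sigma_j}x)$ via hypothesis~\eqref{hyp:1} (after transferring from the stationary measure $\mu_{E'_n}$ to the law of $T_A^{\sigma_j}(x)$ under $\mu_A$, using absolute continuity of the latter with respect to $\mu_{E'_n}$), we obtain
\[
\mu(E_n)\,\tau_j(x) = \mu_A(E'_n)\,\sigma_j(x) + o_{\mu_A}(1), \qquad 1 \leq j \leq J.
\]
Taking consecutive differences restores the interarrival coordinates, so the continuous mapping theorem identifies the limits of $\mu(E_n)\Phi_{E_n}$ and $\mu_A(E'_n)\Phi^A_{E'_n}$ when the initial measure is $\mu_A$.

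\emph{Transfer to $\mu$ and main obstacle.} The passage from $\mu_A$-convergence to $\mu$-convergence is standard: writing $\mu$ in its tower representation $\mu = \mu(A)\sum_{k\geq 0}(T^k)_*(\mu_A|_{\{\RT_A > k\}})$, a $\mu$-typical orbit enters $A$ at a $\mu$-a.s.~finite time $r_A^+$ which, after multiplication by $\mu(E_n)$, vanishes in probability, while the expected number of $E_n$-visits before entering $A$ also tends to $0$. The chief technical difficulty is upgrading Zweim\"uller's hitting-time statement~\cite{Zwe18} to the full REPP: the coupling above must be controlled \emph{uniformly} across the first $J$ interarrival coordinates, rather than only for the first hit, which is resolved by $T_A$-stationarity together with a finite-window union bound powered by hypothesis~\eqref{hyp:2}.
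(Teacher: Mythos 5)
Your preliminary step is sound and matches the substance of the paper's first step: your Kac/tower identity $\mu(E_n)-\mu(E'_n)=\mu\left(E_n\cap\{\RT_{E_n}<\RT_A\}\right)$ is the same as the paper's $\mu(E'_n)=\mu\left(E_n\cap\{\RT_A\le \RT_{E_n}\}\right)$, and with hypothesis \eqref{hyp:2} it gives $\mu(E'_n)\sim\mu(E_n)$. The main argument, however, has a genuine gap exactly where the coupling has to be made quantitative. The event you must exclude is that one of the excursions containing the \emph{first $J$ visits to $E_n$} meets $E_n$ more than once; these are the excursions launched at the induced times $\sigma_1,\dots,\sigma_J$, hence typically among the first $O\!\left(1/\mu_A(E'_n)\right)$ excursions, not among the first $J$. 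Your union bound $J\,\mu_A(\{V_n\ge2\})$ therefore controls the wrong event: the points $T_A^{\sigma_j}x$ form a biased sample (they lie in $E'_n$), and $T_A$-stationarity does not give $\mu_A\!\left(T_A^{\sigma_j}x\in\{V_n\ge2\}\right)\le\mu_A(V_n\ge2)$. The correct estimate needs tightness of $\mu_A(E'_n)\sigma_J$ (supplied by whichever convergence is being assumed), then a union bound over the $\lfloor t/\mu_A(E'_n)\rfloor$ admissible values of the hitting times together with invariance of the measure; this closes only because hypothesis \eqref{hyp:2} yields the \emph{relative} smallness $\mu_A(\{V_n\ge2\})=o\!\left(\mu_A(E'_n)\right)$, which is precisely the role of the paper's estimate \eqref{use_H2} and of its sums over $p\le\lfloor t/\mu(E'_n)\rfloor$. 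The same defect occurs when you invoke hypothesis \eqref{hyp:1} ``after transferring from $\mu_{E'_n}$ to the law of $T_A^{\sigma_j}(x)$ using absolute continuity'': absolute continuity holds for each fixed $n$ but with no uniform density bound, so it cannot transport $\mu_{E'_n}\!\left(\RT_{E_n}\ge\delta/\mu(E'_n)\right)\to0$ to the hitting-location law; again one must restrict to $\{\sigma_j\le t/\mu_A(E'_n)\}$ by tightness and sum over the possible hitting times, each term being at most $\mu(E'_n)\,\mu_{E'_n}\!\left(\RT_{E_n}\ge\delta/\mu(E'_n)\right)$ by invariance.

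Once these repairs are made you essentially reproduce the engine of the paper's proof, so your skeleton (Birkhoff time change $\mu(A)S_{\sigma_j}/\sigma_j\to1$ in probability, in the spirit of Zweim\"uller, instead of the paper's coordinatewise induction comparing $\RT^{(i)}_{E_n}$ and $\RT^{(i)}_{E'_n}$ under $T$ after reducing the induced process to the original-time process for $E'_n$) is viable and genuinely different, but as written the decisive bad-event bounds are asserted rather than proved: your closing claim that the difficulty ``is resolved by $T_A$-stationarity together with a finite-window union bound'' is exactly the step that is missing, since the window has length $t/\mu_A(E'_n)$, not $J$, and the bound survives only because the bad sets have measure $o\!\left(\mu(E'_n)\right)$. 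A secondary point: your transfer from $\mu_A$ to $\mu$ via ``the expected number of $E_n$-visits before entering $A$ tends to $0$'' is not justified by the hypotheses; the paper instead invokes the measure-insensitivity of such distributional limits (Zweim\"uller's strong distributional convergence, as cited there), which is the safe route.
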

			\begin{rem}
			Roughly speaking, the first condition ensures that we go fast enough from $E'_n$ to $E_n$ so that this lag is negligible in the limit. It is similar to the hypothesis made in \cite{Zwe18}. The second condition is necessary to establish the 
			connection between the 
			convergence of  the hitting times point process for the induced system and the corresponding one for the original system. Note that this condition was not needed to establish the connection between the first hitting time for the original system and for the induced one.  It is 
			designed to guarantee 
			that, possibly, we will only miss one cluster and is crucial to make the link between the statistics between the shadowing sets $E'_n$ and our target sets $E_n$.
		\end{rem}
		
		\begin{proof}
			Let us first analyse our shadowing set $E'_n$ and at its measure. Recall that
			\[
			E'_n = \bigcup_{k \geq 0} A\cap \{r_A > k\} \cap T^{-k}E_n
			\]
			Now, by Poincar\'e recurrence theorem, we know that, up to a $\mu$-negligible set in $A$, 
			\begin{multline*}
				E'_n = \bigcup_{k \geq 0} A\cap \{r_A > k\} \cap T^{-k}\left(E_n \cap \{r_A \leq r_{E_n}\}\right) \cup\\
				\cup \bigcup_{k \geq 0} \bigcup_{p \geq 0} A\cap \{r_A > k\} \cap T^{-k}\left(E_n \cap \{r_A  > r_{E_n}\}\right) \cap T^{-p}A.
			\end{multline*}
			
			Now, we will only show that the second term of the union is included in the first one. Let $p>k$ (if not the intersection is empty) and $x \in A\cap \{r_A > k\} \cap T^{-k}\left(E_n \cap \{r_A  > r_{E_n}\}\right) \cap T^{-p}A$. Since $T^kx 
			\in E_n \cap \{r_A  > 
			r_{E_n}\}$, we consider $q = \max \{k < \ell < p \; | \; T^{\ell}x \in E_n\}$.
			Then, by definition of $q$ we have 
			\[
			x \in A \cap \{r_A > q\} \cap T^{-q}(E_n) \subset \bigcup_{k \geq 0} A\cap \{r_A > k\} \cap T^{-k}\left(E_n \cap \{r_A \leq r_{E_n}\}\right).
			\]
			So, 
			\[
			E'_n = \bigcup_{k \geq 0} A\cap \{r_A > k\} \cap T^{-k}\left(E_n \cap \{r_A \leq r_{E_n}\}\right)
			\]
			
			As the terms in the union are pairwise disjoint, we finally get 
			\[
			\mu(E'_n) = \mu\left(E_n \cap \{r_A \leq r_{E_n}\}\right).
			\]
			
			From the second hypothesis, we get
			
			\begin{equation} \label{use_H2}
				\frac{\mu \left( E_n \cap \{r_{E_n} < r_A\} \right)}{\mu(E'_n)} = \frac{\mu(E_n)}{\mu\left(E_n \cap \{r_A \leq r_{E_n}\}\right)}\mu_{E_n} \left( r_{E_n} < r_A\right) \xrightarrow[n\to +\infty]{} 0.
			\end{equation}
			
			Now, we can go on with the proof. We fix $d \geq 1$ and we want to show that we have the convergence for the first $d$ return times. Let's fix some $\varepsilon > 0$. So, we assume the first condition that is to 
			say
			\begin{equation} \label{conv_induced}
				\mu_A(E'_n) \Phi^A_{E'_n} \xRightarrow[]{\;\mu_A\;} \Phi^*.
			\end{equation}
			Since $E'_n \subset A$, we already have the equivalence of \eqref{conv_induced} with
			\begin{equation*} \label{conv_shadow_global}
				\mu(E'_n) \Phi_{E'_n} \xRightarrow[]{\;\mu\;\,} \Phi^*.
			\end{equation*}
			and
			\begin{equation*} \label{conv_shadow_global_starting_A}
				\mu(E'_n) \Phi_{E'_n} \xRightarrow[]{\;\mu_A\;} \Phi^*.
			\end{equation*}
			See \cite[Theorem 11.1 and Proposition 3.1]{Z22} or \cite{FFTV16} for example.
			We write $\Phi^*=\left(\phi^{(1)}, \ldots, \phi^{(n)}, \ldots\right)$. 
			This means that for every $i \in\{1, \ldots, d\}$,
			\begin{equation} \label{hypothesis-conv-implication-1}
			\mu\left(B_j^{\prime}\right) r_{B_j^{\prime}}^{(i)} \underset{j \rightarrow+\infty}{\stackrel{\;\mu, \mu_A\;}{\Longrightarrow}} \phi^{(i)} . 
			\end{equation}
			
			Then, we can consider $t$ large enough so that $\forall i \in \{1,\dots, d\}$, 
			\begin{equation} \label{hypothesis-conv-implication-2}
				\mu \left(\mu(E'_n)r_{E'_n}^{(i)} \geq t\right) \leq \varepsilon.
			\end{equation}
			Now, we only need to show that for every $i \in \{1,\dots, d\}$, we have 
			\begin{equation}
				\label{eq:claim1}
				\mu(E'_n)\left(r^{(i)}_{E_n} - r^{(i)}_{E'_n} \right) \xrightarrow[]{\mu_A} 0.
			\end{equation}
			
			We proceed by (strong) induction. We will prove the property: 
			\begin{equation*}
				\label{eq:induction}
				\mu(E'_n)\left(r^{(i)}_{E_n} - r^{(i)}_{E'_n} \right) \xrightarrow[n \to +\infty]{\mu} 0\quad\text{and}\quad\mu_A \left(r^{(i+1)}_{E_n} < r^{(i+1)}_{E'_n} \right)\xrightarrow[n \to +\infty]{\mu} 0.
			\end{equation*}
			
			As $\mu(E_n) \to 0$, for $n$ large enough we have $\mu\left(r_{E_n} < r_A\right) \leq \varepsilon$. Let $\delta > 0$. We have 
			\begin{align*}
				& \mu \left(\left|r_{E_n} - r_{E'_n}\right| \geq \frac{\delta}{\mu(E'_n)}\right)\\
				& \leq \mu \left(\left\{r_{E_n} - r_{E'_n} \geq \frac{\delta}{\mu(E'_n)}\right\} \cap \left\{r_{E'_n} \leq \frac{t}{\mu(E'_n)}\right\} \cap \{r_A \leq r_{E_n}\} \right) + 2\varepsilon.  \\
				& \leq \sum_{p=1}^{\lfloor t/\mu(E'_n)\rfloor} \mu\left(r_{E'_n} = p, T^{-p} \left(E'_n \cap \left\{r_{E_n} \geq \frac{\delta}{\mu(E'_n)}\right\}\right)\right) + 2\varepsilon \\
				& \leq t\mu_{E'_n} \left( \left\{r_{E_n} \geq \frac{\delta}{\mu(E'_n)}\right\}\right) + 2\varepsilon \\
				& \leq 3\varepsilon \quad \text{for $n$ large enough by hypothesis \eqref{hyp:1}.}
			\end{align*}
			Now, we get that $\mu(E'_n)r_{E_n}$ also converges to $\varphi^{(1)}$ according to $\mu$, so we can find $t'$ such that $\mu \left(\mu(E'_n)r_{E_n} \geq t'\right) \leq \varepsilon$.
			\begin{align*}
				\mu \left(r^{(2)}_{E_n} < r_{E'_n}^{(2)}\right) & \leq  \mu \left( \left\{r_{E'_n} \leq r_{E_n}\right\} \cap \left\{r^{(2)}_{E_n} < r_{E'_n}^{(2)}\right\} \right) + \mu(r_{E'_n} > r_{E_n}) \\
				& \leq \mu \left( \left\{r_{E'_n} \leq r_{E_n}\right\} \cap \left\{r^{(2)}_{E_n} < r_{E'_n}^{(2)}\right\} \cap \{r_{E_n} \leq \frac{t'}{\mu(E'_n)}\} \right) + \varepsilon \\
				&\qquad+ \mu(r_A > r_{E_n}) \\
				& \leq \sum_{p=1}^{\lfloor t'/\mu(E'_n)\rfloor} \mu\left(r_{E_n} = p , \; T^{-p} \left(E_n \cap \left\{r_{E_n} < r_A\right\}\right)\right) + 2\varepsilon \\
				&\leq  t' \frac{\mu\left(E_n \cap \left\{r_{E_n} < r_A\right\}\right)}{\mu(E'_n)} + 2\varepsilon
				\leq 3\varepsilon \quad \text{for $n$ large enough by \eqref{use_H2}}.
			\end{align*}
			
			Now, we consider $i \in \{1,\dots,d-1\}$. The induction step follows the same argument as the base case but we need to consider the convergence to 0 for the first $i$ terms. As $d$ is fixed and finite, it will only add a finite number of $
			\varepsilon$, which will not affect the convergence to $0$. Let $\delta > 0$. We have 
			\begin{align*}
				&\mu\Big( \left|r^{(i+1)}_{E_n} - r^{(i+1)}_{E'_n}\right|\geq \frac{\delta}{\mu(E'_n)}\Big) \\
				& \leq \mu \Bigg(\!\!\left\{r^{(i+1)}_{E_n} - r^{(i+1)}_{E'_n} \geq \frac{\delta}{\mu(E'_n)}\right\}\! \cap\! \left\{r^{(i+1)}_{E'_n} \leq \frac{t}{\mu(E'_n)}\right\} \!\cap\! \bigcap_{k=0}^{i} \big\{r^{(k)}_{E'_n} < r^{(k)}_{E_n}\big\} \!\Bigg) \!+ 
				(d+1)\varepsilon  \\
				& \leq \sum_{p=1}^{\lfloor t/\mu(E'_n)\rfloor} \mu\left(r^{(i+1)}_{E'_n} = p, T^{-p} \left(E'_n \cap \left\{r_{E_n} \geq \frac{\delta}{\mu(E'_n)}\right\}\right)\right) + (d+1)\varepsilon \\
				& \leq t\mu_{E'_n} \left( \left\{r_{E_n} \geq \frac{\delta}{\mu(E'_n)}\right\}\right) + (d+1)\varepsilon \\
				& \leq (d+2)\varepsilon \quad \text{for $j$ large enough by assumption.}
			\end{align*}
			
			Now, we get that $\mu(E'_n)r^{(i+1)}_{E_n}$ also converges to $\varphi^{(i+1)}$ according to $\mu$, so we can find $t'$ such that $\mu \left(\mu(E'_n)r^{(i+1)}_{E_n} \geq t'\right) \leq \varepsilon$.
			\begin{align*}
				&\mu \Big(r^{(i+2)}_{E_n} <r_{E'_n}^{(i+2)}\Big) \\
				& \leq  \mu \left( \bigcap_{k = 1}^{i+1} \left\{r^{(k)}_{E'_n} \leq r^{k}_{E_n}\right\} \cap \left\{r^{(i+2)}_{E_n} < r_{E'_n}^{(i+2)}\right\} \right) + d\varepsilon \\
				&  \leq  \mu \left( \bigcap_{k = 1}^{i+1} \left\{r^{(k)}_{E'_n} \leq r^{(k)}_{E_n}\right\} \cap \left\{r^{(i+2)}_{E_n} < r_{E'_n}^{(i+2)}\right\} \cap \left\{r_{E_n}^{(i+1)} \leq \frac{t'}{\mu(E'_n)}\right\} \right)+ (d+1)\varepsilon \\
				& \leq \sum_{p=1}^{\lfloor t'/\mu(E'_n)\rfloor} \mu\left(r_{E_n}^{(i+1)} = p , \; T^{-p} \left(E_n \cap \left\{r_{E_n} < r_A\right\}\right)\right)+ (d+1)\varepsilon \\
				& \leq t' \frac{\mu\left(E_n \cap \left\{r_{E_n} \leq r_A\right\}\right)}{\mu(E'_n)} + (d+1)\varepsilon\leq (d+2)\varepsilon \quad \text{for $n$ large enough by \eqref{use_H2}}.
			\end{align*}
			This ends the induction step and concludes the proof of \eqref{eq:claim1} and thus the first implication. \\

            \noindent The proof of the reciprocal can be dealt with the same arguments. One only needs to adapt equations \eqref{hypothesis-conv-implication-1} and \eqref{hypothesis-conv-implication-2}, which should now involve $\mu(E_n)\varphi_{E_n}$, instead, since this is the convergence we assume in this case.
		\end{proof}

		\noindent Of course, if $E_n \subset A$, then $E'_n = E_n$ and the result was already known. What is more interseting is when $\mu(E_n \cap A^c) > 0$. We give here two corollaries for shrinking balls around point $\zeta$ that are not included in $A$. They will be 
		useful for the proof of the dichotomy in the following sections. We introduce the notation $\mathcal{O}(\zeta)$ for the orbit of the point $\zeta$, which is to say that $\mathcal{O}(\zeta)=\{\zeta, T(\zeta), T^2(\zeta),\ldots\}$.
		
		\begin{cor} \label{when_hits_the_reference_set}
			Let $A \in \mathscr{B}$ with $\mu(A) > 0$. Let $\zeta \in \mathcal{X}$ be such that $\mathcal{O}(\zeta) \cap \mathring{A}\neq \emptyset$ and $T$ is continuous on the orbit of $\zeta$ until at least $T^{r_A(\zeta)-1}$. Let $(B_n)_{n\in \mathbb{N}}$ be a sequence 
			of asymptotically rare balls shrinking to $\zeta$. Assume that
			\begin{equation*} \label{condition1}
				\mu(B'_n)r_{B_n} \xrightarrow[n\to +\infty]{\mu_{B'_n}} 0.
			\end{equation*}
			Let $\Phi$ be a random element of $[0,\infty)^{\mathbb{N}}$.
			Then, we have
			\begin{equation}
					\label{eq:Bn-prime-convergence}
					\mu_A(B'_n) \Phi^A_{B'_n} \xRightarrow[n\to +\infty]{\mu_A} \Phi
			\end{equation}
			if and only if	
			\[
			\mu(B_n) \Phi_{B_n} \xRightarrow[j\to +\infty]{\mu} \Phi.
			\]
			In particular, $\mu(B'_n) = \mu(B_n)$ for $n$ large enough.
		\end{cor}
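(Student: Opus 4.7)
The plan is to reduce this corollary to Theorem~\ref{abstract-theorem} by verifying its two hypotheses for the balls $B_n$ shrinking to $\zeta$. Hypothesis~(\ref{hyp:1}) is exactly the assumption $\mu(B'_n)r_{B_n}\xrightarrow[]{\,\mu_{B'_n}\,}0$, so all the work consists of establishing hypothesis~(\ref{hyp:2}). In fact, I expect to prove the stronger statement that $\mu_{B_n}(r_{B_n}<r_A)=0$ for all $n$ sufficiently large, which will also deliver the ``in particular'' equality $\mu(B'_n)=\mu(B_n)$.

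Set $k_0 := r_A(\zeta)$, which is finite since $\mathcal O(\zeta)\cap\mathring A\neq\emptyset$, and note that the continuity hypothesis on $T$ along the orbit of $\zeta$ up to time $k_0-1$ makes $T^{k_0}$ continuous at $\zeta$. Since $T^{k_0}(\zeta)\in\mathring A$, for $n$ large we have $T^{k_0}(B_n)\subset\mathring A\subset A$, and therefore $r_A(x)\leq k_0$ for every $x\in B_n$. So once I can bound $r_{B_n}$ from below by $k_0$ on $B_n$, the set $\{x\in B_n : r_{B_n}(x)<r_A(x)\}$ will be empty.

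The key observation for that lower bound is that $T^j(\zeta)\neq\zeta$ for every $j\in\{1,\dots,k_0-1\}$. Indeed, if we had $T^j(\zeta)=\zeta$ for some such $j$, then $\zeta$ would be periodic of some period $p\leq j<k_0$, so $T^{k_0}(\zeta)$ would coincide with $T^{k_0\bmod p}(\zeta)$, which is one of the iterates $T^0(\zeta),\dots,T^{p-1}(\zeta)$; but by the definition of $k_0$ none of those lies in $A$, contradicting $T^{k_0}(\zeta)\in A$. Using continuity of $T^j$ at $\zeta$ for each $j\leq k_0-1$, the image $T^j(B_n)$ shrinks to $T^j(\zeta)\neq\zeta$, so $T^j(B_n)\cap B_n=\emptyset$ for $n$ large, and hence $r_{B_n}(x)\geq k_0$ for every $x\in B_n$. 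Combined with $r_A(x)\leq k_0$, this gives $\{r_{B_n}<r_A\}\cap B_n=\emptyset$ for $n$ large, so hypothesis~(\ref{hyp:2}) holds trivially.

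With both hypotheses verified, Theorem~\ref{abstract-theorem} yields the claimed equivalence between the convergence of $\mu_A(B'_n)\Phi^A_{B'_n}$ and of $\mu(B_n)\Phi_{B_n}$. For the final equality $\mu(B'_n)=\mu(B_n)$, I would invoke the identity $\mu(B'_n)=\mu(B_n\cap\{r_A\leq r_{B_n}\})$ already established in the proof of Theorem~\ref{abstract-theorem}: since $\mu(B_n\cap\{r_{B_n}<r_A\})=0$ for $n$ large, this gives the exact equality rather than mere asymptotic equivalence. The main conceptual step I expect to need care with is the non-periodicity argument for $\zeta$ below time $k_0$; everything else reduces to continuity and the direct application of Theorem~\ref{abstract-theorem}.
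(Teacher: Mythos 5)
Your proposal is correct and takes essentially the same route as the paper: condition \eqref{hyp:1} of Theorem~\ref{abstract-theorem} is the standing hypothesis, condition \eqref{hyp:2} holds because $B_n\cap\{r_{B_n}<r_A\}=\emptyset$ for large $n$ (continuity along the orbit giving $T^{r_A(\zeta)}(B_n)\subset A$, hence $r_A\le r_A(\zeta)\le r_{B_n}$ on $B_n$), and the equality $\mu(B'_n)=\mu(B_n)$ follows from the formula $\mu(B'_n)=\mu(B_n\cap\{r_A\le r_{B_n}\})$ established in the theorem's proof. Your explicit verification that $T^j(\zeta)\neq\zeta$ for $1\le j<r_A(\zeta)$ simply spells out what the paper subsumes under ``by continuity'' (and, like the paper, you implicitly use that the first visit of $\mathcal{O}(\zeta)$ to $A$ lands in $\mathring{A}$), so there is no substantive difference.
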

		
		\begin{proof}
			The only thing missing is condition $2$ of Theorem \ref{abstract-theorem}. This is immediate since $B_n \cap \{r_{B_n} < r_A\} = \emptyset$ by continuity and because $T^q(B_n) \subset A$ for $n$ large enough. From the formula given in the Theorem, we also 
			have $\mu(B_n) = \mu(B'_n)$ once $n$ was chosen large enough too.
		\end{proof}
		
		\begin{cor} \label{when_does_not_hit}
			Let $A \in \mathscr{B}$ with $\mu(A)> 0$. Let $\zeta \in \mathcal{X}$ be such $\mathcal{O}(\zeta) \cap A = \emptyset$. 
			Let $(B_n)_{n\in \mathbb{N}}$ be a sequence of asymptotically rare balls shrinking to $\zeta$. For all $n \in \mathbb{N}$, we define $Q(B_n) = 
			B_n \cap T^{-s}B_n^{\mathrm{c}}$ if $\zeta$ is a periodic point of prime period $s$ and $Q(B_n) = B_n$, otherwise.  Assume the two following properties :
			\begin{itemize}
				\item[\textup{(}1\textup{)}] 
				$\mu(B'_n)r_{Q(B_n)} \xrightarrow[]{\mu_{B'_n}} 0,$
				\item[\textup{(}2\textup{)}] 
				$\mu_{Q(B_n)} \left( r_{Q(B_n)} < r_A\right) = \mu_{Q(B_n)} \left( r_{B_n} < r_A\right)  \xrightarrow[]{} 0.$
			\end{itemize}
			Let $\Phi$ be a random element of $[0,\infty)^{\mathbb{N}}$.
			Then, we have
			\[
			\mu_A(B'_n) \Phi^A_{B'_n} \xRightarrow[]{\;\mu_A\;\,} \Phi
			\]
			if and only if
			\[
			\mu(Q(B_n)) \Phi_{Q(B_n)} \xRightarrow[]{\;\mu\;\,} \Phi.
			\] 
			In particular, $\mu(B'_n) \isEquivTo{\scaleto{+\infty}{4pt}}  \mu(Q(B_n))$.
		\end{cor}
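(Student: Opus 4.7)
The plan is to apply Theorem~\ref{abstract-theorem} to the sequence $E_n := Q(B_n)$. The main task will be to identify the shadow set $E'_n = Q(B_n)'$ with $B'_n$ for $n$ large enough (up to a $\mu$-null set), after which the hypotheses (1) and (2) of the present corollary translate directly into the hypotheses of the theorem.

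First I would establish the identification $B'_n = Q(B_n)'$. The inclusion $Q(B_n)' \subset B'_n$ is immediate from $Q(B_n) \subset B_n$. For the converse, given $x \in B'_n$, I would use the disjoint decomposition from the proof of Theorem~\ref{abstract-theorem} and take the largest $k_m < r_A(x)$ such that $T^{k_m}x \in B_n$. Since $\mathcal{O}(\zeta) \cap A = \emptyset$ and $\zeta$ has prime period $s$, a continuity/shadowing argument along the finite orbit of $\zeta$ guarantees that, for $n$ large enough, every $y \in B_n$ satisfies $r_A(y) > s$; applying this to $y = T^{k_m}x$ yields $r_A(x) > k_m + s$, so by the maximality of $k_m$ we have $T^{k_m+s}x \notin B_n$, which forces $T^{k_m}x \in Q(B_n)$ and hence $x \in Q(B_n)'$. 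In the non-periodic case, $Q(B_n) = B_n$ by definition and the identification is trivial.

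Once $B'_n = Q(B_n)'$ is secured, hypothesis (1) of the corollary becomes exactly hypothesis (1) of Theorem~\ref{abstract-theorem} for $E_n = Q(B_n)$, and the convergence to $0$ stated in hypothesis (2) of the corollary is hypothesis (2) of the theorem (the equality that appears inside (2) is a consistency property ensuring that $B_n$ and $Q(B_n)$ can be used interchangeably in the control of returns before hitting $A$). Invoking Theorem~\ref{abstract-theorem} will then yield the announced equivalence between the convergence of $\mu_A(Q(B_n)')\Phi^A_{Q(B_n)'}$ and that of $\mu(Q(B_n))\Phi_{Q(B_n)}$, together with the asymptotic $\mu(Q(B_n)') \isEquivTo{\scaleto{+\infty}{4pt}} \mu(Q(B_n))$; substituting $B'_n$ for $Q(B_n)'$ then gives exactly the statement of the corollary.

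The main obstacle is the identification $B'_n = Q(B_n)'$. A priori, a cluster of $B_n$-visits occurring inside a single $A$-excursion could be truncated by the return to $A$ before its last element---the one lying in $Q(B_n)$---is reached; this is precisely the difficulty that is absent in Corollary~\ref{when_hits_the_reference_set}, where the orbit is forced into $A$ on its first visit. Here the periodicity of $\zeta$ compels us to work with $Q(B_n)$ rather than $B_n$, and the control needed to close the argument is supplied jointly by the assumption $\mathcal{O}(\zeta) \cap A = \emptyset$ and the continuity of $T$ along the finite orbit of $\zeta$.
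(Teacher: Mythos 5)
Your proposal matches the paper's own proof: the paper likewise applies Theorem~\ref{abstract-theorem} to the rare events $E_n = Q(B_n)$ and simply remarks that $Q(B_n)' = B'_n$, deducing the equivalence and the asymptotic $\mu(B'_n) \isEquivTo{\scaleto{+\infty}{4pt}} \mu(Q(B_n))$ from the theorem and its condition \eqref{hyp:2}. Your detailed verification of the identification $Q(B_n)' = B'_n$ (via the last visit to $B_n$ in an $A$-excursion landing in the escape annulus) is correct and in fact more explicit than the paper's one-line remark.
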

		\begin{rem}
		The set $Q(B_n)$ is the escape annulus defined in \cite{FFT12} (its relatively simple expression is due to the choice of our target set around a periodic point of period $s$).
		\end{rem}
		\begin{proof}
			This is the application of Theorem \ref{abstract-theorem} applied to the asymptotically rare events $(Q(B_n))_{n\in \mathbb{N}}$. One just need to remark that $Q(B_n)' = B'_n$. The estimate for $\mu(B'_n)$ also comes from the proof of Theorem \ref{abstract-theorem} and especially condition $2$.
		\end{proof}

	\section{Dichotomy for Misiurewicz-Thurston quadratic maps}
	\label{sec:Misiurewicz}
	
	Let $a\in[0,2]$ and for $x\in\left[-1,1\right]$, define $T_a:x\mapsto 1-ax^2$.
	\begin{defn}
		We define $\Mis\subset \left[0,2\right]$ the Misiurewicz-Thurston set of parameters such that $T_a$ admits an absolutely continuous invariant probability measure 
		(acip) $\mu_a$ and the critical point $c=0$ is preperiodic (See \cite{M81}). 
	\end{defn}
	
	\begin{rem}
		For example, the full quadratic map ($a=2$ and $T_2 : x \mapsto 1 - 2x^2$) is a Misiurewicz-Thurston map because $T_2(0) = 1$ and for all $n \geq 2$, $T_2^{n}(0) = 0$.
	\end{rem}
	
	The main advantage of the Misiurewicz-Thurston parameters is that it is possible to build a first return map that is uniformly expanding and Markov. The construction of such a tower can be seen for example in \cite{MS93}. We list here some 
	properties. For $a \in \Mis$, 
	
	\begin{enumerate}
		
		\item \label{Induced_is_Markov} 
		There exists an interval $A_a$  containing the critical point $0$ such that $T_{A_a}$, the first return map to $A_a$, is a Markov map. Furthermore, $\mathcal{O}(0) \backslash \{0\} \cap A_a = \emptyset$ and the boundary of 
		$A_a$ consists of a periodic point $\xi_a$ and its opposite $-\xi_a$ (by symmetry $T_a(\xi_a) = T_a(-\xi_a)$). (See \cite[Lemma V.3.2 on page 364]{MS93}.)
		
		\item \label{Bounded_distortion_outside_A} 
		There exists $K < +\infty$ such that for each $n\in \mathbb{N}$ and each interval $J$ with $T_a^i(J) \cap A_a = \emptyset$ for all $i \in \{0,\dots, n-1\}$, the distortion 
		of ${T_a^n}_{\mkern 1mu \vrule height 2ex\mkern2mu J}$ is uniformly bounded by $K$. (See \cite[Proposition V.3.2 on page 364]{MS93}.) 
		
		\item \label{control_large_return_time} 
		Let $\bigcup_j I_j$ be the domain of $T_{A_a}$ and define $k(j)$ by  ${T_{A_a}}_{\mkern 1mu \vrule height 2ex\mkern2mu I_j} = T_a^{k(j)}$. Then,
		\begin{equation*}
			\sum_{j=1}^{+\infty} k(j)\,\leb(I_j) < +\infty.
		\end{equation*}
		(See \cite[Lemma V.3.3 on page 365]{MS93}.) 
		
		\item \label{Exponential_decay_k(j)} 
		Let $\Lambda_n = A_a \backslash \bigcup_{j: k(j) < n} I_j$. Then the Lebesgue measure of $\Lambda_n$ converges exponentially to $0$. Since $\dd\mu_a/\dd\leb$ is bounded away from $0$ and $+\infty$ on $A_a$,
		the $\mu_a$ measure of $\Lambda_n$ also goes exponentially to $0$. We consider $\eta < 1$ such that $\mu_a(\Lambda_n) \leq C\eta^n$. (See \cite[Lemma V.3.3 on page 365]{MS93}.)	
		
		\item  \label{density_Mis} 
		Write $c_k = T^k(0)$ for every $k \in \mathbb{N}$ and let $\rho_a :=\dd\mu_a/ \dd\leb$. There exist $\psi_0$ a $C^1$-function, $w_0, w_1 < 0$
		and constants \[
		C_k^{(0)} = \frac{\rho_a(0)}{\big|\left(T_a^{k-1}\right)'(c_1)\big|^{\frac{1}{2}}}, \quad \left|C_k^{(1)}\right| \leq \frac{U_a}{\big|\left(T_a^{k-1}\right)'(c_1)\big|^{\frac{3}{2}}}, \; \forall k \geq 1
		\]
		with $U_a \neq 0$, such that $\rho_a$ is supported in $[1-a,a]$ and
		\[
		\rho_a(x) = \psi_0(x)+\rho_a^{(0)}(x)+\rho_a^{(1)}(x)
		\]
		where
		\begin{align*}
		\rho_a^{(0)}(x)	& =   \sum_{k = 1}^{+\infty} \frac{C_k^{(0)}}{\sqrt{|x - c_k|}}\mathds{1}_{\{w_0 < s_{k-1}(x - c_k)< 0\}} \\
	         \rho_a^{(1)}(x) &= \sum_{k = 1}^{+\infty}C_k^{(1)} \sqrt{|x - c_k|}\,\mathds{1}_{\{w_1 < s_{k-1}(x - c_k)< 0\}}
		\end{align*}
		where $s_k = \sgn( (T_a^k)'(c_1)) = \sgn( (T_a^k)'(1))$. \\
		Furthermore, there exist some constants $c > 0$ such that $\rho_a \geq c$. (See \cite{Rue09} or \cite[Section 5 formula (50)]{BS21}.)
		
		\item \label{phi-mixing}
		Defining $\omega = \omega_0 := \{I_j,\; j\in\mathbb{N}\}$ the measurable partition of $A_a$ and by recursion $\omega_{n+1} = \omega \bigvee T^{-1}\omega_n$, we say that $I$ is an $(n+1)$-cylinder if $I\in \omega_n$. 
		Furthermore, we denote $\mathcal{F}_{j,k}$ the $\sigma$-algebra generated by $\omega_j,\dots, \omega_k$. Then, $(A_a, T_{A_a}, \mu_{A_a})$ is exponential $\phi$-mixing, that is to say there exists $C>0$ and 
		$0<\lambda<1$ such that for every $J \in \mathcal{F}_{0,k}$ and $D \in \mathscr{B}([-1,1])$,
		\begin{equation*}
			\label{eq:exponential-phi-mixing}
			\left|\mu_a\left(J \cap T_A^{-(n+k)} D\right)- \mu_a(J)\mu_a(D)\right| \leq C\lambda^n\mu_a(D).  
		\end{equation*}
		(See \cite[Theorem 1 (a) page 5]{AN05} for example.)
	\end{enumerate}

	\begin{rem} \label{Induced_is_Rychlik}
		The induced system $(A_a, T_{A_a}, \mu_{A_a})$ on $A_a$ is also a Rychlik system in the sense of \cite{R83}.	
	\end{rem}
	
	\begin{rem} \label{can_take_small}
		The inducing interval $A_a$ can be chosen as small as we want. One just needs to consider a periodic orbit with at least a point sufficiently close to the critical point. Then, one chooses the point of this orbit that is closest to 
		the critical point and takes its opposite as boundaries of the inducing interval. This can be done to get arbitrarily small diameter. Note that, with this construction, it is guaranteed that the periodic orbit never hits the interior 
		of the inducing set, but rather its boundary, only.
	\end{rem}
	
	\begin{rem}
		We remark that the quadratic family used in \cite{BS21}, from where we took the density formula, was slightly different, so we need to make the necessary adjustments.
	\end{rem}
	
	\begin{cor} \label{leb_equiv_mu}
		If $B$ is a set such that $B \cap \mathcal{O}(0) \backslash \{0\} = \emptyset$, then we have $\mu_{a,B} \asymp \leb_B$, where $\mu \asymp \nu$ means that there are some constants $c, C > 0$ such that $c\nu(A) \leq 
		\mu(A)\leq C\nu(A)$ for every  measurable $A$. In particular, this is the case if we take $B = A_a$.
	\end{cor}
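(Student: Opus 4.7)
The plan is to exploit the explicit density decomposition from property~(5) together with the Misiurewicz--Thurston hypothesis, which guarantees that the postcritical orbit $F:=\mathcal{O}(0)\setminus\{0\}$ is a finite set (consisting of preperiodic preimages feeding into a repelling periodic cycle) and, in the strict sense of Misiurewicz--Thurston, does not contain $0$. Under this hypothesis the strategy is simply to derive a pointwise two-sided bound $0<c\le \rho_a\le M=M(B)$ valid on $B$, which immediately yields the comparison of the two probability measures after dividing by total masses.

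The lower bound $\rho_a\ge c$ is already part of property~(5). For the upper bound, I would write $\rho_a=\psi_0+\rho_a^{(0)}+\rho_a^{(1)}$: the $C^1$ piece $\psi_0$ is bounded on $[-1,1]$, and the potentially singular pieces blow up only near the points $c_k\in F$. Since $B$ is disjoint from the closed finite set $F$ (in the applications of interest, for instance $B=A_a$, which is a closed interval, this disjointness upgrades to $\delta:=d(B,F)>0$), one has on $B$
\[
\rho_a^{(0)}(x)\le \delta^{-1/2}\sum_{k\ge 1}C_k^{(0)},\qquad \rho_a^{(1)}(x)\le \sqrt{2}\sum_{k\ge 1}\big|C_k^{(1)}\big|.
\]
Both series converge because the Misiurewicz expansion along the repelling postcritical cycle forces $\bigl|(T_a^{k-1})'(c_1)\bigr|$ to grow at a geometric rate $\lambda^{k}$ for some $\lambda>1$, which by the very definition of the constants in property~(5) gives $C_k^{(0)}=O(\lambda^{-k/2})$ and $\bigl|C_k^{(1)}\bigr|=O(\lambda^{-3k/2})$.

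With $c\le \rho_a\le M$ on $B$, integration against $\leb$ produces $c\,\leb(A\cap B)\le \mu_a(A\cap B)\le M\,\leb(A\cap B)$ and, in particular, $c\,\leb(B)\le \mu_a(B)\le M\,\leb(B)$; dividing the two inequalities yields $(c/M)\,\leb_B(A)\le \mu_{a,B}(A)\le (M/c)\,\leb_B(A)$, which is exactly the asserted $\mu_{a,B}\asymp \leb_B$. For the concrete case $B=A_a$, property~(1) gives $A_a\cap F=\emptyset$, and the closedness of $A_a$ combined with the finiteness of $F$ yields $d(A_a,F)>0$ for free. The only substantive obstacle is the summability of the series $\sum_k C_k^{(0)}$, which is precisely where the Misiurewicz hyperbolicity of the critical orbit is unavoidably used; without that geometric decay one would recover only pointwise finiteness of $\rho_a$ on $B$, whereas the $\asymp$ conclusion genuinely needs a uniform bound.
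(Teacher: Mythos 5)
Your argument is correct and is essentially the intended one: the paper gives no separate proof, treating the corollary as an immediate consequence of the density decomposition in property (5) — the uniform lower bound $\rho_a\ge c$, the fact that the singular pieces $\rho_a^{(0)},\rho_a^{(1)}$ blow up only at the finitely many postcritical points $c_k$, and the geometric decay of the coefficients $C_k^{(0)},C_k^{(1)}$ coming from the repelling postcritical cycle — exactly as you spell out before integrating the pointwise bounds. The caveat you flag yourself is the right one: the uniform upper bound really uses $d\bigl(B,\mathcal{O}(0)\setminus\{0\}\bigr)>0$ rather than bare disjointness, which is automatic in the cases the paper actually uses (e.g.\ the closed interval $B=A_a$).
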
 
	
	In the following, as we fix the index $a \in \Mis$, we will drop the indices $a$ in the definitions.	We now state the dichotomy for Misiurewicz-Thurston maps. 
	
	\begin{thm} \label{Dicho_Mis}
		Let $a \in \Mis$ and $([-1,1], T, \mu)$ the corresponding system. Let $\varphi : [0,1] \to \mathbb{R}$ be a distance observable achieving a maximum at $\zeta \in \text{supp}(\mu) = [1-a, 1]$. Let $N_n$ be the REPP associated 
		to $\varphi$ and $u_n$ such that $B_n:= \{\varphi >u_n\}$ satisfies $n\mu(B_n) \xrightarrow[n\to+\infty]{} \tau > 0$. Then, 
		\begin{itemize}
			\item[\textup{(}i\textup{)}] If $\zeta$ is not periodic, $N_n$ converges in distribution to $N$, a homogeneous Poisson Process with intensity $1$.
			\item[\textup{(}ii\textup{)}] If $\zeta$ is periodic of period $p$ and not in the critical orbit, then $N_n$ converges to $N_{\theta, \mathrm{Geo}(\theta)}$ a compound Poisson Process with intensity $\theta = 1 - |(T^p)'(\zeta)|^{-1}$ and 
			multiplicity distribution function $\pi$ given by $\pi(k) = \theta(1-\theta)^{k-1}$ for $k\in \mathbb{N}^*$.
			\item[\textup{(}iii\textup{)}] If $\zeta$ is periodic of period $p$ and belongs to the critical orbit, then $N_n$ converges to $N_{\theta, \mathrm{Geo}(\theta)}$, a compound Poisson Process with intensity $\theta = 1 - |(T^p)'(\zeta)|^{-1/2}$ 
			and multiplicity distribution function $\pi$ given by $\pi(k) = \theta(1-\theta)^{k-1}$ for $k\in \mathbb{N}^*$.
		\end{itemize}
	\end{thm}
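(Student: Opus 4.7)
The strategy is to combine the inducing transfer provided by Theorem \ref{abstract-theorem} (via Corollaries \ref{when_hits_the_reference_set} and \ref{when_does_not_hit}) with the REPP dichotomy already known for Rychlik systems \cite{AFV15}, applied to the induced map $(A_a, T_{A_a}, \mu_{A_a})$, which is Rychlik by Remark \ref{Induced_is_Rychlik}. In outline: first choose the base $A=A_a$ appropriately for each $\zeta$; identify the shadow $B'_n$ and, in the periodic case, the escape annulus $Q(B_n) = B_n \setminus T^{-p}B_n$; deduce the convergence of the induced REPP $\mu_A(B'_n)\Phi^A_{B'_n}$ from the Rychlik dichotomy; lift it via the corresponding corollary to the original REPP of $B_n$ or $Q(B_n)$; and, for periodic $\zeta$, apply the reconstruction of Section \ref{reconstruction_cluster} to pass from the Poisson statistics of $Q(B_n)$ to the compound Poisson statistics of $B_n$.

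By Remark \ref{can_take_small}, the base $A$ can be shrunk so that: in case (iii), $\zeta \in \mathcal{O}(0) \setminus \{0\}$ is automatically outside $A$ by property \ref{Induced_is_Markov}; in case (ii), $\mathcal{O}(\zeta) \cap A = \emptyset$ by choosing $A$ small enough; in case (i), either $\mathcal{O}(\zeta) \cap \mathring A \neq \emptyset$ (apply Corollary \ref{when_hits_the_reference_set}) or after further shrinking $\mathcal{O}(\zeta) \cap A = \emptyset$ (apply Corollary \ref{when_does_not_hit} with $Q(B_n) = B_n$). In each sub-case, hypothesis (1) of the corollary, $\mu(B'_n) r_{B_n} \xrightarrow[]{\mu_{B'_n}} 0$, holds because on each preimage branch of $\zeta$ inside $A$ the quantity $r_{B_n}$ is a deterministic integer, and the $\mu_{B'_n}$-mass on branches with $r_{B_n} \geq k$ decays exponentially by the expansion and distortion properties \ref{Bounded_distortion_outside_A}--\ref{Exponential_decay_k(j)}. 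Hypothesis (2), in the periodic case, follows from the fact that an orbit started in $Q(B_n)$ (which by definition has broken the $p$-periodic return) shadows $\mathcal{O}(\zeta)$, which is disjoint from $A$ but eventually escapes a small neighbourhood of $\zeta$ and meets $A$ well before the next random return to $B_n$ (a $1/\mu(B_n)$-time event).

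With the hypotheses in place, in case (i) the shadow $B'_n$ is a union of nearly-balls in $A$ accumulating on preimages of $\zeta$, each of which is non-periodic for $T_A$; the Rychlik dichotomy then yields a standard Poisson limit for $\mu_A(B'_n)\Phi^A_{B'_n}$, which the corollary transfers to $\mu(B_n)\Phi_{B_n}$. In cases (ii) and (iii), the accumulation points of $Q(B_n)'$ inside $A$ are likewise non-periodic for $T_A$ (the whole $T$-orbit of $\zeta$ is disjoint from $A$), so the Rychlik dichotomy yields a Poisson limit for $\mu(Q(B_n))\Phi_{Q(B_n)}$; the reconstruction of Section \ref{reconstruction_cluster} then identifies $\mu(B_n)\Phi_{B_n}$ as a compound Poisson process of intensity $\theta$ with geometric cluster-size of parameter $\theta$, where $\theta = \lim_n \mu(Q(B_n))/\mu(B_n)$.

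The main obstacle is the computation of $\theta$ from the explicit density formula of property \ref{density_Mis}. In case (ii), $\zeta \notin \mathcal{O}(0)$, so in a neighbourhood of $\zeta$ the density $\rho_a$ is continuous and strictly positive (the singular terms $\rho_a^{(0)}$ and $\rho_a^{(1)}$ are smooth away from the $c_k$'s); a change of variable by $T^p$ on the shrinking ball $B_n$ yields $\mu(B_n \cap T^{-p}B_n)/\mu(B_n) \to |(T^p)'(\zeta)|^{-1}$, hence $\theta = 1 - |(T^p)'(\zeta)|^{-1}$. In case (iii), $\zeta = c_j$ for some $j$, so the $\rho_a^{(0)}$ term of property \ref{density_Mis} contributes a one-sided $|x-\zeta|^{-1/2}$ singularity; weighting $\mu(B_n)$ and $\mu(B_n\cap T^{-p}B_n)$ by this singular density, the Jacobian of $T^p$ enters only under the square root, giving $\mu(B_n \cap T^{-p}B_n)/\mu(B_n) \to |(T^p)'(\zeta)|^{-1/2}$ and hence $\theta = 1 - |(T^p)'(\zeta)|^{-1/2}$. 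The delicate points are keeping track of the sign $s_{j-1}$ determining which side of $\zeta$ carries the singularity, and verifying that the reconstruction of Section \ref{reconstruction_cluster} remains compatible with this asymmetry while still delivering a geometric cluster-size distribution of parameter $\theta$.
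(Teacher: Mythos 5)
Your top-level architecture matches the paper's (inducing transfer via Corollaries~\ref{when_hits_the_reference_set} and \ref{when_does_not_hit}, Poisson limit for the annuli $Q(B_n)$, reconstruction of Section~\ref{reconstruction_cluster}, and the computation of $\theta$ from the density of property~\eqref{density_Mis}), but the central step is not justified: you obtain the convergence of the induced REPP $\mu_A(B'_n)\Phi^A_{B'_n}$ by invoking the dichotomy for Rychlik maps from \cite{AFV15}. Those results (and the related ones with spectral gap or strong decay of correlations) are proved for shrinking \emph{balls} (or nested neighbourhoods) around a single point, whereas the shadow set $B'_n$ is a countable union of small intervals $B'_n(\alpha)$ around the infinitely many preimages $\alpha\in\Pre$ of $\zeta$ inside $A$; moreover, when $\mathcal{O}(\zeta)\cap A=\emptyset$ these preimages are exactly points where $T_A$ is \emph{not defined}, i.e.\ points at which infinitely many branches $I_j$ with return times $k(j)\to\infty$ accumulate. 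No off-the-shelf dichotomy covers such target sets, and this is precisely the difficulty the paper flags ("it is not possible to immediately prove the conditions $\DD_0(u_n)^*$ and $\DD'_0(u_n)$ using the standard proof because $B'_n$ is not a ball") and then resolves by hand: Lemma~\ref{rewrite_B'_n} describes $B'_n$ as a disjoint union of branch-intervals, Lemmas~\ref{cyl_mixing}, \ref{approx_cyl_hits}, \ref{comparison_1cyl} and the subsequent lemma show that $B'_n$ can be replaced by the cylinder approximations $U^{\pm}(B'_n,v_n)$ at negligible cost (a comparison that itself needs the bounded distortion of property~\eqref{Bounded_distortion_outside_A} and the special treatment of $\alpha=0$), and only then are $\DD_q(u_n)^*$ and $\DD'_q(u_n)^*$ verified directly from the exponential $\phi$-mixing of the induced map and fed into \cite[Theorem~2.A]{FFM18}. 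Without some substitute for this block, your assertion that "the Rychlik dichotomy then yields a standard Poisson limit" for the shadow sets is a genuine gap, not a citation.

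Two secondary points. First, you route every periodic $\zeta$ through the case $\mathcal{O}(\zeta)\cap A=\emptyset$ by shrinking $A$ (Remark~\ref{can_take_small}); this is admissible in spirit (a periodic orbit cannot contain the non-periodic critical point, so a small enough base around $0$, built from a different periodic orbit than $\mathcal{O}(\zeta)$, avoids it), but the paper instead also treats the case $\mathcal{O}(\zeta)\cap\mathring{A}\neq\emptyset$ directly, where the clustering is seen by the induced map through the annuli in condition $\DD_q(u_n)^*$ with $q=\operatorname{Card}(\mathcal{O}(\zeta)\cap A)$, and no reconstruction is needed; if you insist on your reduction you must still prove the induced-level convergence for the shadow of $Q(B_n)$, which returns you to the gap above. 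Second, your verification of hypothesis (2) of Corollary~\ref{when_does_not_hit} ("the orbit meets $A$ well before the next return to $B_n$") is only heuristic; the paper's Proposition~\ref{convergence_for_Q} makes it quantitative via bounded distortion of $T^{k_n}$ on $B_n$, the estimate $\mu(\RT_{B_n}\le \RT_A)\to 0$, and a separate treatment of the density singularity when $\zeta\in\mathcal{O}(0)$. Your computation of $\theta$ in cases (ii) and (iii) and your awareness that the reconstruction must tolerate the one-sided square-root singularity (handled in Lemma~\ref{comp_meas_E_n_B_n}) are consistent with the paper.
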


	The rest of this section is dedicated to the proof of Theorem \ref{Dicho_Mis}, which will be long and split into several different cases. 
	
	\subsection{Preparatory results and observations}
	
	We start by noting that Theorem \ref{Dicho_Mis} holds for all $\zeta \in A$, by direct application of \cite[Theorem~3]{FFTV16}. Therefore, henceforth, even if not mentioned explicitly, we will always assume that $\zeta\notin A$.
	
	\begin{lem} \label{Arriving_fast_enough}
		Let $\zeta \notin A$. Then, for the sequence $(B_n)_n$ of shrinking balls to $\zeta$, we have
		\begin{equation*}
			\mu_a(B'_n)\,\RT_{B_n} \xrightarrow[n\to + \infty]{\mu_{B'_n}} 0.
		\end{equation*}
	\end{lem}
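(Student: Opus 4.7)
The plan is to exploit two simple facts: (a) for $n$ large the ball $B_n$ does not meet $A$, so every $x \in B'_n$ must visit $B_n$ \emph{before} returning to $A$; and (b) the first return time $r_A$ has an exponentially decaying tail under $\mu_A$, by property \eqref{Exponential_decay_k(j)}. Combined, these give a crude but sufficient bound $r_{B_n} < r_A$ on $B'_n$ together with an exponential tail bound on the right-hand side.

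More precisely, since $\zeta \notin A$ and the interval $A$ may be chosen so that $\zeta$ is at positive distance from it, for $n$ large enough $B_n \cap A = \emptyset$. Any $x \in B'_n \subset A$ satisfies $x \in A \cap \{r_A > k\} \cap T^{-k}B_n$ for some $k \geq 1$ (with $k \geq 1$ because $x \in A$ but $B_n \subset A^{\mathrm{c}}$), and hence $r_{B_n}(x) \leq k < r_A(x)$. Writing $\Lambda_m = \{x \in A : r_A(x) \geq m\}$ as in property \eqref{Exponential_decay_k(j)}, we get, for any $\delta > 0$,
\[
\mu_{B'_n}\bigl(\mu(B'_n)\,r_{B_n} \geq \delta\bigr) \leq \mu_{B'_n}\bigl(r_A \geq \delta/\mu(B'_n)\bigr) \leq \frac{\mu(A)\, \mu_A\bigl(\Lambda_{\lceil \delta/\mu(B'_n)\rceil}\bigr)}{\mu(B'_n)} \leq \frac{C\mu(A)\,\eta^{\lceil \delta/\mu(B'_n)\rceil}}{\mu(B'_n)}.
\]

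Finally, using the identity $\mu(B'_n) = \mu(B_n \cap \{r_A \leq r_{B_n}\}) \leq \mu(B_n)$ established at the beginning of the proof of Theorem \ref{abstract-theorem}, we have $\mu(B'_n) \to 0$ as $n \to +\infty$. Since exponential decay in $1/\mu(B'_n)$ dominates any polynomial factor $1/\mu(B'_n)^p$, the right-hand side above tends to $0$, which yields the claimed convergence in probability. The only slightly delicate step is the inequality $r_{B_n} < r_A$ on $B'_n$, which is where the hypothesis $\zeta \notin A$ is used; everything else is a direct application of the exponential tail of the return time.
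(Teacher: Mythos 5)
Your proof is correct and follows essentially the same route as the paper: the paper bounds $B'_n \cap \{\RT_{B_n} \geq \varepsilon/\mu(B'_n)\}$ by the union of branches $I_j$ with return time $k(j) \geq \varepsilon/\mu(B'_n)$, i.e.\ by $\Lambda_{\varepsilon/\mu(B'_n)}$, and concludes exactly as you do via the exponential bound $\mu(\Lambda_m) \leq C\eta^m$ of property \eqref{Exponential_decay_k(j)} divided by $\mu(B'_n) \to 0$. Your intermediate observation that $\RT_{B_n} < \RT_A$ on $B'_n$ (valid once $B_n \cap A = \emptyset$, an assumption the paper's proof also uses implicitly) is just a rephrasing of that same containment, so there is no substantive difference.
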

	
	\begin{proof}
		Consider $\varepsilon > 0$. We have 
		\begin{align*}
			B'_n \cap \{\RT_{B_n} \geq \varepsilon/ \mu(B'_n)\}  &= \bigcup_{k\geq \varepsilon/ \mu(B'_n)} \left(A \cap \{\RT_A > k\} \cap T^{-k}(B_n)\right) \\
			& \subset \bigcup_{j: k(j) \geq \varepsilon/ \mu(B'_n)} I_j \subset \Lambda_{\varepsilon/ \mu(B'_n)}.
		\end{align*}
		Hence,
		\begin{align*}
			\mu_{B'_n}(\RT_{B_n} \geq \varepsilon/ \mu(B'_n)) &\leq \mu(\Lambda_{\varepsilon/ \mu(B'_n)})/\mu(B'_n) \\
			& \leq C\eta^{\varepsilon/ \mu(B'_n)}/\mu(B'_n)  \xrightarrow[n\to + \infty]{} 0 \quad \text{by property \eqref{Exponential_decay_k(j)}}.
		\end{align*}
	\end{proof}
	
	Let $\zeta \notin A$. We need to understand better the structure of the shadow set $B'_n$ of $B_n$, in $A$. Let 
	\begin{equation*}
		\Pre_k = \big\{\alpha \in A : T^k(\alpha) = \zeta \; \text{and}\; T^j(\alpha) \notin A \; \forall j \in \{1,\dots,k\}\big\}\quad\text{and}\; \Pre = \bigcup_{k\geq 1} \Pre_k.
	\end{equation*}  We also write $A = A_- \cup A_+$ where $A_{\pm} = A\cap\mathbb{R}_{\pm}$ the two symmetric parts of $A$. Because the map is symmetric, we have $\Pre_k \cap A_- = \text{sym}(\Pre_k \cap A_+)$.
	
	\begin{lem} \label{rewrite_B'_n}
		For $n$ large enough, there exists a family of intervals $(B'_n(\alpha))_{\alpha \in \Pre}$ included in $A$ such that 
		\[B'_n = \bigcup_{\alpha \in \Pre} B'_n(\alpha) \;\; \text{and the union is disjoint.}\]
		Furthermore, if $\zeta \notin \mathcal{O}(0)$, then for all  $k \geq 1$ and for all $\alpha \in \Pre_k$, we have $T^k(B'_n(\alpha)) = B_n$.
		In particular,
		\[A \cap \{\RT_A > k\} \cap T^{-k}(B_n) = \bigcup_{\alpha \in \Pre_k} B'_n(\alpha).\]
	\end{lem}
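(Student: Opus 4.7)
My plan is to exploit the Markov structure of the first return map $T_A$ together with bounded distortion (property~(2)). Since $A\cap\{\RT_A>k\} = \bigsqcup_{j\,:\,k(j)>k}I_j$ is a disjoint union of Markov cylinders, I will analyse $T^{-k}(B_n)$ on each $I_j$ with $k(j)>k$ separately. On such an $I_j$, the orbit $T(I_j),\dots,T^{k(j)-1}(I_j)$ lies outside $A$ by the very definition of the first return; since the critical point $0$ belongs to $A$ (property~(1)), this orbit avoids the critical point altogether, so $T^k$ restricted to $I_j$ is a diffeomorphism onto the interval $T^k(I_j)\subset A^{\mathrm c}$.

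Since $\zeta\notin A$, for $n$ large enough we have $B_n\cap A=\emptyset$. Fix such an $n$ and $k\ge 1$. Every cylinder $I_j$ with $k(j)>k$ whose image $T^k(I_j)$ contains $\zeta$ supplies, via the local diffeomorphism, a unique preimage $\alpha\in I_j$ with $T^k(\alpha)=\zeta$, and this $\alpha$ belongs to $\Pre_k$; conversely every $\alpha\in\Pre_k$ with $\RT_A(\alpha)<+\infty$ arises from exactly one such $I_j$. Provided $n$ is taken large enough that $B_n\subset T^k(I_j)$ simultaneously for every relevant pair $(k,j)$, I set
\[
B'_n(\alpha) := \bigl(T^k|_{I_j}\bigr)^{-1}(B_n)\subset I_j,
\]
an interval mapped diffeomorphically by $T^k$ onto $B_n$. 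This delivers
\[
A\cap\{\RT_A>k\}\cap T^{-k}(B_n) \;=\; \bigsqcup_{\alpha\in\Pre_k} B'_n(\alpha),
\]
the union being disjoint because distinct preimages of $\zeta$ lie in distinct cylinders.

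To obtain the global decomposition of $B'_n$, I must still rule out a single $\alpha$ being counted at two different values of $k$. If $\alpha\in\Pre_k\cap\Pre_{k'}$ with $k<k'$, then $T^{k'-k}(\zeta)=\zeta$ and $\mathcal{O}(\zeta)\cap A=\emptyset$, which forces $\RT_A(\alpha)=+\infty$ and therefore $\alpha\notin\bigcup_j I_j$; this situation produces at most a countable exceptional set to which I assign the minimal admissible $k$. For all other $\alpha\in\Pre$ the cylinder $I_{j(\alpha)}$ is unambiguous, disjointness across different $k$'s follows from disjointness of the $I_j$'s, and taking the union over $k\ge 1$ recovers $B'_n=\bigsqcup_{\alpha\in\Pre}B'_n(\alpha)$. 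The refined assertion $T^k(B'_n(\alpha))=B_n$ when $\zeta\notin\mathcal{O}(0)$ is then built into the construction, since $\alpha\neq 0$ and all intermediate iterates avoid $0$, so $T^k|_{I_j}$ is a genuine diffeomorphism onto its image. The hard part will be securing the uniform containment $B_n\subset T^k(I_j)$ at a single $n$ for every relevant $(k,j)$: I expect to handle this by combining bounded distortion on non-returning pieces (property~(2)) with the exponential decay $\mu_a(\Lambda_n)\le C\eta^n$ of property~(4), which confines the cylinders with large return time to a measurably small region that can be bypassed once $n$ is sufficiently large.
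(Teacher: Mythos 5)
Your cylinder-by-cylinder strategy has a gap that is fatal precisely in the case the lemma is most needed for. You only construct $B'_n(\alpha)$ for those $\alpha\in\Pre_k$ with $\RT_A(\alpha)<+\infty$, and you treat infinite return time as an exceptional phenomenon tied to periodicity of $\zeta$. But if $\alpha\in\Pre_k$ then $T^j(\alpha)=T^{j-k}(\zeta)$ for $j>k$, so whenever $\mathcal{O}(\zeta)\cap A=\emptyset$ (the whole of Proposition~\ref{Dicho_does_not_hit}) \emph{every} $\alpha\in\Pre$ has $\RT_A(\alpha)=+\infty$ and lies in no cylinder $I_j$: it is an accumulation point of branches with $k(j)\to\infty$. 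For such $\alpha$ the component of $A\cap\{\RT_A>k\}\cap T^{-k}(B_n)$ containing $\alpha$ is a single interval straddling infinitely many $I_j$'s, and it is not $\bigl(T^k|_{I_j}\bigr)^{-1}(B_n)$ for any single $j$; conversely, the cylinders accumulating at $\alpha$ meet $T^{-k}(B_n)$ without containing any point of $\Pre_k$, so your identity $A\cap\{\RT_A>k\}\cap T^{-k}(B_n)=\bigsqcup_{\alpha\in\Pre_k}\bigl(T^k|_{I_j}\bigr)^{-1}(B_n)$ is false as stated. This cannot be waved away as a countable exceptional set: the lemma is a topological statement (each $B'_n(\alpha)$ must be one interval with $T^k(B'_n(\alpha))=B_n$), and it is used later exactly at these accumulation points (Lemma~\ref{comparison_1cyl}). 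The paper avoids the issue by working with the maximal monotonicity branch $J_k(\alpha)$ of $T^k$ around $\alpha$ (not a return-time cylinder) and then proving directly that every component of $A\cap\{\RT_A>k\}\cap T^{-k}(B_n)$ is a full component of $T^{-k}(B_n)$ containing some $\alpha\in\Pre_k$.

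Two further points. First, the step you flag as ``the hard part'' --- getting $B_n\subset T^k(I_j)$ uniformly in $(k,j)$ --- cannot be obtained from bounded distortion plus the decay $\mu(\Lambda_m)\le C\eta^m$: a measure estimate says nothing about whether a particular image interval contains $B_n$. The mechanism is combinatorial: the endpoints of $T^k(A_k(\alpha))$ lie in the \emph{finite} set $\mathcal{O}(0)\cup\mathcal{O}(\xi)$ (this is where the Misiurewicz condition and the periodicity of $\partial A$ enter), so for $\zeta$ outside that set one simply takes $n$ large enough that the radius of $B_n$ is smaller than $d\bigl(\zeta,\mathcal{O}(0)\cup\mathcal{O}(\xi)\bigr)$, uniformly in $k$ and $\alpha$. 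Second, you never treat $\alpha=0$ (relevant when $\zeta\in\mathcal{O}(0)$), where $T^k$ is two-to-one near $\alpha$ and $T^k(B'_n(0)^{\pm})\subsetneq B_n$; the disjoint-interval decomposition in the first part of the lemma must still hold in that case, which is why the paper splits into the two symmetric branches $J_k(0)^{\pm}$ there.
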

	
	\begin{proof}
		Consider first $\alpha \in \Pre$. Thus, there exists $k \in \mathbb{N}$ such that $T^k(\alpha) = \zeta$ and $T^j(\alpha) \notin A $ for all $j \in \{1,\dots,k\}$. If $\alpha \neq 0$, there exists a branch $J_k(\alpha)$ such that $T^k_{|J_k(\alpha)}$ is monotone. We consider its restriction $A_k(\alpha)$ to $A$. The endpoints of $T^k(A_k(\alpha))$ belong to $\mathcal{O}(0) \cup \mathcal{O}(\xi)$ which is finite ($0$ and $\xi$ are pre-periodic and periodic, respectively). Thus, if $\zeta \notin \mathcal{O}(0) \cup \mathcal{O}(\xi)$, as $\zeta \in T^k(A_k(\alpha))$, we can choose $n$ large enough (independently of $k$) so that $B'_n(\alpha) \subset A_k(\alpha)$ is an interval such that $T^k(B'_n(\alpha)) = B_n$. 
		
		Furthermore, for all $j \in \{1,\dots, k\}$, $T^j(B'_n(\alpha)) \cap A = \emptyset$. Indeed we cannot have $T^j(B'_n(\alpha)) \subset A$ because $T^j(\alpha) \notin A$ and $\alpha \in B'_n(\alpha)$. Now, if  $T^j(B'_n(\alpha)) \cap A \neq \emptyset$ and $T^j(B'_n(\alpha))  \not\subset A$, since $B'_n(\alpha)$ is an interval, so is $T^j(B'_n(\alpha))$ and thus it must contain $\xi$ or $-\xi$, but, in this case, since $B_n \cap \mathcal{O}(\xi) = \emptyset$, we have $T^k(B'_n(\alpha)) \neq B_n$ which is a contradiction. Thus, $B'_n(\alpha) \subset B'_n$. 		
		
		If $\alpha = 0$ (in particular $\zeta \in \mathcal{O}(0)$), we have two branches $J_k(0)^-$ and $J_k(0)^+$ symmetric such that $T^k_{|J_k(0)^-}$ and $T^k_{|J_k(0)^+}$ is monotone. We can have the same reasoning as before for each branch independently but since $T(0) = 1$ and $T^2(0) = 1-a$ are the edges of the invariant interval, we will have to consider the restrictions $B'_n(0)^{-}$ and $B'_n(0)^{+}$, but $T^k(B'_n(0)^{\pm}) \subsetneq B_n$ (because only one side is covered). Note that if $\zeta = 1$ or $\zeta= 1-a$ we have $T^k(B'_n(0)^{\pm}) = B_n$ because $\supp(\mu) = [1-a, 1]$. We write $B'_n(0) = B'_n(0)^{-} \cup B'_n(0)^{+}$. This is an interval centred at 0.

		When $\zeta \notin \mathcal{O}(0)$, for all $k\geq 1$, $T^{-k}B_n$ is a union of intervals, $\{\RT_A > k\}$ is also a union of intervals (always having the convention $\RT_A(x) = +\infty$ if $T^nx \notin A$ for all $n \geq 1$) and $A$ is an interval. So $A \cap \{\RT_A > k\} \cap T^{-k}(B_n)$ is a union of intervals. Of course, each component is included in $A$ by construction. We want to show that each component $C$ is equal to $B'_n(\alpha)$ for some $\alpha \in \Pre_k$. Consider a component of $A \cap T^{-k}(B_n)$. We can identify it with a  component $C_{n,k,i}$ of $T^{-k}(B_n)$ intersected with $A$. But $\partial A \cap C_{n,k,i} = \emptyset$ for $n$ large enough such that $B_n \cap \mathcal{O}(\partial A) = \emptyset$ (which is possible since $B_n$ is a ball centred at $\zeta$ and $\zeta \notin \partial A$). Thus $C_{n,k,i} \subset A$. Now, we consider a component of $C_{n,k,i} \cap \{\RT_A > k\}$. Assume that $C_{n,k,i} \cap \{\RT_A > k\} \neq \emptyset$. By contradiction, assume further that $C_{n,k,i} \not\subset \{\RT_A > k\}$. We recall that $C_{n,k,i}$ is still a component of $T^{-k}B_n$, so $T^k(C_{n,k,i}) \subset B_n$. Since $C_{n,k,i} \cap \{\RT_A > k\} \neq \emptyset$ and $C_{n,k,i} \cap \{\RT_A \leq k-1\} = C_{n,k,i} \cap \{\RT_A \leq k\} \neq \emptyset$, there is $1\leq j\leq k - 1$ such that $T^j(C_{n,k,i}) \cap A \neq \emptyset$ and $T^j(C_{n,k,i}) \not\subset A$. But $T^j(C_{n,k,i})$ is an interval meaning that $\partial A \cap T^j(C_{n,k,i}) \neq \emptyset$ which is impossible because $\mathcal{O}(\partial A) \cap B_n = \emptyset$ and $T^{k-j}(T^j(C_{n,k,i})) \subset B_n$, with the previous choice of $n$ large enough. So $C_{n,k,i} \subset \{\RT_A > k\}$. 
		
		We have just shown that, for $n$ large enough, each component $C$ of $A \cap \{\RT_A > k\} \cap T^{-k}(B_n)$ is actually a component of $T^{-k}(B_n)$. But every component of $T^{-k}(B_n)$ contains a preimage $\alpha$ of $\zeta$. The condition $\{\RT_A > k\}$ makes sure that $\alpha \in \Pre_k$. Thus, $C = B'_n(\alpha)$. \\ \\
		Since each $B'_n(\alpha)$ is included in $\{\RT_A >k\}$ for $\alpha \in \Pre_k$, then for every $p \neq k$ and $\alpha \in \Pre_p,\, \alpha' \in \Pre_k$, we must have $B'_n(\alpha) \cap B'_n(\alpha') = \emptyset$. If $\alpha,\alpha' \in \Pre_k$ and $\alpha \neq \alpha'$, by monotony of $T^k$ (since $\alpha$ or $\alpha'$ is different from 0), then $B'_n(\alpha) \cap B'_n(\alpha') = \emptyset$, again. 
	\end{proof}
	
	The idea is now to approximate $B'_n$ by a union of cylinders so that we can use the good mixing properties on cylinders and ultimately obtain the desired convergence in the spirit of \cite{HP14,KY21}, for example.
	
	For that purpose, we will need to distinguish between points $\zeta$ for which $\mathcal{O}(\zeta) \cap \mathring{A}\neq \emptyset$ and such that $\mathcal{O}(\zeta) \cap A = \emptyset$. The case $\mathcal{O}(\zeta) \cap \partial A \neq \emptyset$ will be handled easily in the end (see Remark~\ref{rem:border-case}).

	When $\mathcal{O}(\zeta) \cap A = \emptyset$, for every $\alpha \in \Pre$, we have $\mathcal{O}(\alpha) \backslash \{\alpha\} \cap A = \emptyset$ and therefore $\Pre$ consists of special points where the induced map is not defined, \ie it consists of points where we will have an accumulation of branches with return time to $A$ growing to $+\infty$ (thus an accumulation of 1-cylinders in the sense of the construction of the Young tower).

	\begin{lem} \label{cyl_mixing}
		Let $\mu$ be a $\phi$-mixing measure. Then, there exist positive constants $C$ and $\lambda <1 $ such that for all $n \geq 1$ and all $A = [a_0^{n-1}] \in \omega_n$, 
		\begin{equation*}
			\mu(A) \leq C\mu([a_0])\lambda^n.
		\end{equation*}
	\end{lem}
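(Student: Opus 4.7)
The plan is to iterate the $\phi$-mixing property with a fixed large gap, deriving a contractive recurrence on cylinder measures and extracting the factor $\mu([a_0])$ from the very first step of the iteration.

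First, for any gap $N \geq 1$, dropping the specifications on the intermediate positions $1,\dots,N-1$ gives the inclusion
\[
[a_0^{n-1}] \subset [a_0] \cap T_A^{-N}[a_N^{n-1}].
\]
Applying the $\phi$-mixing property with $J = [a_0] \in \mathcal{F}_{0,0}$ and $D = [a_N^{n-1}]$ then yields
\[
\mu([a_0^{n-1}]) \leq \bigl(\mu([a_0]) + C\lambda^N\bigr)\,\mu([a_N^{n-1}]).
\]
Iterating the same bound on the tail cylinders $[a_N^{n-1}], [a_{2N}^{n-1}], \dots$ gives after $m = \lfloor (n-1)/N\rfloor$ steps
\[
\mu([a_0^{n-1}]) \leq \bigl(\mu([a_0]) + C\lambda^N\bigr)\prod_{i=1}^{m-1}\bigl(\mu([a_{iN}]) + C\lambda^N\bigr).
\]

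Since the partition $\omega=\{I_j\}_{j\in\mathbb{N}}$ is countably infinite with finite total measure, $M := \sup_j \mu(I_j) < 1$, and by choosing $N$ large enough one can ensure $\beta := M + C\lambda^N < 1$. The product is then bounded by $\beta^{m-1}$, giving
\[
\mu([a_0^{n-1}]) \leq C'\bigl(\mu([a_0]) + C\lambda^N\bigr)\tilde{\lambda}^n, \qquad \tilde{\lambda} := \beta^{1/N} < 1.
\]

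The hard part is converting the first factor into $\mu([a_0])$ alone. In the regime $\mu([a_0]) \geq C\lambda^N$ it is at most $2\mu([a_0])$ and the desired inequality is immediate. For the complementary regime of very small $\mu([a_0])$, the $\phi$-mixing bound is too weak and I appeal instead to the Rychlik structure of $(A_a,T_{A_a},\mu_{A_a})$ recorded in Remark~\ref{Induced_is_Rychlik}: the Markov full-branch property combined with uniform expansion and the bounded distortion inherited from Property~\eqref{Bounded_distortion_outside_A} yields $\leb([a_0^{n-1}])/\leb([a_0]) \asymp \leb([a_1^{n-1}])/\leb(A) \leq K \Lambda^{-(n-1)}$ for the expansion rate $\Lambda > 1$, and Corollary~\ref{leb_equiv_mu} transfers this Lebesgue estimate to $\mu$, giving $\mu([a_0^{n-1}])/\mu([a_0]) \leq K' \Lambda^{-n}$. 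Combining the two regimes closes the argument with some $\lambda < 1$.
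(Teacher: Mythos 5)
Your argument is essentially correct for the system to which Lemma~\ref{cyl_mixing} is actually applied, but it is a genuinely different, and strictly less general, route than the standard one (the paper itself offers no proof, only a pointer to the references below the lemma). The standard proof telescopes in the opposite direction: writing $[a_0^{n-1}]\subset\bigcap_{i=0}^{m-1}T_A^{-iN}[a_{iN}]$, one peels off the \emph{last} block, taking $J=\bigcap_{i\leq m-2}T_A^{-iN}[a_{iN}]$ (which carries the $[a_0]$ information) and $D=[a_{(m-1)N}]$; with the usual convention that the $\phi$-mixing error is proportional to $\mu(J)$, each step contributes a multiplicative factor $\mu([a_{(m-1)N}])+\phi(N-1)\leq M+\phi(N-1)<1$ while $\mu([a_0])$ rides along inside $\mu(J)$ untouched, so no additive correction ever appears and the lemma holds for an arbitrary $\phi$-mixing measure. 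You peel from the past instead, which is what the inequality in \eqref{phi-mixing} as literally written (error proportional to $\mu(D)$, the future set) forces you to do, and this is exactly what produces the additive term $C\lambda^N$ you then have to fight. Your patch for the regime $\mu([a_0])<C\lambda^N$ is valid, but two caveats are in order. First, it quietly replaces the hypothesis ``$\mu$ is $\phi$-mixing'' by the full-branch Markov structure, uniform expansion and uniform distortion of $T_{A_a}$, so what you establish is the conclusion for the induced Misiurewicz system rather than the lemma as stated; moreover that second argument alone already yields $\mu([a_0^{n-1}])\leq K\Lambda^{-n}\mu([a_0])$ for \emph{all} cylinders, which makes your $\phi$-mixing iteration redundant. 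Second, the distortion you invoke is that of the return branches $T_{A_a}|_{I_j}$ uniformly in $j$, which is not literally Property~\eqref{Bounded_distortion_outside_A} (that statement concerns intervals whose whole orbit segment avoids $A_a$, whereas $I_j\subset A_a$); the uniform distortion of the first return map is true and is used elsewhere in the paper, but it should be quoted as such rather than derived from \eqref{Bounded_distortion_outside_A}.
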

	
	For the proof, see for example \cite[Lemma 1]{AAG21}, \cite{Aba01}.

	For a set $B\subset A$ and $v \in \mathbb{N}^{*}$, we define 
	\[
	U^+(B, v) := \bigcup_{A \in \omega_{v-1} : A \cap B \neq \emptyset} A\qquad\text{and}\qquad U^-(B, v) := \bigcup_{A \in \omega_{v-1} : A \subset B} A.
	\]
	By definition, $U^+(B,v), U^{-}(B,v) \in \mathcal{F}_{0,v-1}$. They are the approximations of $B$ from above and below by $v$-cylinders.
	
	\subsection{The case where the orbit of  $\zeta$ hits the interior of the inducing set}

	The main goal of this subsection is to prove the following:

	\begin{prop} \label{Dicho_hits}
		Theorem \ref{Dicho_Mis} holds for every $\zeta \in [1-a,1]\setminus A$ such that $\mathcal{O}(\zeta) \cap \mathring{A}\neq \emptyset$.
	\end{prop}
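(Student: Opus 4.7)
The plan is to apply Corollary~\ref{when_hits_the_reference_set} to reduce the convergence of the REPP $N_n$ under $(T,\mu)$ to the convergence of the induced REPP $N^A_n$ under $(T_A,\mu_A)$ with target sets being the shadows $B'_n$. The first hypothesis of that corollary is exactly Lemma~\ref{Arriving_fast_enough}, while the continuity hypothesis on $\mathcal{O}(\zeta)$ holds since $T$ is polynomial and under the present assumption $\mathcal{O}(\zeta)\cap\mathring{A}\neq\emptyset$ the orbit $\mathcal{O}(\zeta)$ cannot lie on the periodic part of the post-critical set (that part is disjoint from $A$ by property~\eqref{Induced_is_Markov}); in particular only cases~(i) and~(ii) of Theorem~\ref{Dicho_Mis} are relevant here.

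After this reduction, I must prove that $\mu_A(B'_n)\Phi^A_{B'_n}$ converges in distribution under $\mu_A$ to a homogeneous Poisson process in case~(i) and to a compound Poisson process $N_{\theta,\mathrm{Geo}(\theta)}$ with $\theta=1-|(T^p)'(\zeta)|^{-1}$ in case~(ii). The induced system $(A,T_A,\mu_A)$ is Rychlik with exponential $\phi$-mixing (Remark~\ref{Induced_is_Rychlik}, property~\eqref{phi-mixing}), but the targets $B'_n=\bigcup_{\alpha\in\Pre}B'_n(\alpha)$ furnished by Lemma~\ref{rewrite_B'_n} are not balls around a single point. I would approximate $B'_n$ from above and below by unions of $v_n$-cylinders $U^{\pm}(B'_n,v_n)$ with $v_n\to\infty$ chosen so that, via Lemma~\ref{cyl_mixing}, the symmetric difference has $\mu$-mass of lower order than $\mu(B'_n)$. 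The $\phi$-mixing estimate for cylinders then reduces the convergence of $N^A_n$ to a standard short-return/long-return decomposition in the spirit of \cite{HP14,KY21,AFV15}.

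The extremal index $\theta$ is identified by a direct clustering analysis. In case~(i), since $\zeta$ is non-periodic, the pieces $B'_n(\alpha)$ indexed by distinct $\alpha\in\Pre$ are mutually far apart under $T_A$ and, individually, have no short returns under $T_A$; therefore $\theta=1$ and the limit is the standard homogeneous Poisson process. In case~(ii), let $q\geq 1$ be minimal with $T^q(\zeta)\in\mathring{A}$; then $T^q(\zeta)\in A$ is periodic for $T_A$ and the dominant cluster in the shadow comes from the pre-image $\alpha_\star\in\Pre$ sitting just before $T^q(\zeta)$ in the tower, while the other pieces $B'_n(\alpha)$ are separated from it in the induced dynamics. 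Since $\zeta\notin\mathcal{O}(0)\setminus\{0\}$ (because $\mathcal{O}(\zeta)$ hits $\mathring{A}$ but $\mathcal{O}(0)\setminus\{0\}$ does not), Corollary~\ref{leb_equiv_mu} gives $\mu\asymp\leb$ near $\zeta$, so the mean value theorem yields $\mu(B_n\cap T^{-p}B_n)/\mu(B_n)\to |(T^p)'(\zeta)|^{-1}$, whence $\theta=1-|(T^p)'(\zeta)|^{-1}$ and the multiplicity distribution of the compound Poisson limit is geometric with parameter $\theta$.

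The main obstacle will be controlling the tail of the decomposition of $B'_n$, namely showing that pre-images $\alpha\in\Pre_k$ with $k$ large contribute negligibly to the point process; this is where the summability of $k(j)\,\leb(I_j)$ (property~\eqref{control_large_return_time}) and the exponential decay of $\mu(\Lambda_n)$ (property~\eqref{Exponential_decay_k(j)}) must be combined with the cylinder approximation so that only a uniform finite part of the shadow is relevant and the standard $\phi$-mixing Rychlik machinery applies.
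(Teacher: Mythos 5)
Your proposal follows essentially the same route as the paper: reduce to the shadow sets via Corollary~\ref{when_hits_the_reference_set} and Lemma~\ref{Arriving_fast_enough}, approximate $B'_n$ by cylinder unions $U^{\pm}(B'_n,v_n)$ controlled through Lemma~\ref{cyl_mixing} (using that each $B'_n(\alpha)$ sits in a single branch $I_{j(\alpha)}$), verify the short-/long-range dependence conditions ($\DD_q(u_n)^*$, $\DD'_q(u_n)^*$ of \cite{FFM18}) by exponential $\phi$-mixing of the Rychlik induced map, and identify $\theta=1-|(T^p)'(\zeta)|^{-1}$, noting correctly that case~(iii) cannot occur since $\mathcal{O}(0)\setminus\{0\}$ avoids $A$. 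The only slight looseness is that $\mu\asymp\leb$ alone gives two-sided bounds rather than the exact limit for $\theta$; one should invoke the regularity of the density at $\zeta$ (available from property~\eqref{density_Mis}), exactly as the paper does implicitly.
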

	
	We note that the case studied in this subsection could be covered with standard already available methods, but for the sake of completeness, to illustrate the application of our approach and to pave the way for the following sections, we do it carefully. 
	
	Also, recall that, as observed in Remark~\ref{can_take_small}, by construction of the inducing base, the proposition above does not cover the points in $\mathcal{O}(0)\backslash \{0\}$.
	
	The idea is to use Corollary~\ref{when_hits_the_reference_set}. Since $\mathcal{O}(\zeta) \cap \mathring{A}$, let $q$ be $\min \{n\in \mathbb{N}^* : T^n(\zeta)\in A\}$. 
	We have $B_n := \{\varphi >u_n\} = (\zeta-r_n, \zeta+r_n)$ for some $r_n \to 0$. By continuity, we can assume that $n$ is large enough so that $T^q(B_n) \subset A$. Recall that $B'_n \subset A$ is our shadowing set. The conclusion of Lemma 
	\ref{Arriving_fast_enough} is enough to apply Theorem \ref{when_hits_the_reference_set}. Thus, the convergence  of the REPP for $B_n$ follows from the convergence of the REPP for $B'_n$ under the induced map, which means that the proof of 
	Proposition~\ref{Dicho_hits} is then reduced to the proof of the following result.
	\begin{prop}
		\label{prop:B_n-prime-REPP-convergence}
		Let $\zeta$ be as in Proposition~\ref{Dicho_hits} and consider the sequence $(B_n)_n$ defined as in Theorem~\ref{Dicho_Mis}.  Then the convergence in \eqref{eq:Bn-prime-convergence} holds for the respective shadowing sequence $(B_n')_n$ defined through \eqref{eq:B-prime-definition}.
	\end{prop}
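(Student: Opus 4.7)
The plan is to establish REPP convergence for the Rychlik induced system $(A, T_A, \mu_A)$ on the family of shadowing targets $(B'_n)_n$, by adapting the $\phi$-mixing REPP convergence machinery (as developed in \cite{AFV15, FFT12, FFTV16, HP14, KY21}) to accommodate the fact that $B'_n$ is not a single ball but a countable disjoint union of shrinking intervals.

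First I would exploit Lemma~\ref{rewrite_B'_n} to decompose $B'_n = \bigsqcup_{\alpha \in \Pre} B'_n(\alpha)$ and identify the $T_A$-periodic structure of $\Pre$. The key observation is that $\alpha \in \Pre$ is $T_A$-periodic if and only if $\alpha \in \mathcal{O}(\zeta) \cap A$: indeed, $T_A^k(\alpha)=\alpha$ forces $T^N(\alpha) = \alpha$ for some $N$, and combined with $T^m(\alpha) = \zeta$ yields $T^N(\zeta) = \zeta$. Thus in case (i) no $\alpha$ is $T_A$-periodic, while in cases (ii) and (iii) the $T_A$-periodic preimages constitute a single $T_A$-orbit $\{\alpha_1^*, \ldots, \alpha_s^*\}$ of period $s = \#(\mathcal{O}(\zeta) \cap A)$, and by the chain rule applied cyclically around the orbit $(T_A^s)'(\alpha_i^*) = (T^p)'(\zeta)$ for each $i$.

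Next I would approximate $B'_n$ from above and below by cylinder unions $U^{\pm}(B'_n, v_n)$ with depth $v_n \to \infty$ growing slowly, bounding the gap via Lemma~\ref{cyl_mixing} and Corollary~\ref{leb_equiv_mu}. Exponential $\phi$-mixing from property~\eqref{phi-mixing} then allows decorrelation of indicator products across blocks separated by $v_n$ and verification of the standard short-return and decorrelation conditions (of $\Pi_q(u_n)/\Pi'_q(u_n)$-type), forcing Poisson convergence in case (i) and compound Poisson convergence with Geometric$(\theta)$ multiplicity in cases (ii) and (iii). The extremal index is read off from the local dynamics around the periodic orbit: since $\rho_a$ is locally bounded and positive near a non-critical $\alpha_i^*$ (case (ii)), the standard escape-ratio computation on the annulus $\alpha_i^* + B'_n(\alpha_i^*) \cap T_A^{-s}(B'_n(\alpha_i^*))^{\mathrm{c}}$ yields $\theta = 1 - |(T^p)'(\zeta)|^{-1}$; whereas at a critical-orbit point $\alpha_i^*$ the $|x - \alpha_i^*|^{-1/2}$ singularity of $\rho_a$ from property~\eqref{density_Mis} modifies the measure-to-length ratio and produces $\theta = 1 - |(T^p)'(\zeta)|^{-1/2}$ in case (iii).

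The hard part will be handling the non-ball geometry of $B'_n$: it is a countable disjoint union of shrinking intervals accumulating on the (possibly dense) preimage set $\Pre$. To cope with this, I would use properties~\eqref{control_large_return_time} and \eqref{Exponential_decay_k(j)} to introduce a cutoff $K_n \to \infty$ and show that the mass $\mu_A\bigl(\bigsqcup_{k \geq K_n}\bigsqcup_{\alpha \in \Pre_k} B'_n(\alpha)\bigr)$ is negligible relative to $\mu_A(B'_n)$, thereby truncating the analysis to finitely many branches for each $n$. Only the (unique if any) $T_A$-periodic orbit inside $\Pre$ contributes to the clustering, while all other branches feed the Poisson background; the reconciliation of these two contributions into a single compound Poisson limit is the most delicate step and will require careful control of the conditional measures $\mu_A(B'_n(\alpha))$ across all branches using the bounded distortion property~\eqref{Bounded_distortion_outside_A}.
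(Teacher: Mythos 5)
Your skeleton coincides with the paper's proof: decompose $B'_n$ via Lemma~\ref{rewrite_B'_n}, approximate by the cylinder unions $U^{\pm}(B'_n,v_n)$ using Lemma~\ref{cyl_mixing}, verify the dependence conditions $\DD_q(u_n)^*$ and $\DD'_q(u_n)^*$ (with $q=0$ for non-periodic $\zeta$ and $q=r=\operatorname{Card}(\mathcal{O}(\zeta)\cap A)$ for periodic $\zeta$) through exponential $\phi$-mixing, conclude with \cite[Theorem~2.A]{FFM18}, and read off $1-\theta=|(T^p)'(\zeta)|^{-1}=|(T_A^r)'(T^q\zeta)|^{-1}$ by the chain rule. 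But two points need correction. First, your case (iii) does not belong to this proposition: by property \eqref{Induced_is_Markov} and Remark~\ref{can_take_small}, $\mathcal{O}(0)\setminus\{0\}$ never meets $A$, so a periodic point of the critical orbit has $\mathcal{O}(\zeta)\cap A=\emptyset$ and falls under Proposition~\ref{Dicho_does_not_hit}. Moreover, your mechanism for the exponent $1/2$ is wrong: the singularities of $\rho_a$ sit at the critical values $c_k$, which all lie outside $A$, and on $A$ the density is comparable to Lebesgue (Corollary~\ref{leb_equiv_mu}), so there is no $|x-\alpha_i^*|^{-1/2}$ singularity at an induced periodic point; in the paper the $1/2$ comes from the ratio $\mu(Q(B_n))/\mu(B_n)$ computed at $\zeta$ itself, and requires the cluster reconstruction of Section~\ref{reconstruction_cluster} because in that case the induced map misses the intra-cluster returns. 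A minor related slip: only the last point of $\mathcal{O}(\zeta)\cap A$ visited before the orbit returns to $\zeta$ belongs to $\Pre$ (the other entry points have intermediate visits to $A$), so the $T_A$-periodic orbit is not contained in $\Pre$, only one of its points is.

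Second, the truncation cutoff $K_n$ is unnecessary here, and the step you defer is precisely where the proof must be carried out. Since $T^q(B_n)\subset A$ for large $n$, each $B'_n(\alpha)$ with $\alpha\in\Pre_k$ is contained in a single branch $I_{j(\alpha)}$ with $k(j(\alpha))=k+q$, and these branches are pairwise disjoint; hence by Lemma~\ref{cyl_mixing} the cylinder-approximation errors sum over all countably many branches to $2C\lambda^{v_n}\le\rho_n\mu(B'_n)$ once $\ln n=o(v_n)$ (this is Lemma~\ref{approx_cyl_hits}), with no cutoff. More importantly, the ``reconciliation of the periodic clustering with the Poisson background'' is not a separate delicate step to be handled branch by branch: it is exactly what checking $\DD_q(u_n)^*$ and $\DD'_q(u_n)^*$ for the whole set $U_n^+$, with the annulus $Q_{q,0}^{\kappa}(U_n^+)$, and applying \cite[Theorem~2.A]{FFM18} accomplishes. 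What that requires concretely is the geometric bound $\mu\big(Q_{q,0}^{\kappa}(U_n^+)\big)\le C\theta(1-\theta)^{\kappa}\mu(U_n^+)$ (regularity of $\mu$ on $A$ plus the chain rule along the induced periodic orbit) and the divergence of the shortest return time $R_n$ of the annulus to $U_n^+$ (continuity when $q=0$, Hartman--Grobman when $q>0$). Your write-up names this as ``the most delicate step'' but supplies no argument for it, so it stops short of a proof exactly at the estimates that the paper actually performs.
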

	
	Our strategy to prove this result is to show first that the convergence in \eqref{eq:Bn-prime-convergence} for $B'_n$ can be obtained from establishing the same convergence for an approximating union of cylinders such as $U^+(B'_n,v_n)$ and  $U^{-}(B'_n,v_n)$, for some well chosen sequence $(v_n)_n$ of integers. 	Then, using the fact that the induced map is $\phi$-mixing, we show that a sequence such as  $U^+(B'_n,v_n)$ and  $U^{-}(B'_n,v_n)$ satisfies the dependence conditions, which give the convergence of the REPP associated to them. 	
	
	\begin{lem} \label{approx_cyl_hits}
		Let $(v_n)_n$ be a sequence of positive integres such that $\ln(n) = o(v_n)$. Then, 
		\begin{equation*}
			\mu\left(U^+(B'_n,v_n)\backslash U^-(B'_n,v_n)\right) \leq \rho_n \mu(B'_n),
		\end{equation*}
		with $\rho_n \to 0$. In particular, the convergence of the REPP counting the number of hits to $B'_n$ is equivalent to the convergence of the REPP counting the number of hits to  $U^+(B'_n,v_n)$ (or $U^-(B'_n,v_n)$) and the limits are the same.
	\end{lem}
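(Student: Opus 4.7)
The set $U^+(B'_n,v_n)\setminus U^-(B'_n,v_n)$ is the union of those $v_n$-cylinders of $T_A$ that straddle the boundary $\partial B'_n$ inside $A$, so each such cylinder must contain at least one endpoint of some constituent interval $B'_n(\alpha)$ from the decomposition $B'_n=\bigsqcup_{\alpha\in\Pre}B'_n(\alpha)$ provided by Lemma~\ref{rewrite_B'_n}. My strategy is to split $\Pre=\Pre_{\leq M_n}\sqcup\Pre_{>M_n}$ at a cutoff $M_n$ to be chosen and estimate the two contributions separately.

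For $\alpha\in\Pre_{>M_n}$, the whole interval $B'_n(\alpha)$ is contained in $\{\RT_A>M_n\}$. A key point is that $\RT_A$ is constant on every element of $\omega_{v_n-1}$, since it is determined by the first coordinate of the cylinder; consequently any $v_n$-cylinder meeting $\{\RT_A>M_n\}=\Lambda_{M_n+1}$ is wholly contained in this set. So the contribution of the tail is bounded by $\mu(\Lambda_{M_n+1})\leq C\eta^{M_n+1}$, via property~\eqref{Exponential_decay_k(j)}. For the head $\Pre_{\leq M_n}$, each $\alpha$ contributes at most two boundary points, each lying in a single $v_n$-cylinder of mass at most $C\lambda^{v_n}$ by Lemma~\ref{cyl_mixing}. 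Since $T$ is at most two-to-one, the polynomial equation $T^k(x)=\zeta$ has at most $2^k$ solutions and so $|\Pre_{\leq M_n}|\leq 2^{M_n+1}$, giving a head contribution of at most $C'\,2^{M_n}\lambda^{v_n}$.

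Combining the two estimates and recalling that $\mu(B'_n)=\mu(B_n)\sim\tau/n$ (Corollary~\ref{when_hits_the_reference_set} together with the normalisation $n\mu(B_n)\to\tau$), it suffices to choose, for instance, $M_n=\lceil 2\ln n/\ln\eta^{-1}\rceil$, which makes $n\eta^{M_n}\to 0$ and keeps $2^{M_n}$ polynomial in $n$; the hypothesis $\ln n=o(v_n)$ then forces $n\,2^{M_n}\lambda^{v_n}\to 0$, yielding $\rho_n\to 0$ as required. The main technical point is keeping track of boundaries through the possibly infinitely many preimage branches of $\zeta$; the quadratic, two-to-one character of $T$ is exactly what tames this count at the head, while the exponential tail bound on $\Lambda_n$ handles the high-return-time branches uniformly.

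For the second assertion, the inclusions $U^-(B'_n,v_n)\subset B'_n\subset U^+(B'_n,v_n)$ combined with the measure bound just established give $\mu(U^{\pm}(B'_n,v_n))/\mu(B'_n)\to 1$, so the normalisations of the three REPP are asymptotically equivalent. A standard sandwich argument, bounding the expected number of visits to the symmetric difference over a time window of length $T/\mu(B'_n)$ by $T\rho_n\to 0$, then shows that the three point processes share the same distributional limit whenever one exists.
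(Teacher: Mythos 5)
Your argument is correct, but the mechanism you use to control the sum over the infinitely many branches of $B'_n$ differs from the paper's. The paper also starts from the disjoint decomposition $B'_n=\bigcup_{\alpha\in\Pre}B'_n(\alpha)$ of Lemma~\ref{rewrite_B'_n} and bounds, for each $\alpha$, the at most two straddling $v_n$-cylinders; but it exploits the full strength of Lemma~\ref{cyl_mixing}, namely the bound $\mu(E)\leq C\mu(I_{j(\alpha)})\lambda^{v_n}$ for a $v_n$-cylinder $E$ with first coordinate $I_{j(\alpha)}$, together with the observation that $B'_n(\alpha)\subset I_{j(\alpha)}$ with the host $1$-cylinders $I_{j(\alpha)}$ pairwise disjoint. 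Summing the weights $\mu(I_{j(\alpha)})\leq 1$ then gives the uniform bound $\mu(U^+(B'_n,v_n)\setminus U^-(B'_n,v_n))\leq 2C\lambda^{v_n}$ in one stroke, and the comparison with $\mu(B'_n)=\mu(B_n)\sim\tau/n$ under $\ln n=o(v_n)$ finishes the proof. You instead discard the weight $\mu([a_0])$, which forces a cutoff $M_n$: for the head you count preimages via the degree of the quadratic map ($|\Pre_{\leq M_n}|\leq 2^{M_n+1}$), and for the tail you absorb all relevant cylinders into $\Lambda_{M_n+1}$ (using that $\RT_A$ is constant on $1$-cylinders, hence on $v_n$-cylinders) and invoke the exponential decay of property~\eqref{Exponential_decay_k(j)}; your choice $M_n\asymp\ln n$ then makes $n(\eta^{M_n}+2^{M_n}\lambda^{v_n})\to0$, which is what is needed. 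Both routes are valid; the paper's buys brevity and structural generality (no finite-degree hypothesis on $T$, no use of the $\Lambda_m$ estimate, no parameter tuning), whereas yours only needs the crude cylinder bound $\mu(E)\leq C\lambda^{v_n}$ at the price of leaning on the two-to-one character of the quadratic family and on property~\eqref{Exponential_decay_k(j)}. Your treatment of the second assertion (asymptotically equal normalisations plus a first-moment bound on visits to the symmetric difference over a window of length of order $1/\mu(B'_n)$) is the same as the paper's.
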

	
	\begin{proof}
		By Lemma \ref{rewrite_B'_n}, we have
		\[B'_n = \bigcup_{k\geq 1}\; \bigcup_{\alpha \in \Pre_k} B'_n(\alpha)\]
		and the union is disjoint. Thus, we can look at each $\alpha$ independently. For $\alpha \in \Pre_k$ and since $T^q\zeta \in \mathring{A}$, we choose $n$ large enough so that $T^q(B_n) \subset A$. Hence, $B'_n(\alpha) \subset I_{j(\alpha)}$, where $j(\alpha)$ is such that $k(j(\alpha)) = k + q$ (note that $I_{j(\alpha)}$ are also disjoint). Thus, by Lemma \ref{cyl_mixing},
		\[\mu \left(U^+(B'_n(\alpha),v_n)\backslash U^-(B'_n(\alpha),v_n)\right) \leq 2C\mu(I_{j(\alpha)})\lambda^{v_n}.\]
		Then,
		\begin{align*}
			\mu\left(U^+(B'_n,v_n)\backslash U^-(B'_n,v_n)\right) & \leq \sum_{k\geq 1}\sum_{\alpha \in \Pre_k} \left(U^+(B'_n(\alpha),v_n)\backslash U^-(B'_n(\alpha),v_n)\right) \\
			& \leq \sum_{k\geq 1}\sum_{\alpha \in \Pre_k} 2C\mu(I_{j(\alpha)})\lambda^{v_n} \\
			& \leq 2C\lambda^{v_n}\;\; \text{since the $I_{j(\alpha)}$ are disjoint}\\
			& \leq \rho_n \mu(B'_n).
		\end{align*}
		In the last line, we use the fact that $n\mu(B_n) \to \tau >0$ by hypothesis and $\mu(B'_n) = \mu(B_n)$ for $n$ large enough. The condition $\rho_n \to 0$ holds since we imposed $\ln(n) = o(v_n)$.
		
		The fact that the convergence of the respective REPP is equivalent follows from the observation:
		\begin{align*}
			\mu\left(\mathcal N_{U^+(B'_n(\alpha),v_n)}(nJ)-\mathcal N_{B'_n(\alpha)}(nJ)>0\right)&\leq n|J|\mu\left(U^+(B'_n(\alpha),v_n)\setminus B'_n(\alpha)\right)\\
			&\leq n|J|\rho_n\mu(B'_n)\xrightarrow[n\to\infty]{}0,
		\end{align*}
		where we used again the facts that $n\mu(B_n) \to \tau >0$ and $\mu(B'_n) = \mu(B_n)$ for $n$ large.    
	\end{proof}
	
	\begin{proof}[Proof of Proposition~\ref{prop:B_n-prime-REPP-convergence}]
		On account of Lemma~\ref{approx_cyl_hits}, we are left to prove the convergence of the REPP for $U^{+}(B'_n,v_n)$ (or $U^{-}(B'_n,v_n)$). For that end, we use \cite[Theorem~2.A]{FFM18} after showing that the dependence conditions $
		\DD_q(u_n)^*$ and $\DD'_q(u_n)^*$ hold. We remark that when $\zeta$ is a non-periodic point, we will show that $\DD_q(u_n)^*$ and $\DD'_q(u_n)^*$ hold, with $q = 0$ while if $\zeta$ is a periodic point of period $s$, we will show these 
		conditions hold with $q = r := \text{Card}(\mathcal{O}(\zeta) \cap A)$. 
		
		In order to ease the notation we set $U_n^+:=U^+(B'_n,v_n)$. 
		
		\textbf{Condition $\DD_q(u_n)^*$.} We say that $\DD_q(u_n)^*$ holds if for any integers $t, \kappa_1, \dots, \kappa_p$ and any $J = \cup_{i = 2}^p I_j \in \mathcal{R}$ with $\inf J \geq t$,
		\begin{align*}
			& E_n(\kappa_1) \\
			&:= \left|\mu\left(Q_{q,0}^{\kappa_1}(U_n^+) \cap \left(\bigcap_{j=2}^p\mathcal{N}_{U_n^+}(I_j) = \kappa_j\right)\right) - \mu \left(Q_{q,0}^{\kappa_1}(u_n)\right)\mu\left( \bigcap_{j=2}^p\mathcal{N}_{U_n^+}(I_j) = 
			\kappa_j\right)\right|\\ &\leq \gamma(n,t)
		\end{align*}
		where for each $n$, $\gamma(n,t)$ is decreasing in $t$, and $\lim_{n\to\infty}n\gamma(n,t_n)=0$ for some sequence $t_n = o(n)$, where the annulus $Q_{q,0}^{\kappa_1}(U_n^+)$, in the particular cases we are handling here, can be written 
		as:
		\begin{align*}
		& Q_{q,0}^{\kappa_1}(U_n^+)  = \bigcap_{j = 0}^{\kappa_1}T_A^{-jr}(U_n^+) \cap T_A^{-(\kappa_1 + 1)r}((U_n^+)^c) \quad \text{if}\quad q=r>0\\
		& Q_{q,0}^{\kappa_1}(U_n^+)=U_n^+ \quad \text{if} \quad q=0. 
		\end{align*}
		When $q>0$, note that $E_n(\kappa) \leq 2\mu \left(Q_{q,0}^{\kappa}(U_n^+)\right) \leq 2C\theta(1-\theta)^{\kappa}\mu(U_n^+)$, since $\mu$ is regular on the set considered (in fact, we have $1 - \theta = |(T^s)'(\zeta)|^{-1} = |
		(T_A^r)'(T^q(\zeta))|^{-1}$). So we choose $\kappa(n)$ so that $n (1 - \theta)^{\kappa(n)} \to 0$. Hence, we only have to consider $\kappa_1 < \kappa(n)$. Observe that 
		\[
		Q_{q,0}^{\kappa_1}(U_n^+) \in 	\mathcal{F}_{0,(\kappa_1+1)r +v_n} \subset \mathcal{F}_{0,\kappa(n)r+v_n}.
		\]
		Now, by \eqref{phi-mixing}, for some $0<\lambda < 1$ and for $\kappa_1 \leq \kappa(n)$ we have,
		\begin{align*}
			E_n(\kappa_1) \leq C\lambda^{t - r\kappa(n) - v_n}.
		\end{align*}
		Hence, we can take $\gamma(n,t) := \max \left\{C\lambda^{t - r\kappa(n) - v_n}, 2C\theta(1-\theta)^{\kappa(n)}\right\}$. When $q=0$, we only need to consider  $\gamma(n,t) := C\lambda^{t - v_n}$.
		We consider the most complicated of the cases, in which we have 
		\[
		n\gamma(n,t_n) \leq n2C\theta(1-\theta)^{\kappa(n)} + nC\lambda^{t - r\kappa_(n) - v_n}.
		\]
		Recall that, $v_n$ needs to be such that $\ln(n) = o(v_n)$, $\kappa(n)$ such that $n(1 - \theta)^{\kappa(n)}\to 0$. So it is possible to find $t_n = o(n)$ and appropriate $v_n, \kappa(n)$ so that $n\gamma(n,t_n) \to 0$.

		\textbf{Condition $\DD_q'(u_n)$.} For some fixed $q\in\N_0$, consider the sequence $(t_n)_{n\in\N}$, given by condition  $\DD(u_n)^*$ and let $(\ell_n)_{n\in\N}$ be another sequence of integers such that
		\begin{equation}
			\label{eq:kn-sequence}
			\ell_n\to\infty\quad \mbox{and}\quad  \frac{n}{\ell_n} t_n = o(n).
		\end{equation}
		
		We say that condition $\DD'_q(u_n)^*$
		holds if there exists a sequence $(k_n)_{n\in\N}$ satisfying \eqref{eq:kn-sequence} and such that
		\begin{equation*}
			\label{eq:D'rho-un}
			\lim_{n\rightarrow\infty}\,n\sum_{j=q+1}^{\ell_n-1}\mu_A\left( Q_{q,0}^{0}(U_n^+)\cap T_A^{-j}\left(U_n^+\right)
			\right)=0.
		\end{equation*}
		Let 
		\[
		R_n:=\inf\{\RT_{U_n^+}^A(x)\colon x\in Q_{q,0}^{0}(U_n^+)\}.
		\]
		Using again that the induced map is $\phi$-mixing with exponential tails of rate $0<\lambda<1$, we have
		\begin{align*}
		& n\sum_{j = 1}^{\ell_n - 1} \mu_A\left( Q_{q,0}^{0}(U_n^+)\cap T_A^{-j}\left(U_n^+\right)\right) \\
		& \leq n\ell_n \mu_A(Q_{q,0}^{0}(U_n^+))\mu_A(U_n^+) + n\mu_A(Q_{q,0}^{0}(U_n^+)) \sum_{j = R_n}^{+\infty} \lambda^j
		\end{align*}     
		Recalling that $\lim_{n\to\infty}n\mu(B_n)=\tau\geq0$ and observing that, since $T_A(U_n^+)$ is an interval, $\lim_{n\to\infty}R_n=\infty$ either by continuity when $q=0$ and by the Hartman-Grobman Theorem when $q>0$, then the term on 
		the right of the last displayed equation vanishes as $n\to\infty$, which proves $\DD'_q(u_n)^*$.     
		As we said, condition $\DD_q(u_n)^*$ and $\DD'_q(u_n)^*$ are sufficient to show the convergence of the REPP for $U_n^+$ and thus for $B'_n$ (by Lemma~\ref{approx_cyl_hits}) and hence for $B_n$ (by 
		Corollary~\ref{when_hits_the_reference_set}), concluding the proof in this case. Furthermore, we saw that if $\zeta$ is periodic of prime period $s$, we have $\theta = 1 -  |(T^s)'(\zeta)|^{-1}$.    
	\end{proof}
	
	\subsection{The case where the orbit of $\zeta$ does not hit the inducing set} 
	
	\noindent The main goal of this subsection is to prove the following:
	
	\begin{prop} \label{Dicho_does_not_hit}
		Theorem \ref{Dicho_Mis} holds for every $\zeta \in [1-a,1]$ such that $\mathcal{O}(\zeta) \cap A = \emptyset$.
	\end{prop}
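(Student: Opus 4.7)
The strategy follows the same three-step template as in Proposition \ref{Dicho_hits}, but with Corollary \ref{when_hits_the_reference_set} replaced by Corollary \ref{when_does_not_hit}. First I verify the two hypotheses of Corollary \ref{when_does_not_hit} for the sequence $(B_n)_n$. Hypothesis (1), $\mu(B'_n) r_{Q(B_n)} \to 0$ under $\mu_{B'_n}$, follows directly from Lemma \ref{Arriving_fast_enough}, since $r_{Q(B_n)} \leq r_{B_n} + s$. For hypothesis (2), I use that, for $n$ sufficiently large, $T^j(B_n) \cap A = \emptyset$ for all $j \in \{1,\dots,s\}$ by continuity of $T$ along $\mathcal{O}(\zeta)$ together with the hypothesis $\mathcal{O}(\zeta) \cap A = \emptyset$. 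Combined with the fact that from a point of $Q(B_n)$ any return to $B_n$ requires an excursion of length at least $s+1$ avoiding $A$, and with Corollary \ref{leb_equiv_mu} and the exponential decay in property \eqref{Exponential_decay_k(j)}, one bounds $\mu_{Q(B_n)}(r_{B_n}<r_A)$ by a quantity tending to $0$.

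Once these hypotheses are in place, the problem reduces to establishing the convergence of $\mu_A(B'_n)\Phi^A_{B'_n}$. Here I follow the same cylinder approximation argument as in the proof of Proposition \ref{prop:B_n-prime-REPP-convergence}: using Lemma \ref{rewrite_B'_n}, decompose $B'_n = \bigsqcup_{k \geq 1}\bigsqcup_{\alpha \in \Pre_k} B'_n(\alpha)$, with each $B'_n(\alpha)$ contained in a single branch $I_{j(\alpha)}$ of the induced Markov map; apply Lemma \ref{cyl_mixing} to obtain $\mu(U^+(B'_n,v_n)\setminus U^-(B'_n,v_n)) = O(\lambda^{v_n}) = o(\mu(B'_n))$ once $\ln n = o(v_n)$; and invoke the $\phi$-mixing property \eqref{phi-mixing} of the induced Rychlik system to check the dependence conditions $\DD_q(u_n)^*$ and $\DD'_q(u_n)^*$ of \cite[Theorem 2.A]{FFM18}. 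The crucial observation is that, since $\mathcal{O}(\zeta) \cap A = \emptyset$, the induced map experiences no clustering (i.e. $q=0$ in all three cases), so the induced REPP converges to a homogeneous Poisson process, regardless of whether $\zeta$ is periodic for $T$ or not.

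Corollary \ref{when_does_not_hit} then transfers this to convergence of $\mu(Q(B_n))\Phi_{Q(B_n)}$. For non-periodic $\zeta$, $Q(B_n)=B_n$ and part (i) of Theorem \ref{Dicho_Mis} follows. For periodic $\zeta$ of prime period $s$, what we have just obtained is Poisson convergence for the escape annulus, which represents one point per cluster; to recover the REPP for $B_n$ itself and produce the compound Poisson limit with geometric multiplicities, one must perform the cluster reconstruction of Section \ref{reconstruction_cluster}. This reconstruction uses the local action of $T^s$ near $\zeta$: when $\zeta \notin \mathcal{O}(0)$ the density is bounded and bounded away from $0$ near $\zeta$, so $\mu(B_n \cap T^{-s}B_n)/\mu(B_n) \to |(T^s)'(\zeta)|^{-1}$, giving $\theta = 1 - |(T^s)'(\zeta)|^{-1}$; when $\zeta \in \mathcal{O}(0)$, the singular term $1/\sqrt{|x-c_k|}$ in formula \eqref{density_Mis} dominates and the same ratio tends instead to $|(T^s)'(\zeta)|^{-1/2}$, yielding $\theta = 1 - |(T^s)'(\zeta)|^{-1/2}$. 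In both cases the induced geometric structure of the successive nested preimages $B_n \cap T^{-s}B_n \cap \dots \cap T^{-ks}B_n$ produces the geometric multiplicity distribution $\pi(k)=\theta(1-\theta)^{k-1}$.

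The main obstacle is precisely this reconstruction: since $\mathcal{O}(\zeta) \cap A = \emptyset$, the induced map is genuinely blind to the cluster structure around $\zeta$, so both the value of $\theta$ and the geometric law of the cluster sizes must be extracted by local arguments on $T^s$ and on the invariant density, rather than being read off from the induced dynamics. The case $\zeta \in \mathcal{O}(0)$ is the most delicate, as it requires the explicit asymptotics in \eqref{density_Mis} (and careful treatment of the sign indicators $s_{k-1}$) to correctly identify the exponent $1/2$ in the extremal index. The remaining (boundary) case $\mathcal{O}(\zeta)\cap \partial A \neq \emptyset$ is handled, as indicated in Remark \ref{can_take_small}, by shrinking the inducing base to make the orbit of $\zeta$ miss $A$ entirely, reducing it to the case treated above.
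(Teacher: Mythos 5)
Your overall route coincides with the paper's: verify the hypotheses of Corollary \ref{when_does_not_hit}, prove Poisson convergence of the induced REPP for $B'_n$ by cylinder approximation and $\phi$-mixing with $q=0$ (the induced system sees no clustering), transfer this to the escape annuli $Q(B_n)$ as in Proposition \ref{convergence_for_Q}, compute $\theta$ separately for $\zeta\notin\mathcal O(0)$ and $\zeta\in\mathcal O(0)$ via the density formula, and recover the compound Poisson limit through the cluster reconstruction of Section \ref{reconstruction_cluster}; the boundary case is indeed dispatched by Remark \ref{can_take_small}.

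There is, however, one step that fails as written: the cylinder approximation. You claim each $B'_n(\alpha)$ is contained in a single branch $I_{j(\alpha)}$ and then reuse the argument of Lemma \ref{approx_cyl_hits}. That containment holds only in the other case, where $T^q\zeta\in\mathring A$ forces $T^q(B_n)\subset A$. Here $\mathcal O(\zeta)\cap A=\emptyset$, so every $\alpha\in\Pre$ satisfies $T^j(\alpha)\notin A$ for all $j\ge 1$: the induced map is undefined at $\alpha$, which is an accumulation point of branches $I_j$ with $k(j)\to\infty$, and the interval $B'_n(\alpha)$ therefore meets infinitely many branches. The bound $\mu\bigl(U^+(B'_n(\alpha),v_n)\setminus U^-(B'_n(\alpha),v_n)\bigr)\le 2C\mu(I_{j(\alpha)})\lambda^{v_n}$ and the subsequent summation over disjoint $I_{j(\alpha)}$ then make no sense. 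The paper replaces this by Lemma \ref{comparison_1cyl}: using $d(\mathcal O(\zeta),A)>\gamma$ and the bounded distortion of iterates outside $A$ (property \eqref{Bounded_distortion_outside_A}), with a separate quadratic estimate when $\alpha=0$, one first shows $\mu\bigl(U^{\pm}(B'_n(\alpha),1)\bigr)\asymp\mu(B'_n(\alpha))$, and only then applies Lemma \ref{cyl_mixing} to the at most two boundary $1$-cylinders to obtain $\mu\bigl(U^+(B'_n,v_n)\setminus U^-(B'_n,v_n)\bigr)\le C\lambda^{v_n}\mu(B'_n)$. Some such comparison is indispensable here. Two smaller imprecisions: the inequality $\RT_{Q(B_n)}\le\RT_{B_n}+s$ is false in general (after first hitting $B_n$ the orbit may linger in the cluster for arbitrarily many periods before reaching $Q(B_n)$); one should instead rerun the proof of Lemma \ref{Arriving_fast_enough} for $Q(B_n)$, which works since $Q(B_n)'=B'_n$. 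And for hypothesis (2) the paper's actual estimate pushes $Q(B_n)$ forward by some $T^{k_n}$ onto a set of Lebesgue measure at least $\delta$ and uses bounded distortion to get $\leb_{Q(B_n)}(\RT_{B_n}\le\RT_A)\le K\delta^{-1}\leb(\RT_{B_n}\le\RT_A)\to 0$, before converting $\leb$ into $\mu$ (via the explicit density when $\zeta\in\mathcal O(0)$); your sketch names plausible ingredients but does not produce this bound.
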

	
	When studying REPP in the presence of clustering created by observables maximised at periodic points, we usually observe a limiting compound Poisson process (see \cite{FFT13}, for example), which could be described as having two components: the first is the time occurrences of the clusters, which is ruled by a homogeneous Poisson process, and the second is a Geometric multiplicity distribution, which describes the number of visits to $B_n$ during the same cluster. We observe that an entrance in the annuli $Q(B_n)$ marks to the last hit to $B_n$ within a cluster and, therefore, the point process of entrances in $Q(B_n)$ gives us the time occurrences of clusters.

	One of the main difficulties in this case, where the orbit of $\zeta$ does not hit the inducing set, is that the induced map may miss some of the intra cluster hits to $B_n$, since the orbits may return to $B_n$ without going through the base of the induced map. Hence, we split the analysis by considering first the cluster positions and later, in Section~\ref{reconstruction_cluster},  we reconstruct the point process of hits to $B_n$ from the point process of hits to $Q(B_n)$.

	\begin{prop} \label{convergence_for_Q}
		Let $N'_n$ be the REPP for $Q(B_n)$ renormalized by $\mu(B_n)$ that is to say
		\begin{equation*}
			N'_n(J) = \sum_{i\, \in \,\mu(B_n)^{-1}\!J\,\cap\, \mathbb{N}} \mathds{1}_{Q(B_n)}(T^ix). 
		\end{equation*}
		Then, 
		\begin{equation*}
			N'_n \xrightarrow[n \to +\infty]{\mu} N_{\theta},
		\end{equation*}
		where $N_{\theta}$ a standard Poisson process of intensity $\theta = \lim_{n\to +\infty} \mu(Q(B_n))/\mu(B_n)$.
	\end{prop}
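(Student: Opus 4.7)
I would reduce to the induced dynamics via Corollary \ref{when_does_not_hit}, establish the REPP convergence in the induced system by the cylinder approximation used in the previous subsection, and finally adjust for the rescaling by $\mu(B_n)$ rather than $\mu(Q(B_n))$ to recover the intensity $\theta$.

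\textbf{Step 1 (Reduction via the shadow set).} I first verify the two hypotheses of Corollary \ref{when_does_not_hit} applied to the sequence $(Q(B_n))_n$. Hypothesis (1), $\mu(B'_n)\,r_{Q(B_n)}\xrightarrow[]{\mu_{B'_n}}0$, follows exactly as in Lemma \ref{Arriving_fast_enough}: from $x\in B'_n$ one reaches $B_n$ within $r_A(x)\leq k(j)$ iterates on the branch $I_j\ni x$ and $Q(B_n)$ within at most $s$ further iterates, so the exponential tail of $\mu(\Lambda_m)$ from property \eqref{Exponential_decay_k(j)} suffices. Hypothesis (2), $\mu_{Q(B_n)}(r_{B_n}<r_A)\to 0$, exploits the assumption $\mathcal{O}(\zeta)\cap A=\emptyset$: there exists $\delta>0$ with $d(\mathcal{O}(\zeta),A)>\delta$, and from a point in $Q(B_n)$ the orbit shadows $\mathcal{O}(\zeta)$ for only $O(\log r_n^{-1})$ iterates (being pushed away at rate $|(T^s)'(\zeta)|$ in the periodic case, or simply by separation of distinct iterates of $\zeta$ in the non-periodic case), during which, by the very definition of $Q(B_n)$, it cannot re-enter $B_n$ and it remains $\delta$-far from $A$; after escaping, the orbit meets $A$ in $O(1)$ iterates while hitting $B_n$ would cost $O(1/\mu(B_n))$, yielding $\mu_{Q(B_n)}(r_{B_n}<r_A)=O(\mu(B_n))\to 0$.

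\textbf{Step 2 (Induced REPP convergence and rescaling).} Corollary \ref{when_does_not_hit} then reduces the problem to showing $\mu_A(B'_n)\Phi^A_{B'_n}\Rightarrow N_1$, a Poisson process of intensity $1$ under $\mu_A$. I would proceed exactly as in Proposition \ref{prop:B_n-prime-REPP-convergence}: approximate $B'_n$ by unions of $v_n$-cylinders from above and below with $\log n=o(v_n)$, as in Lemma \ref{approx_cyl_hits} (whose proof transfers verbatim, the decomposition of Lemma \ref{rewrite_B'_n} being even cleaner here since $T^k(B'_n(\alpha))=B_n$ exactly for every $\alpha\in\Pre_k$), and then verify the mixing conditions $\DD_0(u_n)^*$ and $\DD'_0(u_n)^*$ with $q=0$, noting that no clustering occurs in the induced system because, given $\mathcal{O}(\zeta)\cap A=\emptyset$, the accumulation points of $B'_n$ in $\Pre$ are not periodic for $T_A$ (in fact $T_A$ fails to be defined at them). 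Both mixing conditions follow from the exponential $\phi$-mixing \eqref{phi-mixing} together with the divergence of the induced self-return time $R_n:=\inf\{r^A_{U_n^+}(x):x\in U_n^+\}\to\infty$, a consequence of $T_A$ expanding each $B'_n(\alpha)$ to a macroscopic interval. This yields $\mu(Q(B_n))\,\mathcal{N}_{Q(B_n)}\Rightarrow N_1$, and since $N'_n$ rescales time by $\mu(B_n)$ instead of $\mu(Q(B_n))$, the ratio $\mu(Q(B_n))/\mu(B_n)\to\theta$ scales the limiting intensity by $\theta$, giving $N'_n\Rightarrow N_\theta$.

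\textbf{Main obstacle.} The delicate ingredient is Hypothesis (2) of Corollary \ref{when_does_not_hit}: quantitatively controlling how orbits escape the vicinity of $\mathcal{O}(\zeta)$ before returning to $B_n$. This is where the periodic versus non-periodic dichotomy enters substantively, as in the periodic case one must invoke the expansion at $\zeta$ through a Hartman--Grobman-type linearization (analogously to its use in the "hits" case), whereas the remaining ingredients are direct adaptations of the techniques already deployed for Proposition \ref{prop:B_n-prime-REPP-convergence}.
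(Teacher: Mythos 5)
Your overall strategy coincides with the paper's: verify the two hypotheses of Corollary~\ref{when_does_not_hit} for $(Q(B_n))_n$, prove Poissonian convergence for the induced REPP of $B'_n$ by cylinder approximation and $\phi$-mixing, and recover the intensity $\theta$ from the rescaling $\mu(Q(B_n))/\mu(B_n)\to\theta$. However, two of your steps have genuine gaps.

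First, your verification of hypothesis (2) rests on the claim that, after escaping the vicinity of $\mathcal{O}(\zeta)$, ``the orbit meets $A$ in $O(1)$ iterates.'' This is unjustified and false in general: the escaping orbit can land anywhere, in particular near other points whose orbits avoid $A$ for arbitrarily long times, so there is no uniform bound on $r_A$ after escape. The paper's argument is different and is the step you are missing: one chooses $k_n$ so that $T^{k_n}(Q(B_n))$ has Lebesgue measure at least some fixed $\delta>0$ while $T^j(B_n)\cap A=\emptyset$ for $j<k_n$ (Hartman--Grobman in the periodic case), then invokes the uniform distortion bound for iterates staying outside $A$ (property~\eqref{Bounded_distortion_outside_A}) to get
$\leb_{Q(B_n)}\bigl(\{r_{B_n}\leq r_A\}\bigr)\leq K\delta^{-1}\leb\bigl(\{r_{B_n}\leq r_A\}\bigr)\to 0$,
and finally transfers this from $\leb$ to $\mu$ — which requires a separate argument when $\zeta\in\mathcal{O}(0)$, where the density has a $|x-\zeta|^{-1/2}$ singularity and $\mu\asymp\leb$ fails locally; you do not address this case at all.

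Second, your claim that Lemma~\ref{approx_cyl_hits} ``transfers verbatim'' is incorrect. That lemma crucially uses that each component $B'_n(\alpha)$ is contained in a single branch domain $I_{j(\alpha)}$, which holds when $T^q(B_n)\subset A$. In the present case $\mathcal{O}(\zeta)\cap A=\emptyset$, the points of $\Pre$ are accumulation points of $1$-cylinders with return times tending to infinity (the induced map is not even defined there), so each $B'_n(\alpha)$ straddles infinitely many $1$-cylinders and the estimate $\mu(U^+\setminus U^-)\leq C\lambda^{v_n}\mu(B'_n)$ requires the separate comparison $\mu\bigl(U^{\pm}(B'_n(\alpha),1)\bigr)\asymp\mu\bigl(B'_n(\alpha)\bigr)$ of Lemma~\ref{comparison_1cyl}, itself a nontrivial bounded-distortion argument using $d(\mathcal{O}(\zeta),A)>\gamma$ and a special treatment of $\alpha=0$. (A minor further imprecision: after entering $B_n$ the orbit reaches $Q(B_n)$ after $s$ times the cluster length, not ``at most $s$ further iterates''; the lag is still negligible, but not for the reason you give.)
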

	
	\begin{rem}
		If $\zeta$ is non-periodic, $Q(B_n) = B_n$, by construction, meaning that $N_n'=N_n$, in this case.
	\end{rem}
	
	\begin{proof}
		Again, the first condition of Corollary~\ref{when_does_not_hit} follows from  Lemma \ref{Arriving_fast_enough}. We need to prove the second condition to be able to use Corollary~\ref{when_does_not_hit}. \\
		
		Since $B_n$ are balls centred at $\zeta$ and $\mathcal{O}(\zeta) \cap A = \emptyset$, there exists $\delta > 0$ (that may depend on $\zeta$ but not on $n$) such that for $n$ large enough, there exist $k_n\in \mathbb{N}$ with $\leb( T^{k_n}Q(B_n)) \geq \delta$ and 
		$T^{j}(B_n) \cap A = \emptyset$ for $j\in \{0,\dots,k_n-1\}$ (if $\zeta$ is a periodic point or a pre-periodic point, the assertion follows from the application of Grobman-Hartman Theorem; if $\zeta$ is not periodic nor pre-periodic, we can consider an inducing set $A'$ 
		such that $d(\mathcal{O}(\zeta), A') \geq \delta' > 0$ and take $\delta = \delta'$).
		Since $B_n$ is an interval, $T^{k_n}_{|B_n}$ has bounded distortion, given by a constant $K$ that does not depend on $n$ (by \ref{Bounded_distortion_outside_A}). Thus,
		\begin{align*}
			\leb \left(Q(B_n) \cap \{\RT_{B_n} \leq \RT_A\}\right) & \leq K \frac{\leb \left( T^{k_n}\left(Q(B_n)\cap \{\RT_{B_n} \leq \RT_A\}\right)\right)}{\leb(T^{k_n}Q(B_n))}\leb(Q(B_n)) \\
			\leb_{Q(B_n)}\left(  Q(B_n)\cap \{\RT_{B_n} \leq \RT_A\}  \right) &\leq K\delta^{-1}\leb(\{\RT_{B_n}\leq \RT_A\}).
		\end{align*}
		Since $\mu(\RT_{B_n} \leq \RT_A) \xrightarrow[n\to +\infty]{} 0$ and $\mu$ and $\leb$ are equivalent, we have 
		\[
		\leb_{Q(B_n)}\left(  Q(B_n)\cap \{\RT_{B_n} \leq \RT_A\}  \right)\xrightarrow[n\to +\infty]{} 0.
		\] 
		It just remains to show that it implies $\mu_{Q(B_n)}\left(  Q(B_n)\cap \{\RT_{B_n} \leq \RT_A\}  \right)\xrightarrow[n\to +\infty]{} 0$. If $\zeta \notin \mathcal{O}(0)$, then $
		\mu \asymp \leb$, it is immediate. \\
		If $\zeta \in \mathcal{O}(0)$, we use the fact that we know the form of the density and the singularities are exactly of the form $1/\sqrt{|x - \zeta|}$ and thus we also have
		\[
		\mu_{Q(B_n)}\left(  Q(B_n)\cap \{\RT_{B_n} \leq \RT_A\}  \right)\xrightarrow[n\to +\infty]{} 0.
		\] 
		The proof is finished.
		\end{proof}

		So, we can apply Corollary~ \ref{when_does_not_hit} and the convergence of the REPP counting entrances in $B'_n$ for the induced map to obtain the convergence of the REPP counting entrances in $Q(B_n)$. After, we will only need to rebuild the compound 
		process from the REPP counting entrances in $Q(B_n)$. 
		In this case, we just want to show the convergence to a standard Poisson process for the shadowing set $B'_n$ under the induced transformation $T_A$. We recall that $T_A$ is known to be Markov 
		for parameters $a \in \Mis$. However, it is not possible to immediately prove the conditions $\DD_0(u_n)^*$ and $\DD'_0(u_n)$ using the standard proof because $B'_n$ is not a ball around some point $\zeta' \in A$. However, due to its definition, it still has a form that 		we can characterise. Indeed, it consists of the union (at most countable and not necessarily disjoint) of intervals around the preimage of $\zeta$ in $A$, whose orbit does not hit $A$ before arriving at $\zeta$. These intervals are not centred on the preimages but 	
		almost (the difference is only due to the fact that the derivative is not constant but since it is continuous the difference is small).

		\begin{lem} \label{comparison_1cyl}
			Let $\zeta$ be such that  $d(\mathcal{O}(\zeta), A) > \gamma$, for some $\gamma > 0$, and $B_n$ a sequence of shrinking balls to $\zeta$. Then, for every $k\in \mathbb{N}, \; \alpha \in \Pre_k$, we have
			\begin{align*}
				\mu(B'_n(\alpha)) \geq c\mu \left(U^+(B'_n(\alpha),1)\right) \\
				\mu \left(U^-(B'_n(\alpha),1)\right) \geq c\mu(B'_n(\alpha)),
			\end{align*}
			for some $c>0$.
			\end{lem}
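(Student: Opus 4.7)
The plan is to pass from $\mu$ to Lebesgue measure using Corollary~\ref{leb_equiv_mu} (which applies because $B'_n(\alpha)$ and the surrounding $1$-cylinders all lie in $A$, and $A$ avoids $\mathcal{O}(0)\setminus\{0\}$ by property~\ref{Induced_is_Markov}), and then to transport everything by $T^k$ into the image space, exploiting the bounded distortion of $T^k$ on the monotone branch $J_k(\alpha)$ supplied by Lemma~\ref{rewrite_B'_n}. Writing $\beta_\pm$ for the two endpoints of $B'_n(\alpha)$ and $I_\pm\in\omega_0$ for the $1$-cylinders containing them, one has the disjoint decomposition
\[
B'_n(\alpha)=U^-(B'_n(\alpha),1)\sqcup\bigl(I_+\cap B'_n(\alpha)\bigr)\sqcup\bigl(I_-\cap B'_n(\alpha)\bigr),
\]
and $U^+(B'_n(\alpha),1)=U^-(B'_n(\alpha),1)\cup I_+\cup I_-$, so both inequalities of the lemma follow from a single estimate of the form $\leb(I_\pm\cap B'_n(\alpha))+\leb(I_\pm)\leq\delta\,\leb(B'_n(\alpha))$ with $\delta<1$ small enough.

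I first verify that $I_\pm\subset J_k(\alpha)$. On $I_\pm$ the return time $r_A$ is constantly equal to $k+r_A(\zeta\pm r_n)>k$, whereas on $\partial J_k(\alpha)$ any point satisfies $r_A\leq k-1$ (its orbit either hits the critical point $0\in\mathring A$ or $\partial A$ at some time $j<k$). Since $r_A$ is locally constant along $I_\pm$, the interval $I_\pm$ cannot cross $\partial J_k(\alpha)$. Property~\ref{Bounded_distortion_outside_A}, applied to $T^{k-1}$ on $T(J_k(\alpha))$, then supplies a uniform distortion constant $\tilde K$ for $T^k$ on $J_k(\alpha)\setminus\{0\}$, and therefore
\[
\frac{\leb(I_\pm\cap B'_n(\alpha))}{\leb(B'_n(\alpha))}\;\leq\;\tilde K\,\frac{\leb\bigl(T^k(I_\pm)\cap B_n\bigr)}{\leb(B_n)},
\]
with the same bound for $\leb(I_\pm)/\leb(B'_n(\alpha))$ in terms of $\leb(T^k(I_\pm))/\leb(B_n)$.

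The heart of the proof is the bound on $\leb(T^k(I_\pm))$, the length of the maximal interval around $\zeta\pm r_n$ on which $r_A$ takes its constant value $m_\pm:=r_A(\zeta\pm r_n)$. Since $T^{m_\pm}(T^k(I_\pm))\subset A$, one has $\leb(T^k(I_\pm))\leq|A|/\inf|(T^{m_\pm})'|$. The separation hypothesis $d(\mathcal{O}(\zeta),A)>\gamma$ forces the orbit of $\zeta\pm r_n$ to shadow $\mathcal{O}(\zeta)$ until its deviation from it reaches $\gamma/2$, whence $m_\pm\gtrsim\log(\gamma/r_n)$; combined with the uniform hyperbolicity of Misiurewicz--Thurston maps outside a small neighbourhood of the critical point (available after shrinking $A$ by Remark~\ref{can_take_small}), this yields $|(T^{m_\pm})'|\gtrsim\gamma/r_n$, so that $\leb(T^k(I_\pm))\leq C|A|r_n/\gamma$. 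Taking the diameter of $A$ small enough relative to $\gamma$ makes $\tilde K\,C|A|/\gamma$ as small as we wish, and the two Lebesgue inequalities corresponding to the statement follow directly from the decomposition above; Corollary~\ref{leb_equiv_mu} then transfers them to $\mu$. The exceptional case $\alpha=0$, which can occur only when $\zeta\in\mathcal{O}(0)$, is handled by running the same argument separately on each of the two monotone half-branches $J_k(0)^{\pm}$ from Lemma~\ref{rewrite_B'_n}, with the explicit singular density formula in property~\ref{density_Mis} replacing Corollary~\ref{leb_equiv_mu} at the final step. The main obstacle is precisely the combined shadowing-plus-expansion estimate in the last paragraph, which is where both the Misiurewicz structure and the distance hypothesis $d(\mathcal{O}(\zeta),A)>\gamma$ enter crucially.
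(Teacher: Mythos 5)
Your overall architecture is sound and close in spirit to the paper's: both inequalities reduce to showing that the two boundary $1$-cylinders $I_\pm$ occupy a controlled fraction of $B'_n(\alpha)$, and the separation $d(\mathcal{O}(\zeta),A)>\gamma$ is exploited at the moment these cylinders return to $A$ (the paper runs one bounded-distortion estimate for the iterate $T^{m}$ on $T\bigl(U^+(B'_n(\alpha),1)\bigr)$, where $m$ is the first time the whole $U^+$ meets $A$, using that its image must stretch from $T^{m-k}\zeta$ across the gap of width $\gamma$ onto $A$). However, the step you yourself single out as the heart of the proof has a genuine gap. From $m_\pm\geq \log(\gamma/r_n)/\log\sup|T'|$ and the Ma\~n\'e-type uniform expansion $|(T^{m_\pm})'|\geq C\lambda_0^{\,m_\pm}$ (with some $\lambda_0>1$, valid along orbits avoiding a neighbourhood of the critical point) you only obtain
\[
|(T^{m_\pm})'|\;\geq\;C\,(\gamma/r_n)^{\sigma},\qquad \sigma:=\frac{\log\lambda_0}{\log\sup|T'|},
\]
and $\sigma<1$ in general, since the uniform expansion rate $\lambda_0$ of a Misiurewicz--Thurston map is strictly smaller than $\sup|T'|=2a$. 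This yields only $\leb(T^k(I_\pm))\lesssim |A|\,(r_n/\gamma)^{\sigma}$, which is \emph{not} $O(r_n)=O(\leb(B_n))$; after pulling back by the distortion of $T^k$ the excess $\leb(I_\pm)/\leb(B'_n(\alpha))$ is then of order $(r_n/\gamma)^{\sigma-1}\to\infty$, and the lemma does not follow.

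The correct mechanism --- and the one the paper uses --- is a mean-value/bounded-distortion comparison rather than a Lyapunov-exponent count: $T^{m_\pm}$ maps $T^k(I_\pm)$ onto $A$, while the adjacent interval reaching back to $\zeta$ (of length at most $\leb(T^k(I_\pm))+r_n$) is mapped onto an interval spanning the gap of length at least $\gamma$ between $T^{m_\pm}\zeta$ and $A$; bounded distortion of $T^{m_\pm}$ on this union, whose iterates avoid $A$ up to time $m_\pm-1$, gives $\leb(T^k(I_\pm))\leq \tfrac{K|A|}{\gamma}\bigl(\leb(T^k(I_\pm))+r_n\bigr)$ and hence $\leb(T^k(I_\pm))\lesssim |A|\,r_n/\gamma$. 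Two lesser issues: property~\ref{Bounded_distortion_outside_A} cannot be invoked for $T^{k-1}$ on the full branch $T(J_k(\alpha))$, whose intermediate iterates may well visit $A$; it should be applied on $T\bigl(U^+(B'_n(\alpha),1)\bigr)$, whose iterates avoid $A$ up to time $k$ precisely because its constituent $1$-cylinders have return time exceeding $k$. Finally, your second inequality does require $\delta<1$ and hence the smallness of $|A|/\gamma$; this is legitimate via Remark~\ref{can_take_small}, but it is an adjustment of the inducing base that should be made explicit (the paper's own written argument shares this weakness for the $U^-$ bound).
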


		\begin{proof}
			Note that $B'_n(\alpha)$ is an interval and thus $U^+(B'_n(\alpha), 1) \backslash U^-(B'_n(\alpha),1)$ consists of at most two intervals which are located at the extremities of $B_n'(\alpha)$. Set 
			\[
			m := \min \{p\geq 1: T^p\left(U^+(B'_n(\alpha), 1)\right)\cap A \neq \emptyset\}
			\] (note that $m > k$ for $n$ large enough by definition and Lemma \ref{rewrite_B'_n}). Since, $T_{|T(B'_n(\alpha))}^{m-1}$ is monotone (as it does not cover $0\in A$), 
			$T^m\left(U^+(B'_n(\alpha), 1) \backslash U^-(B'_n(\alpha),1)\right)$ is still composed of at most two intervals that lie at the extremities of $T^m\left(U^+(B'_n(\alpha),1)\right)$. Since $U^{+}(B'_n(\alpha),1)$ is composed by 1-cylinders, 
			$T^m\left(U^+(B'_n(\alpha), 1\right) \cap A \neq \emptyset$, we have $A \subset T^m\left(U^+(B'_n(\alpha), 1\right)$. But since $T^{m-k}\zeta \in T^m(B'_n(\alpha))$, then we must have that the interval between $T^{m-k}\zeta$ and the 
			closest element of $\partial A$ is contained in $T^m\left(B'_n(\alpha)\backslash U^+(B'_n(\alpha),1)\right)$. Thus, $\left|T^p(B'_n(\alpha))\right|\geq \gamma$. \\
			Then, again by bounded distortion (Property~\eqref{Bounded_distortion_outside_A}) 
			applied to $T^{m-1}_{|T(U^+(B'_n(\alpha),1))}$, we have
			\begin{align} \label{univ_distortion_comp_1cyl}
				|T(U^+(B'_n(\alpha),1))| &\leq K|T(B'_n(\alpha))|\frac{|T^m(U^+(B'_n(\alpha),1))|}{|T^m(B'_n(\alpha))|}\leq \frac{2K}{\gamma} |T(B'_n(\alpha))|. 
			\end{align}
			We have to analyse two different cases. If $\zeta \cap \mathcal{O}(0) = \emptyset$, for $n$ large enough so that $B_n \cap \mathcal{O}(0) = \emptyset$, 
			we have $|T(U^+(B'_n(\alpha),1))| \geq c' \mu\left(T(U^+(B'_n(\alpha),1))\right) \geq cc'\mu\left(U^+(B'_n(\alpha),1)\right)$, using that $\mu \asymp \leb$ outside the critical orbit. Now, since
			 $|T(B'_n(\alpha))| \leq C|B'_n(\alpha)| \leq CC'\mu(B'_n(\alpha))$, 
			we have $\mu(B'_n(\alpha)) \geq  c \mu\left(U^+(B'_n(\alpha),1)\right)$ for $c > 0$  not depending on $k$. 
			
			In the special case where $\alpha = 0$, we do not have that $|T(U^+(B'_n(0),1))| \geq c' \mu\left(T(U^+(B'_n(0),1))\right)$, anymore. However, by definition of the Misiurewicz map, we have $|T(U^+(B'_n(0), 1))| = a(|U^+(B'_n(0),1)|/2)^2$ 
			and identically $|T(B'_n(0))| = a(|B'_n(0)|/2)^2$. Thus, by \eqref{univ_distortion_comp_1cyl},
			\begin{align*}
				a(|U^+(B'_n(0),1)|/2)^2 &\leq Ca(|B'_n(0)|/2)^2 \\
				|U^+(B'_n(0),1)| &\leq C'|B'_n(0)|.
			\end{align*}
			Now, since both lie inside $A$ and $\mu \asymp \leb$ on $A$, then for a certain $c>0$,  we have $c\mu(U^+(B'_n(0),1)) \leq \mu(B'_n(0))$.
		\end{proof}
		
		\begin{lem}
			Let $(B'_n)$ be as above. Then for every sequence $(v_n)$ diverging to $+\infty$, we have 
			\[
			\mu(U^+(B'_n, v_n) \backslash U^-(B'_n, v_n)) \leq C\lambda^{v_n} \mu(B'_n).
			\]
		\end{lem}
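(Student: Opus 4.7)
My approach would parallel the proof of Lemma~\ref{approx_cyl_hits}, treating $B'_n$ as the disjoint union $\bigsqcup_{\alpha\in\Pre}B'_n(\alpha)$ given by Lemma~\ref{rewrite_B'_n} and bounding each piece separately. The essential new difficulty compared to the ``hits'' case is that when $\mathcal O(\zeta)\cap A=\emptyset$, the interval $B'_n(\alpha)$ is no longer contained in a single 1-cylinder $I_{j(\alpha)}$: at preimages $\alpha$ which are accumulated by 1-cylinders with unbounded return time, $B'_n(\alpha)$ can span infinitely many 1-cylinders. The newly established Lemma~\ref{comparison_1cyl} is precisely the replacement for the ``$B'_n(\alpha)\subset I_{j(\alpha)}$'' observation that was available before.

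The first step of the plan is to check the inclusion
\[
U^+(B'_n,v_n)\setminus U^-(B'_n,v_n)\;\subset\;\bigcup_{\alpha\in\Pre}\bigl(U^+(B'_n(\alpha),v_n)\setminus U^-(B'_n(\alpha),v_n)\bigr),
\]
so that a term-by-term estimate suffices. Any $v_n$-cylinder $D$ in the left-hand set meets some $B'_n(\alpha)$ (by disjointness of the $B'_n(\alpha)$, in fact a unique one for $n$ large), but cannot be contained in $B'_n(\alpha)$, otherwise it would lie in $U^-(B'_n,v_n)$, a contradiction.

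The second step is to fix $\alpha$ and exploit that $B'_n(\alpha)$ is an interval, so $U^+(B'_n(\alpha),v_n)\setminus U^-(B'_n(\alpha),v_n)$ consists of at most two boundary $v_n$-cylinders. Each such boundary cylinder lies in a 1-cylinder that appears in $U^+(B'_n(\alpha),1)$, and Lemma~\ref{cyl_mixing} applied individually gives
\[
\mu\bigl(U^+(B'_n(\alpha),v_n)\setminus U^-(B'_n(\alpha),v_n)\bigr)\;\leq\;2\widetilde C\,\lambda^{v_n}\,\mu\bigl(U^+(B'_n(\alpha),1)\bigr).
\]
Lemma~\ref{comparison_1cyl} then converts this into $\leq 2\widetilde C c^{-1}\lambda^{v_n}\mu(B'_n(\alpha))$, and summing over $\alpha\in\Pre$ together with the disjointness of the pieces yields the claimed bound with $C:=2\widetilde C/c$.

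The main obstacle is really the first, containment step: one must be sure that at scale $v_n$ the cylinder neighbourhoods sitting above different $B'_n(\alpha)$'s do not interfere in a way that breaks the decomposition. For $n$ large, distinct $B'_n(\alpha)$'s live inside disjoint collections of 1-cylinders (they are separated by the branches of the induced map defining $\Pre_k$), so their $v_n$-cylinder hulls remain separated and the inclusion above holds. Everything else is an application of Lemmas~\ref{rewrite_B'_n}, \ref{cyl_mixing} and~\ref{comparison_1cyl}, exactly in the spirit of Lemma~\ref{approx_cyl_hits}.
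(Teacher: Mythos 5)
Your proposal is correct and follows essentially the same route as the paper: decompose $B'_n$ into the disjoint intervals $B'_n(\alpha)$, observe that the symmetric difference of the cylinder hulls at scale $v_n$ reduces to at most two boundary $v_n$-cylinders per $\alpha$, control these via Lemma~\ref{cyl_mixing}, and convert $\mu(U^+(B'_n(\alpha),1))$ into $\mu(B'_n(\alpha))$ using Lemma~\ref{comparison_1cyl} before summing over $\alpha$. The only cosmetic difference is that the paper first bounds $\mu(U^+(B'_n(\alpha),1)\setminus U^-(B'_n(\alpha),1))$ by $(c^{-1}-c)\mu(B'_n(\alpha))$ and then applies Lemma~\ref{cyl_mixing} to the two boundary $1$-cylinders, whereas you bound directly by $\mu(U^+(B'_n(\alpha),1))\leq c^{-1}\mu(B'_n(\alpha))$; both yield the same estimate.
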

		
		\begin{proof}
			Let $N$ be such that, for all $n>N$,  all the conditions of the previous lemmas are satisfied.  Consider also that  $n$ is large enough so that $T^p(B_n) \subset B_N$. We have that 
			\[
			B'_n = \bigcup_{k \geq 1} \bigcup_{\alpha \in \Pre_k} B'_n(\alpha)
			\]
			and this union is disjoint. Hence
			\[ 
			\bigcup_{k \geq 1} \bigcup_{\alpha \in \\Pre_k} U^-(B'_n(\alpha),v_n) \subset U^{-}(B'_n, v_n) \subset U^{+}(B'_n, v_n) \subset  \bigcup_{k \geq 1} \bigcup_{\alpha \in \Pre_k} U^+(B'_n(\alpha),v_n)
			\]
			leading to 
			\[
			\mu(U^+(B'_n, v_n) \backslash U^-(B'_n, v_n)) \leq \sum_{k\geq 1}\sum_{\alpha\in \Pre_k} \mu(U^+(B'_n(\alpha), v_n) \backslash U^-(B'_n(\alpha), v_n)).
			\]
			Hence, we can treat each $B'_n(\alpha)$ independently. So fix $k\geq 1$ and $\alpha \in \Pre_k$. Since $B'_n(\alpha)$ is an interval, for every $q \geq 1$, there are at most two cylinders of $\omega_{q-1}$ such that $A\cap B'_n(\alpha) 
			\neq \emptyset$ and $A \not\subset B'_n$.  Hence, using the previous lemma
			\begin{align*}
				\mu(U^+(B'_n(\alpha),1) \backslash U^-(B'_n(\alpha), 1)) \leq (c^{-1} - c)\mu(B'_n(\alpha)).
			\end{align*}
			But $U^+(B'_n(\alpha),1) \backslash U^-(B'_n(\alpha), 1)$ consists of at most two disjoint cylinders $C_1, C_2 \in \omega_0$. Thus, by Lemma \ref{cyl_mixing}, 
			\[
			\mu(U^+(B'_n(\alpha),v_n) \backslash U^-(B'_n(\alpha), v_n)) \leq C\lambda^{v_n}\mu(U^+(B'_n(\alpha),1) \backslash U^-(B'_n(\alpha), 1)).
			\]
			This gives
			\[
			\mu(U^+(B'_n(\alpha),v_n) \backslash U^-(B'_n(\alpha), v_n)) \leq C\lambda^{v_n}\mu(B'_n(\alpha)).
			\]
			Now, we can sum for every $k\geq 1$ and $\alpha \in \Pre_k$ to get
			\[ 
			\mu(U^+(B'_n, v_n) \backslash U^-(B'_n, v_n)) \leq C\lambda^{v_n}\mu(B'_n).
			\]
		\end{proof}
		
		Similarly to what we did earlier (see Lemma~\ref{approx_cyl_hits}), by the last lemma, it is equivalent to have the convergence of the REPP counting the number of hits to $U^+(B'_n,v_n)$, $U^-(B'_n, v_n)$ or $B'_n$, as long as $(v_n)$ is 
		chosen as a diverging sequence. By definition, $U^+(B'_n, v_n)$ and $U^-(B'_n, v_n)$ are in $\mathcal{F}_{0,v_n-1}$. \\
		
		Now, we will show the convergence to a standard Poisson process of the REPP counting the number of hits to  $U^+(B'_n,v_n)$, which implies the convergence of the REPP counting the number of hits to $B'_n$. For that purpose, we show 
		that conditions $\DD_0(u_n)^*$ and $\DD'_0(u_n)$ are satisfied for the induced map.
		
		\begin{lem}
			Let $\tau > 0$ and $(u_n)$ a sequence of thresholds such that $n\mu_A(B'_n) \xrightarrow[n\to +\infty]{} \tau$. Then, conditions  $\DD_0(u_n)^*$ and $\DD'_0(u_n)$ hold. This means that the REPP $N^A_n$ converges in distribution to 
			$N$ a standard homogeneous Poisson Process with intensity 1.
		\end{lem}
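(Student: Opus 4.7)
The plan is to verify the dependence conditions $\DD_0(u_n)^*$ and $\DD'_0(u_n)$ for the induced REPP associated to the cylinder approximation $U_n^+ := U^+(B'_n, v_n)$, and then apply \cite[Theorem~2.A]{FFM18} with $q=0$ to conclude that $N^A_n$ converges to a standard homogeneous Poisson process of intensity~$1$; the comparison between $U_n^+$ and $B'_n$ is already handled by the preceding lemma. The key structural fact is that $U_n^+$ is a union of $(v_n-1)$-cylinders of the Rychlik-Markov partition, so $U_n^+ \in \mathcal{F}_{0, v_n-1}$, which is precisely the regime where property~\eqref{phi-mixing} applies directly without any further approximation.

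Verifying $\DD_0(u_n)^*$ is standard: since $q=0$ gives $Q^0_{0,0}(U_n^+) = U_n^+$, applying~\eqref{phi-mixing} with $J = U_n^+$ (so $k = v_n-1$) and $D$ the future event $\bigcap_{j=2}^p\{\mathcal{N}_{U_n^+}(I_j) = \kappa_j\}$ supported on times $\geq t$ yields $\gamma(n,t) \leq C\lambda^{t-v_n+1}$. Choosing $v_n = \lfloor(\log n)^2\rfloor$ (so that $\log n = o(v_n)$ and $v_n = o(n)$) and $t_n = 2 v_n$ gives $t_n = o(n)$ and $n\gamma(n,t_n) \to 0$, as required. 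For the long-time part of $\DD'_0(u_n)$, the same mixing yields $\mu_A(U_n^+ \cap T_A^{-j}U_n^+) \leq \mu_A(U_n^+)^2 + C\lambda^{j-v_n+1}\mu_A(U_n^+)$ for $j \geq v_n$, whose sum times $n$ is at most $n\ell_n\mu_A(U_n^+)^2 + n\mu_A(U_n^+)\cdot O(\lambda^{v_n})$, and this vanishes because $n\mu_A(U_n^+) \to \tau$, $\ell_n = o(n)$, and $v_n \to \infty$ (using, if needed, the sharper exponential decay of correlations enjoyed by the induced Rychlik system to absorb the constant of the mixing tail).

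The main obstacle is the short-time contribution $1 \leq j < v_n$ in $\DD'_0(u_n)$. Unlike the case in Proposition~\ref{Dicho_hits}, where continuity of $T_A$ at a single $\zeta \in \mathring{A}$ (or Hartman-Grobman around a periodic $\zeta$) automatically forces $U_n^+ \cap T_A^{-j}U_n^+ = \emptyset$ for small $j$, here $U_n^+$ accumulates on the countable set $\Pre \cap A$ of preimages of $\zeta$ in $A$, at each of which $T_A$ is undefined (since $\mathcal{O}(\zeta)\cap A = \emptyset$ forces $r_A(\alpha) = +\infty$ for every $\alpha \in \Pre$). My plan to overcome this is to decompose $U_n^+$ into its constituent $(v_n-1)$-cylinders $C$ and, using the bounded distortion of $T_A^j$ restricted to $C$ (property~\eqref{Bounded_distortion_outside_A}) together with the cylinder-measure estimate of Lemma~\ref{cyl_mixing}, to obtain a bound of the form $\mu_A(C \cap T_A^{-j}U_n^+) \leq C'\mu_A(C)\mu_A(U_n^+)$ uniformly in $C \subset U_n^+$ and in $j < v_n$; summing over $C \subset U_n^+$ then bounds the short-time sum by $C'v_n\mu_A(U_n^+)^2$, which multiplied by $n$ vanishes because $v_n\mu_A(U_n^+) = o(1)$.
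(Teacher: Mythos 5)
Your overall architecture (verify $\DD_0(u_n)^*$ and $\DD'_0(u_n)$ for the cylinder approximation $U_n^+=U^+(B'_n,v_n)$, apply \cite[Theorem~2.A]{FFM18}, and transfer back to $B'_n$ via the preceding approximation lemma) is the same as the paper's, and your treatment of $\DD_0(u_n)^*$ and of the long-lag part of $\DD'_0(u_n)$ via the exponential $\phi$-mixing of the induced Rychlik map is fine. The gap is exactly in the step you flag as the main obstacle, the short lags $1\leq j< v_n$. Your claimed bound $\mu_A\big(C\cap T_A^{-j}U_n^+\big)\leq C'\mu_A(C)\mu_A(U_n^+)$, for $C$ a constituent $v_n$-cylinder of $U_n^+$, does not follow from bounded distortion of $T_A^j|_C$ together with Lemma~\ref{cyl_mixing}: distortion only gives
\begin{equation*}
\mu_A\big(C\cap T_A^{-j}U_n^+\big)\;\leq\; K\,\mu_A(C)\,\frac{\mu_A\big(T_A^j(C)\cap U_n^+\big)}{\mu_A\big(T_A^j(C)\big)},
\end{equation*}
and for $j<v_n$ the image $T_A^j(C)$ is only a $(v_n-j)$-cylinder, not all of $A$, so the last ratio need not be comparable to $\mu_A(U_n^+)$; it can be of order $1$ (for instance when $T_A^j(C)\subset U_n^+$, which cannot be excluded a priori since $T_A^j(C)$ has measure $O(\lambda^{v_n-j})$, far smaller than $\mu_A(U_n^+)\sim \tau/n$). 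In fact, if such a uniform product bound held for all unions of cylinders and all lags, condition $\DD'$ would be essentially automatic for any Gibbs--Markov base, which is false: take $U_n^+$ a single $v_n$-cylinder around a $T_A$-periodic point of period $j$; then $C\cap T_A^{-j}C$ is a $(v_n+j)$-cylinder of measure comparable to $\mu_A(C)$ up to a factor depending on $j$ only, not on $\mu_A(U_n^+)$. So the inequality you need is precisely the short-return estimate that requires an extra idea, and property \eqref{Bounded_distortion_outside_A} (distortion of the original map off the base) is in any case not the statement that provides distortion for $T_A^j$ on cylinders.

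The paper's proof avoids this by matching the coarseness of the decomposition to the lag: for each $j\leq v_n$ it enlarges the time-zero set to $U^+(B'_n,j)\supset U_n^+$ and decomposes it into $j$-cylinders $E$, for which the Markov full-branch property gives $T_A^j(E)=A$; only then does bounded distortion legitimately yield $\mu_A\big(E\cap T_A^{-j}U_n^+\big)\leq K\mu_A(E)\mu_A(U_n^+)$. Summing gives the bound $K\,\mu_A\big(U^+(B'_n,j)\big)\mu_A(U_n^+)$, and the factor $\mu_A\big(U^+(B'_n,j)\big)\leq\mu_A\big(U^+(B'_n,1)\big)\lesssim\mu_A(B'_n)$ is controlled by Lemma~\ref{comparison_1cyl} --- this is the whole reason that comparison lemma (handling the accumulation of branches at the points of $\Pre$) was established. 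With that modification your computation closes, giving the short-lag contribution $O(v_n/n)\to 0$; without it, the key inequality in your argument is unjustified.
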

		
		\begin{proof}
		We recall that, once conditions  $\DD_0(u_n)^*$ and $\DD'_0(u_n)$ are checked, then the conclusion for the REPP comes from \cite[Theorem~2.A]{FFM18}. 
		We saw that the induced map is Rychlik. Thus, we know that it is exponential $\phi$-mixing (Theorem 1-a) in \cite{AN05}). We will check the conditions for $U^+(B'_n,v_n)$, the reasonning is the same if we consider $U^-(B'_n,v_n)$, 
		instead. We have
			\begin{align*}
				& n\sum_{j = 1}^{\ell_n - 1} \mu_A \left(U^+(B'_n,v_n) \cap T_A^{-j} \left(U^+(B'_n,v_n)\right) \right)\\
				& \leq n \sum_{j = v_n + 1}^{\ell_n - 1} \mu_A \left(U^+(B'_n,1) \cap T_A^{-j} \left(U^+(B'_n,v_n)\right) \right) \\ & \quad\quad + n\sum_{j = 1}^{v_n} \mu_A \left(U^+(B'_n,j) \cap T_A^{-j} \left(U^+(B'_n,v_n)\right) \right) \\
				& \leq n \sum_{j = v_n + 1}^{\ell_n - 1} \mu_A\left(U^+(B'_n,1)\right) \mu_A\left(U^+(B'_n,v_n)\right) \\
				& \quad\quad + n \sum_{j = v_n + 1}^{\ell_n - 1} \phi(j - 1)\mu_A\left(U^+(B'_n,1)\right) \\ &  \quad \quad + n\sum_{j = 1}^{v_n} \mu_A \left(U^+(B'_n,j) \cap T_A^{-j} \left(U^+(B'_n,v_n)\right) \right) \\
				& \leq \frac{n^2 \mu_A\left(U^+(B'_n,1)\right) \mu_A\left(U^+(B'_n,v_n)\right) }{k_n} + n\mu_A\left(U^+(B'_n,1)\right) \sum_{j = v_n}^{+\infty} \phi(j) \\ & \quad \quad + n\sum_{j = 1}^{v_n} \mu_A \left(U^+(B'_n,j) \cap T_A^{-j} 
				\left(U^+(B'_n,v_n)\right) \right).
			\end{align*}
			
			Since $k_n, v_n \xrightarrow[n\to+\infty]{} \infty$ and $(A, T_A, \mu_A)$ is summable $\phi$-mixing (in fact, exponential $\phi$-mixing), the first two terms go to $0$ when $n$ goes to $+\infty$. We are left with estimating the last term. For 
			that purpose, we will use bounded distortion, again. Take $1 \leq j \leq v_n$. Since $U^+(B'_n, j)$ is a union of $j$ cylinders, we write $U^+(B_n, j) = \bigcup_{E \in \mathcal{A}^j : E\subset U^+(B_n, j)} E$. Since the interiors of the
			cylinders are disjoint and $\mu_A$ does not charge any mass point, we have in particular 
			\[\mu_A \left( U^+(B_n, j) \right)  = \sum_{E \in \omega_j : E\subset U^+(B_n, j)} \mu_A(E).\]
			But, by bounded distortion there is a constant $K$ independent of $j$ (depends on $A$ but $A$ is fixed at the beginning), such that for each cylinder $E \in \omega_j$ and since our map is Markovian (which implies that $T_A^j : E \to A$ 
			is onto), we have 
			\[\mu_A\left(E \cap T_A^{-j}(U^+(B_n, v_n))\right) \leq K\mu_A\left(E\right) \mu_A\left(U^+(B_n, v_n)\right).\]
			
			Hence,
			\begin{align*}
				& n\sum_{j = 1}^{v_n} \mu_A \left(U^+(B'_n,j) \cap T_A^{-j} \left(U^+(B'_n,v_n)\right) \right)\\
				& = n\sum_{j = 1}^{v_n}\sum_{E \in \omega_j : E\subset U^+(B_n, j)} \mu_A \left(E \cap T_A^{-j} \left(U^+(B'_n,v_n)\right) \right) \\
				& \leq n \sum_{j = 1}^{v_n}\sum_{E \in \omega_j : E\subset U^+(B_n, j)} K\mu_A(E)\mu_A \left(U^+(B_n, v_n)\right) \\
				& \leq K n  \sum_{j = 1}^{v_n} \mu_A\left(U^+(B_n,j)\right) \mu_A \left(U^+(B_n, v_n)\right) \\
				& \leq KC^2 v_n/n \xrightarrow[n \to +\infty]{} 0.
			\end{align*}
		\end{proof}
		
		\begin{lem}
			If $\zeta$ is a periodic point of period $p$ and $\zeta \notin \mathcal{O}(0)$, then 
			\[
			\theta := \lim_{n\to +\infty} \mu(Q(B_n))/\mu(B_n) = 1 - |(T^p)'(\zeta)|^{-1}.
			\]
			If $\zeta$ is a periodic point of period $p$ and $\zeta \in \mathcal{O}(0)$, 
			\[
			\theta = 1 - |(T^p)'(\zeta)|^{-1/2}.
			\]
		\end{lem}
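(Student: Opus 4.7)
The plan is to compute $\theta$ from the identity
$\theta = \lim_{n\to\infty}\mu(Q(B_n))/\mu(B_n) = 1 - \lim_{n\to\infty}\mu(B_n\cap T^{-p}(B_n))/\mu(B_n)$,
combining a linearisation of $T^p$ at the fixed point $\zeta$ (to control the geometry of $B_n\cap T^{-p}(B_n)$) with the density information of property~\eqref{density_Mis} (to compute the measures of the intervals involved). Since $0$ is strictly preperiodic for Misiurewicz-Thurston maps one has $\zeta\neq 0$, so $T^p$ is $C^\infty$ at the fixed point $\zeta$ with $|(T^p)'(\zeta)|>1$. A first-order Taylor expansion then sandwiches $B_n\cap T^{-p}(B_n)$, for every $\varepsilon>0$ and every $n$ large enough, between two intervals centred at $\zeta$ of half-widths $(1\pm\varepsilon)r_n/|(T^p)'(\zeta)|$, where $r_n$ denotes the radius of $B_n$.

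For the case $\zeta\notin\mathcal{O}(0)$ I will argue that $\rho$ is continuous and positive at $\zeta$: no singular term $1/\sqrt{|x-c_k|}$ of $\rho^{(0)}$ is centred there, and the series $\sum_k C_k^{(0)}/\sqrt{|x-c_k|}$ converges uniformly in a neighbourhood of $\zeta$ thanks to the exponential decay of the coefficients $C_k^{(0)}$ (granted by the Collet--Eckmann growth of $|(T^{k-1})'(c_1)|$ valid for Misiurewicz-Thurston parameters). Consequently, every interval $I$ centred at $\zeta$ with $\leb(I)\to 0$ satisfies $\mu(I) = \rho(\zeta)\,\leb(I)(1+o(1))$. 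Applying this asymptotic to $B_n$ and to the two sandwiching intervals yields $\mu(B_n\cap T^{-p}(B_n))/\mu(B_n)\to |(T^p)'(\zeta)|^{-1}$, which is the first formula.

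For the case $\zeta\in\mathcal{O}(0)$ I will exploit the precise singular description of $\rho^{(0)}$ at $\zeta$. The set $K_\zeta := \{k\geq 1 : c_k = \zeta\}$ is a single arithmetic progression of step $p$ starting at the smallest index $k_0$ with $c_{k_0}=\zeta$, and the chain rule $(T^{k_0+jp-1})'(c_1) = (T^{k_0-1})'(c_1)\cdot ((T^p)'(\zeta))^j$ gives $C_{k_0+jp}^{(0)} = C_{k_0}^{(0)}\,|(T^p)'(\zeta)|^{-j/2}$, so the total singular coefficient $C(\zeta) := 2\sum_{k\in K_\zeta}C_k^{(0)}$ is finite. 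For $I_\delta = (\zeta-\delta,\zeta+\delta)$ with $\delta$ small, each term indexed by $k\in K_\zeta$ contributes $2C_k^{(0)}\sqrt{\delta}$ on the side selected by $\mathds{1}_{\{w_0<s_{k-1}(x-\zeta)<0\}}$, while $\psi_0$, $\rho^{(1)}$ and the terms with $c_k\neq\zeta$ (whose singularities sit at positive distance from $\zeta$) together contribute only $O(\delta)$. Hence $\mu(I_\delta) = C(\zeta)\sqrt{\delta}(1+o(1))$, and the same linearisation sandwich now produces $\mu(B_n\cap T^{-p}(B_n))/\mu(B_n)\to |(T^p)'(\zeta)|^{-1/2}$, giving the second formula. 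The main delicate step is precisely this density asymptotic in the critical-orbit case: one must verify carefully that the infinitely many singular terms indexed by $K_\zeta$ sum to a finite constant with the correct $\sqrt{\delta}$ leading order and that the remaining terms of $\rho$ contribute only lower-order $O(\delta)$ uniformly for $\delta$ small; once this is secured, both formulas follow at once by reading off the ratio from the sandwich.
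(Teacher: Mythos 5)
Your proposal is correct and follows essentially the same route as the paper, whose own proof is just a two-line sketch: the case $\zeta\notin\mathcal{O}(0)$ is settled by local regularity of the density ($\mu\asymp\leb$ near $\zeta$, with the density having positive limits there) together with the linearisation of $T^p$ at $\zeta$, and the case $\zeta\in\mathcal{O}(0)$ by reading off the square-root singularity from the explicit density expansion — your write-up merely supplies the details (arithmetic progression of singular indices, geometric decay $C^{(0)}_{k_0+jp}=C^{(0)}_{k_0}|(T^p)'(\zeta)|^{-j/2}$, the $\sqrt{\delta}$ asymptotics) that the paper leaves implicit. One micro-caveat: because of the indicator cutoffs at $w_0,w_1$, the density may have a jump at $\zeta$ even when $\zeta\notin\mathcal{O}(0)$, so "continuity" is slightly too strong; but since both one-sided limits exist and are bounded below, the ratio of $\mu$-measures of concentric symmetric intervals still scales linearly in the radii, and your conclusion $\theta=1-|(T^p)'(\zeta)|^{-1}$ is unaffected.
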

		
		\begin{proof}
		The case when $\zeta \notin \mathcal{O}(0)$ is immediate. Indeed, for $n$ large enough such that $B_n \cap \mathcal{O}(0) = \emptyset$, $\mu_{B_n} \asymp \leb_{B_n}$
		and since $\mu$ is quite regular, then $\theta = 1 - |(T^p)'(\zeta)|^{-1}$.  
			
		If $\zeta \in \mathcal{O}(0)$ and is periodic we  can use the formula given by Proposition \ref{density_Mis} to get the result.
		\end{proof}

	\begin{rem}
		\label{rem:border-case}
		In the proof of Proposition \ref{Dicho_hits} and \ref{Dicho_does_not_hit}, we could not treat special points in $\mathcal{O}(\xi)$ and for which $\xi$ and its symmetric are the borders of $A$. But with remark \ref{can_take_small}, we can choose 
		a smaller interval and thus these points can now be studied with Proposition \ref{Dicho_does_not_hit}. 
	\end{rem}
	
	\subsection{Reconstruction of the clusters} \label{reconstruction_cluster}
	
	In the previous section, we have seen that clusters appear scattered in the time line according to an homogeneous  Poisson process of intensity $\theta := \lim \mu(Q(B_n))/\mu(B_n)$. If $\zeta$ is not a periodic point, by definition, $Q(B_n) = B_n$ 
	and there is nothing more to prove. However, when $\zeta$ is a periodic point, the topological cluster is not seen by the induced map and we need to reconstruct it. As we will see, the nice structure of our shrinking balls $B_n$ and the local 
	dynamics ruled by Hartman-Grobman theorem allow us to rebuild the clusters. 
	
	\begin{rem}
	We remark that the reconstruction procedure is very general and it will work for balls as long as the density is regular enough without any further assumptions.
	\end{rem}  
	
 From this point forward, let us fix $\zeta$ as a periodic point of prime period $p$ and $(B_n)_n$ a sequence of balls shrinking to $\zeta$ as earlier. We introduce some notation following \cite{FFT13}, for example, in order to study the clusters:
	\begin{enumerate}
		\item let $Q_0(B_n) := Q(B_n) = B_n \cap T^{-p}(B_n^c)$ be the outer annuli and for every $k \in \mathbb{N}$, define the higher order annuli as: $Q_{k+1}(B_n) := T^{-p}(Q_{k}(B_n)) \cap B_n$.
		
		\item let $U_0(B_n) := B_n$ and for every $k\in \mathbb{N}$, set $U_{k+1}(B_n) = T^{-p}(U_k(B_n)) \cap B_n$.
	\end{enumerate}
	
	We recall next a useful result in order to establish the convergence of point processes (see for example \cite[Theorem 14.16]{Kal21}).	
	\begin{prop}\label{Kall_conv_PP}
		In order to have the convergence of the point processes $N_n$ to $N$,  it is enough to check that, for  all $I_1, \dots, I_q \subset \mathbb{R}$ with $I_j = [a_j, b_j)$ and $N(\partial I_j) = 0$ a.s, we have 
		\[
		\left(N_n(I_1), \dots, N_n(I_q)\right) \xRightarrow[n\to +\infty]{} (N(I_1), \dots, N(I_q)).
		\] 
	\end{prop}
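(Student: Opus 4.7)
The plan is to establish this convergence criterion by reducing weak convergence of point processes in the space of Radon point measures equipped with the vague topology to joint convergence of evaluations on a generating family of continuity intervals. I would work in the standard framework where point processes are viewed as random elements of $M_p(\mathbb{R})$, the space of boundedly finite point measures, which is Polish under the vague topology.

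First, I would recall that, by the portmanteau-style characterization for point processes (see e.g.\ Kallenberg), $N_n \Rightarrow N$ in the vague topology on $M_p(\mathbb{R})$ is equivalent to the joint convergence
\[
\bigl(N_n(B_1),\dots,N_n(B_q)\bigr) \xRightarrow[n\to\infty]{} \bigl(N(B_1),\dots,N(B_q)\bigr)
\]
for every finite collection $B_1,\dots,B_q$ of relatively compact Borel sets such that $N(\partial B_j) = 0$ almost surely for each $j$. The bulk of the work is then to show that it suffices to restrict this criterion to the subclass of half-open intervals $[a,b)$ satisfying the boundary-continuity assumption.

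Next, I would verify that the family $\mathcal{I} := \{[a,b): a<b, \ N(\{a\}) = N(\{b\}) = 0 \text{ a.s.}\}$ is a $\pi$-system that generates the Borel $\sigma$-algebra on every bounded interval, up to $N$-continuity. The key observation is that at most countably many points carry atoms of the (deterministic) intensity measure of $N$, so for any relatively compact continuity set $B$ one can, given $\varepsilon>0$, bracket $B$ from inside and outside by finite disjoint unions of intervals from $\mathcal{I}$ whose symmetric difference has arbitrarily small intensity. Additivity of $N_n$ and $N$ on disjoint sets then transfers the joint convergence on $\mathcal{I}$ to joint convergence on arbitrary continuity sets, by a standard $\varepsilon$-approximation argument combined with the continuous mapping theorem applied to the finite sums.

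Finally, I would check tightness of the sequence $(N_n)$ in $M_p(\mathbb{R})$: since $N_n([-T,T])$ converges in distribution (hence is tight) for every continuity pair $\{-T,T\}$, the canonical tightness criterion for Radon point measures is fulfilled, and the limit points are characterized by their finite-dimensional distributions on $\mathcal{I}$, which by hypothesis all coincide with those of $N$. The main obstacle I anticipate is handling the boundary-continuity bookkeeping cleanly, namely ensuring that the approximating intervals can always be chosen in $\mathcal{I}$; this is resolved by the at-most-countability of atoms of the intensity measure, which leaves a dense set of admissible endpoints.
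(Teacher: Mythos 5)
The paper does not prove this proposition at all: it is stated as a recalled textbook fact and attributed to \cite[Theorem~14.16]{Kal21}, so there is no in-paper argument to compare against. Your sketch is essentially the standard proof of that textbook result: tightness of $(N_n)$ in the space of boundedly finite point measures obtained from distributional convergence (hence tightness) of $N_n([-T,T])$ for continuity levels $T$, plus identification of the limit through finite-dimensional distributions on a dissecting semiring of half-open continuity intervals, with the approximation of general relatively compact continuity sets by finite unions of such intervals. This is a legitimate and complete route in outline. Two points deserve care if you were to write it out. First, the set of ``admissible endpoints'' should be described via the \emph{fixed atoms} of $N$, i.e.\ the points $t$ with $\mathds{P}(N(\{t\})>0)>0$; this set is at most countable for any a.s.\ boundedly finite point process, whereas the intensity measure of $N$ need not be locally finite in general (it happens to be for the homogeneous and compound Poisson limits arising in this paper, so your version suffices here, but the general statement is about fixed atoms). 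Second, your opening step already invokes the equivalence of vague convergence with joint convergence over \emph{all} relatively compact Borel continuity sets, which is itself a nontrivial theorem of the same depth as the statement being proved; the cleaner logical order is the one you give at the end (tightness first, then uniqueness of subsequential limits via the fidis on the interval semiring, using that the intervals $[a,b)$ with admissible endpoints generate the Borel $\sigma$-algebra and are closed under intersection), so the first paragraph is best read as motivation rather than as a step of the proof.
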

	Define an adjusted version of the first hitting time to $A$ by
	\[
	h_A(x) := \min\{n\geq 0 : T^n(x) \in A\} 
	\]
	and for every $\ell \in \mathbb{N}$, the corresponding $\ell$-th hitting time to $A$: 
	\[
	h^{(\ell)}_A(x): = \min \{n\in \mathbb{N}: \text{Card}(A\cap \{x, \dots, T^nx\}) = \ell\}.
	\]
	Note that if $x\notin A$, then $h^{(\ell)}(x) = \RT^{(\ell)}(x)$ and if $x \in A$, $h^{(1)}(x) = 0$ and $h^{(\ell+1)}(x) = \RT^{(\ell)}(x)$. We introduce $h^{(\ell)}$ for technical reasons related with the forthcoming definition of the return time processes 
	which will add a mass at $0$. 
	
	Proposition \ref{convergence_for_Q} gives us the convergence: 
	\[N'_n = \sum_{\ell \geq 1} \delta_{\left\{\mu(B_n)h_{Q(B_n)}^{(\ell)}\right\}} \xRightarrow[n\to + \infty]{\mu} N_{\theta},\]
	with $N_{\theta}$ denoting an homogeneous Poisson process of intensity $\theta$. 
		
 	When an orbit enters $Q(B_n)$ it determines the ending of cluster. It will be useful to consider also the beginning of a cluster and, for that purpose, we introduce  the entrance set:
 	\[
	E(B_n) := T^{-p}B_n \cap B^c_n
	\]
	which  marks the beginning of a cluster (after $p$ steps). $Q(B_n)$ has the advantage of being inside $B_n$ but studying hits to $E(B_n)$ helps in relating the point processes of cluster locations and of hits to the target sets. 
	Instead of studying the point process of cluster locations by considering entrances to $Q(B_n)$, we consider the point process counting hits to $E(B_n)$. Namely, let 
	\[
	N''_n := \sum_{\ell \geq 1} \delta_{\left\{\mu(B_n)h_{Q(B_n)}^{(\ell)}\right\}}.
	\]
	
	\begin{lem} \label{from_Q_to_E}
		We have 
		\begin{equation*}
			N''_n \xRightarrow[n\to + \infty]{\mu_{E(B_n)}} N_{\theta} + \delta_0,
		\end{equation*} 
		with $N_{\theta}$ denoting the homogeneous Poisson process with intensity $\theta$.
	\end{lem}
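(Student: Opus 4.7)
The plan is to leverage Proposition~\ref{convergence_for_Q} and the bijective time-shift pairing between hits to $E(B_n)$ and hits to $Q(B_n)$ to transfer the convergence from $N'_n$ (under $\mu$) to $N''_n$ (under $\mu_{E(B_n)}$). The first observation is structural: each topological cluster of visits to $B_n$ begins with a hit to $E(B_n)$ — because $E(B_n) = T^{-p}B_n \cap B_n^c$ records an orbit that is about to enter $B_n$ for the first time in $p$ steps — and ends with a hit to $Q(B_n)$. Since the cluster-size distribution is asymptotically $\mathrm{Geo}(\theta)$, clusters have $O_{\mathbb{P}}(1)$ duration, while the inter-cluster gaps are of order $\mu(B_n)^{-1}$. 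Therefore, in the normalized time scale, each hit to $E(B_n)$ and its companion hit to $Q(B_n)$ differ by $o(1)$. Moreover, by $T$-invariance of $\mu$, we have $\mu(E(B_n)) = \mu(Q(B_n))$.

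I would then reduce, via Proposition~\ref{Kall_conv_PP}, to joint convergence of $\bigl(N''_n(I_1),\dots,N''_n(I_q)\bigr)$ for intervals $I_j=[a_j,b_j)$ with $0\notin\partial I_j$. The $\delta_0$ atom of the limit is immediate: conditioning on $x\in E(B_n)$ forces $h^{(1)}_{E(B_n)}(x)=0$, so $N''_n$ always charges $\{0\}$ with mass one. For the higher-order hits $h^{(\ell)}_{E(B_n)}$ with $\ell\geq 2$, the pairing lets me replace them by $h^{(\ell-1)}_{Q(B_n)}(T^{\sigma(x)}x)$, where $\sigma(x)$ is the exit time of the initial cluster; by the Geometric control on cluster sizes, $\mu(B_n)\sigma(x)\to 0$ in $\mu_{E(B_n)}$-probability, so this substitution is harmless in normalized time.

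It then remains to show that under the change of starting law from $\mu$ to $\mu_{E(B_n)}$, the Poisson limit of Proposition~\ref{convergence_for_Q} persists. For this I would use the exponential $\phi$-mixing of the induced system \eqref{phi-mixing}, together with the already-established conditions $\DD_0(u_n)^*$ and $\DD'_0(u_n)^*$ for the shadowing process, to decouple the initial conditioning on $E(B_n)$ from the behaviour of $N'_n$ on any compact time window shifted past the first cluster. Concretely, after a time lag that is $o(\mu(B_n)^{-1})$ but larger than the $\phi$-mixing time rescaled by the cluster-exit, the conditional law of the tail of the point process agrees with its unconditional law up to an error vanishing with $n$; combined with Proposition~\ref{convergence_for_Q}, this yields the $N_\theta$ component of the limit.

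The main obstacle I anticipate is the quantitative control of the decoupling: one has to choose a time lag simultaneously long enough that $\phi$-mixing erases the conditioning on the asymptotically rare set $E(B_n)$, yet short enough (on the scale $\mu(B_n)^{-1}$) that no intermediate cluster is missed. This is essentially a two-scale argument analogous to the selection of the sequences $v_n$, $\kappa(n)$, $t_n$ and $\ell_n$ used in the verification of $\DD_q(u_n)^*$ and $\DD'_q(u_n)^*$ in the previous subsection, and it should transfer with minor adjustments to handle the push-forward of $\mu_{E(B_n)}$ by $T^{\sigma(x)}$ back into the bulk of $\mu$.
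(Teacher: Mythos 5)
The first half of your argument is essentially the paper's: the paper also pairs each return to $Q(B_n)$ with the preceding return to $E(B_n)$ (outside an event of measure at most $\varepsilon$ where $\RT_{Q(B_n)}\leq p$), shows via Proposition~\ref{Kall_conv_PP} that the normalised discrepancy between the two hitting-time sequences is negligible because $\mu\big(E(B_n)\cap\{\RT_{Q(B_n)}\geq \delta/\mu(B_n)\}\big)=o(\mu(B_n))$, and obtains $N''_n\Rightarrow N_\theta$ under $\mu$; the $\delta_0$ atom is, as you say, just the convention $h^{(1)}_{E(B_n)}=0$ on $E(B_n)$. Where you diverge is the passage from the ambient law $\mu$ to the conditional law $\mu_{E(B_n)}$: the paper settles this in one line by invoking the general equivalence between hitting-time and return-time point-process limits (\cite[Theorem 3.1]{Zwe16}, \cite[Theorem 1]{HLV07}), which is a soft, measure-theoretic statement requiring no mixing at all, whereas you propose to reprove this transfer by a bespoke decoupling argument based on the exponential $\phi$-mixing \eqref{phi-mixing} and the conditions $\DD_0(u_n)^*$, $\DD'_0(u_n)^*$.

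That decoupling step, as sketched, has a genuine gap. The $\phi$-mixing you want to use is a property of the induced system $(A,T_A,\mu_A)$ with respect to the cylinder filtration $\mathcal{F}_{j,k}$ of the induced partition, and the conditions $\DD_0(u_n)^*$, $\DD'_0(u_n)^*$ were verified for the cylinder approximations $U^{\pm}(B'_n,v_n)$ of the shadow sets inside $A$ — not for $E(B_n)$. But in the case at hand $\mathcal{O}(\zeta)\cap A=\emptyset$, so $E(B_n)$, the clusters, and the exit time $\sigma$ all live outside the inducing base and are not events of the induced filtration; moreover the time appearing in \eqref{phi-mixing} is induced time, while your lag must be calibrated in original time, and under the conditioning on $E(B_n)$ the random time change between the two scales is exactly what the shadow-set machinery (Lemma~\ref{rewrite_B'_n}, the cylinder approximations, Theorem~\ref{abstract-theorem}) was built to control for \emph{unconditioned} laws. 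To make your route rigorous you would have to redo all of that under the conditional measure $\mu_{E(B_n)}$ (including pushing $\mu_{E(B_n)}$ forward by $T^{\sigma}$ into the base and comparing it with $\mu_A$ on cylinders, with errors divided by $\mu(E(B_n))\asymp 1/n$), which is precisely the work the abstract hitting/return equivalence lets the paper avoid. As written, "the conditional law of the tail agrees with its unconditional law after a suitable lag" is asserted, not proved, and it is the crux of the lemma.
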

	\begin{proof}
		We use Proposition~\ref{Kall_conv_PP} to obtain the convergence of $N''_n$ under $\mu$. Let $I_1,\dots, I_n$ be such that $N_{\theta}(\partial I_j) = 0$ a.s for every $j \in \{1,\dots,q\}$. We can choose $n$ large enough so that $\mu(\RT_{Q(B_n)} \leq p) \leq \varepsilon$. Observe that on the complement of the set $\{\RT_{Q(B_n)} \leq p\}$, every return to $Q(B_n)$ is preceded by a return to $E(B_n)$. We also consider $\delta > 0$ such that $\mu(N'_n([a_j-\delta, a_j + \delta]) \geq 1), \mu(N'_n([b_j-\delta, b_j + \delta]) \geq 1) \leq \varepsilon$ for every $j \in \{1,\dots,q\}$. Then we have
		\begin{align*}
			& \mu \left(\left(N''_n(I_1), \dots, N''_n(I_n)\right) \neq (N'_n(I_1), \dots, N'_n(I_n))\right) \leq \sum_{j = 1}^q \mu(N''_n(I_j) \neq N'_n(I_j)) \\
			& \leq \sum_{j =1}^q \sum_{\ell = 0}^{k-1} \mu\left(\mu(B_n)\RT^{(\ell)}_{E(B_n)} \leq a_j \leq a_j + \delta \leq \mu(B_n)\RT^{(\ell)}_{E(B_n)}\right)\\
			& \qquad +\sum_{j =1}^q \sum_{\ell = 0}^{k-1} \mu\left(\mu(B_n)\RT^{(\ell)}_{E(B_n)} \leq b_j \leq b_j + \delta \leq \mu(B_n)\RT^{(\ell)}_{E(B_n)}\right) + (2q+1)\varepsilon \\
			& \leq \sum_{j = 1}^q \sum_{\ell = 0}^{k-1} \mu\left(\RT_{Q(B_n)}^{(\ell)} - \RT_{E(B_n)}^{(\ell)} \geq 2\delta/\mu(B_n)\right) +(2q+1)\varepsilon \\
			& \leq \sum_{j = 1}^q \sum_{\ell = 0}^{k-1} \sum_{p = 0}^{\max I_j/\mu(B_n)}\mu\left(\RT_{E(B_n)}^{(\ell)} = p,\; T^{-p}(E(B_n) \cap \{\RT_{Q(B_n)} \geq \delta/\mu(B_n)\})\right) +(2q+1)\varepsilon\\
			& \leq qk\frac{\max I_j}{\mu(B_n)}\mu\left(E_n\cap \{ \RT_{Q(B_n)} \geq \delta/\mu(B_n)\}\right) + (2q+1)\varepsilon\\
			& \leq (2q+2)\varepsilon \quad \text{for $n$ large enough}.
		\end{align*}

		\noindent The convergence in law of $N''_n$ to $N_{\theta}$ under $\mu$ implies the convergence in law of $N''_n$ under $\mu_{E(B_n)}$ to $N_{\theta} + \delta_0$  (\cite[Theorem 3.1]{Zwe16} or \cite[Theorem 1]{HLV07}).
	\end{proof}
	
	In order to compare the measure of the successive annuli  $Q_k(B_n)$ and the $U_k(B_n)$ and the  measure of the respective preimages in $E(B_n)$, we need the following lemma.
	
	\begin{lem} \label{comp_meas_E_n_B_n}
		For every sequence $A_n \in \mathcal{F} \cap B_n$, 
		\begin{equation*}
			T^p_{\#}\mu_{E(B_n)}(A_n) \sim \mu_{B_n}(A_n).
		\end{equation*}
	\end{lem}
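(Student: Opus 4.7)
The plan is to unfold the pushforward measure and reduce the statement to a local density computation near the periodic point $\zeta$. First I would compute, using $A_n \subset B_n$ (so $T^{-p}A_n \subset T^{-p}B_n$),
\[
T^p_{\#}\mu_{E(B_n)}(A_n) \;=\; \frac{\mu\bigl(T^{-p}A_n \cap E(B_n)\bigr)}{\mu(E(B_n))} \;=\; \frac{\mu\bigl(T^{-p}A_n \cap B_n^c\bigr)}{\mu(E(B_n))},
\]
and then use $T$-invariance together with the identity $T^{-p}A_n \cap B_n = T^{-p}A_n \cap U_1(B_n)$ to split
\[
\mu\bigl(T^{-p}A_n \cap B_n^c\bigr) \;=\; \mu(A_n) \;-\; \mu\bigl(T^{-p}A_n \cap U_1(B_n)\bigr).
\]
The specific case $A_n = B_n$ will give the asymptotic $\mu(E(B_n)) \sim \theta \mu(B_n)$, where $\theta$ is the extremal index computed in the lemma preceding this one.

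The core of the proof is the claim
\[
\mu\bigl(T^{-p}A_n \cap U_1(B_n)\bigr) \;\sim\; (1-\theta)\mu(A_n).
\]
For this, I would use that, since $B_n$ shrinks to $\zeta$, for $n$ large the map $T^p$ admits a single smooth inverse branch $\varphi_n\colon B_n \to U_1(B_n)$ (this is where periodicity of $\zeta$ and local $C^1$ behaviour of $T^p$ away from the critical point $0$ come in). Performing the change of variables $x = \varphi_n(y)$,
\[
\mu\bigl(T^{-p}A_n \cap U_1(B_n)\bigr) \;=\; \int_{A_n} \rho(\varphi_n(y))\,|\varphi_n'(y)|\,\dd y.
\]
The asymptotic $(1-\theta)\mu(A_n)$ then follows from showing that the ratio $\rho(\varphi_n(y))|\varphi_n'(y)|/\rho(y)$ converges \emph{uniformly} on $B_n$ to $1-\theta$ as $n\to\infty$. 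When $\zeta \notin \mathcal{O}(0)$, the density $\rho$ is continuous and bounded away from $0$ near $\zeta$, and $|\varphi_n'(y)| \to |(T^p)'(\zeta)|^{-1} = 1-\theta$, which gives the uniform convergence directly. When $\zeta \in \mathcal{O}(0)$, the density has the explicit square-root singularity $\rho(y) \sim C/\sqrt{|y-\zeta|}$ from Property~\eqref{density_Mis}; plugging in $|\varphi_n(y)-\zeta| \sim |y-\zeta|/|(T^p)'(\zeta)|$ produces the compensating factor $\sqrt{|(T^p)'(\zeta)|}$, and the product $\rho(\varphi_n(y))|\varphi_n'(y)|/\rho(y)$ tends uniformly to $|(T^p)'(\zeta)|^{-1/2} = 1-\theta$, matching the critical-case formula for $\theta$.

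Combining these ingredients yields $\mu(T^{-p}A_n \cap B_n^c) \sim \theta\mu(A_n)$ and $\mu(E(B_n)) \sim \theta\mu(B_n)$, whence
\[
T^p_{\#}\mu_{E(B_n)}(A_n) \;\sim\; \frac{\theta\mu(A_n)}{\theta\mu(B_n)} \;=\; \mu_{B_n}(A_n),
\]
as claimed. The main technical obstacle is the critical case, where one must extract the precise cancellation between the blow-up of $\rho$ at $\zeta$ and the contraction factor $|\varphi_n'|$; once the explicit density formula from \eqref{density_Mis} is used, this reduces to a clean square-root computation, and the uniformity in $y \in B_n$ (needed because $A_n$ is an arbitrary measurable subset of $B_n$) is guaranteed by the fact that the error terms $\psi_0$ and $\rho_a^{(1)}$ are subdominant compared to the singular part $\rho_a^{(0)}$ near $\zeta$.
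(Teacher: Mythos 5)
Your proposal follows essentially the same route as the paper's proof: the same unfolding of the pushforward via $E(B_n)=T^{-p}B_n\cap B_n^c$ and the splitting $\mu(T^{-p}A_n\cap B_n^c)=\mu(A_n)-\mu(T^{-p}A_n\cap U_1(B_n))$, the same change of variables along $T^p\colon U_1(B_n)\to B_n$ (you phrase it with the inverse branch $\varphi_n$, the paper with the forward map, which is immaterial), and the same two-case comparison of $\rho\circ T^p\,|(T^p)'|$ with $\rho$ using continuity of the density off the critical orbit and the explicit square-root singularity from Property~\eqref{density_Mis} on it. The argument is correct and matches the paper's, including the identification $\mu(E(B_n))=\mu(Q(B_n))\sim\theta\mu(B_n)$ used to close the computation.
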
 
	
	\begin{proof}
		We have
		\begin{align*}
			T^{p}_{\#} \mu_{E(B_n)}(A_n) & = \mu_{E(B_n)}(T^{-p}A_n) = \frac{1}{\mu(E(B_n))} \left(\mu(A_n) - \mu(B_n \cap T^{-p}(A_n))\right)\\
			& =  \frac{1}{\mu(E(B_n))} \left(\mu(A_n \cap B_n) - \mu(U_1(B_n) \cap T^{-p}(A_n))\right)
		\end{align*}
		
		\noindent Now, by change of variables (since $T^p : U_1(B_n) \to B_n$ is one to one and onto), we have
		\begin{align*}
			\mu(A_n\cap B_n) = \int_{B_n} \mathds{1}_{A_n} \rho\,\dd \leb = \int_{U_1(B_n)} \mathds{1}_{A_n} \circ T^p \rho \circ T^p |(T^p)'| \,\dd\leb.
		\end{align*}
		On the other hand, we have
		\begin{align*}
			\mu(U_1(B_n)\cap T^{-p}A_n) = \int_{U_1(B_n)} \mathds{1}_{A_n} \circ T^p \rho \,\dd\leb.
		\end{align*}
		
		Since, we would like to compare both measures, we need to compare $\rho \circ T^p|(T^p)'|$ and $\rho$ on $U_1(B_n).$ Let us fix a small $\varepsilon > 0$. Since $T^p$ is a polynomial, $|(T^p)'|$ is continuous and for $n$ large enough, $|(T^p)'(x)| \in [|(T^p)'(\zeta)| - \varepsilon, |(T^p)'(\zeta)| + \varepsilon]$ for $x \in B_n$. 
		
		Now, we need to consider two cases. First, if $\zeta \notin \mathcal{O}(0)$, by \eqref{density_Mis}, $\rho$ is continuous at $\zeta$ and thus provided $n$ is large enough, $\rho(x) \in [\rho(\zeta) - \varepsilon, \rho(\zeta) - \varepsilon]$ for $x \in B_n$. Since $T^p(U_1(B_n)) \subset B_n$ by construction, $\rho\circ T^p \in [\rho(\zeta) - \varepsilon, \rho(\zeta) + \varepsilon]$ for $x \in U_1(B_n)$. Thus, 
		
		\begin{align*}
			\frac{|(T^p)'(\zeta)|-\varepsilon}{1+\varepsilon} \leq \frac{\mu(A_n\cap B_n)}{\mu(U_1(B_n)\cap T^{-p}A_n)} \leq \frac{|(T^p)'(\zeta)| + \varepsilon}{1 - \varepsilon}.
		\end{align*}
		
		\noindent When, $\zeta \in \mathcal{O}(0) \backslash \{0\}$, $\rho$ has a singularity at $\zeta$, but we can still compare $\rho$ and $\rho \circ T^p$. Indeed,  using \eqref{density_Mis} 
		
		\begin{align*}
			\rho \circ T^p(x) &= \psi_0\circ T^p(x) + \sum_{k = 1}^{+\infty} \frac{C_k^{(0)}}{\sqrt{|T^px - c_k|}}\mathds{1}_{\{w_0 < s_{k-1}(T^px - c_k)< 0\}} \\
			&\qquad + \sum_{k = 1}^{+\infty}C_k^{(1)} \sqrt{|T^px - c_k|}\mathds{1}_{\{w_0 < s_{k-1}(T^px - c_k)< 0\}}.
		\end{align*} 
		
		The only problem is when $c_k = \zeta$ in the central term (all the other are continuous at $\zeta$ and thus equal to a constant up to $\varepsilon$). We have
		
		\begin{align*}
			& \sum_{k = 1, c_k = \zeta}^{+\infty} \frac{C_k^{(0)}}{\sqrt{|T^px - \zeta|}}\mathds{1}_{\{w_0 < s_{k-1}(x - c_k)< 0\}} \\
			& = \frac{C_1}{\sqrt{|T^px - \zeta|}}\mathds{1}_{\{w_0 < (T^px - \zeta)< 0\}} + \frac{C_2}{\sqrt{|T^px - \zeta|}}\mathds{1}_{\{w_0 < -(T^px - \zeta)< 0\}}\\
			& = \frac{C_1}{\sqrt{|T^px - T^p\zeta|}}\mathds{1}_{\{w_0 < (T^px - \zeta)< 0\}} + \frac{C_2}{\sqrt{|T^px - T^p\zeta|}}\mathds{1}_{\{w_0 < -(T^px - \zeta)< 0\}} \\
			& =  \frac{C_1}{\sqrt{|(T^p)'(c)|}\sqrt{|x - \zeta|}}\mathds{1}_{\{w_0 < (T^px - \zeta)< 0\}} + \frac{C_2}{\sqrt{|(T^p)'(c)|}\sqrt{|x - \zeta|}}\mathds{1}_{\{w_0 < -(T^px - \zeta)< 0\}} \\
			& = \frac{1}{\sqrt{|(T^p)'(c)|}}\left(\frac{C_1}{\sqrt{|x - \zeta|}}\mathds{1}_{\{w_0 < (T^px - \zeta)< 0\}} + \frac{C_2}{\sqrt{|x - \zeta|}}\mathds{1}_{\{w_0 < -(T^px - \zeta)< 0\}}\right)
		\end{align*}
		
		\noindent Thus, provided $n$ is large enough, 
		\begin{align*}
			\frac{\sqrt{|(T^p)'(\zeta)|} - C\varepsilon}{1+\varepsilon} \leq \frac{\mu(A_n\cap B_n)}{\mu(U_1(B_n)\cap T^{-p}A_n)} \leq \frac{\sqrt{|(T^p)'(\zeta)|} + C\varepsilon}{1+\varepsilon}.
		\end{align*}
		
		Hence, returning to the expression of $T^p_{\#}\mu_{E(B_n)}$, we get 
		\begin{align*}
			T^{p}_{\#} \mu_{E(B_n)}(A_n) & = \mu_{E(B_n)}(T^{-p}A_n) = \frac{1}{\mu(E(B_n))} \left(\mu(A_n) - \mu(B_n \cap T^{-p}(A_n))\right)\\
			& =  \frac{1}{\mu(E(B_n))} \left(\mu(A_n \cap B_n) - \mu(U_1(B_n) \cap T^{-p}(A_n))\right) \\
			& \sim \frac{\theta}{\mu(E(B_n))} \mu(A_n \cap B_n) \\
			& \sim \frac{1}{\mu(B_n)}\mu(A_n \cap B_n)\\
			& \sim \mu_{B_n}(A_n).
		\end{align*}
	\end{proof}
	
Let $K^{(\ell)}_n(x)$ be the unique $k \in \mathbb{N}$ such that $T^p \circ T^{h^{(\ell)}_{E(B_n)}(x)}(x) \in Q_k(B_n)$, which is to say that $K^{(\ell)}_n$ is the size of the $\ell$-th cluster. Then, we define the point process:
\[
\widehat{N}_n := \sum_{\ell\geq 1} K^{(\ell)}_n \delta_{\left\{\mu(B_n)h_{E_n}^{(\ell)}\right\}}.
\]
\begin{lem} \label{from_PPP_to_CPP}
We have that
\[
\widehat{N}_n \xRightarrow[]{\;\mu_{E(B_n)}\;} N_{\theta, \mathrm{Geo}(\theta)} + X_1\delta_0
\]
where $N_{\theta, \mathrm{Geo}(\theta)}$ is a compound Poisson process with intensity $\theta$ and geometric multiplicity distribution law of parameter $\theta$, while $X_1\sim \mathrm{Geo}(\theta)$ is independent of $N_{\theta, 
\mathrm{\mathrm{Geo}}(\theta)}$.
\end{lem}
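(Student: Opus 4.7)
The plan is to invoke Proposition~\ref{Kall_conv_PP}, which reduces the convergence of $\widehat N_n$ to identifying the limit of $(\widehat N_n(I_1),\dots,\widehat N_n(I_q))$ for arbitrary disjoint intervals $I_j=[a_j,b_j)$ avoiding the atoms of the candidate limit. Rewriting $\widehat N_n(I_j)=\sum_{\ell}K_n^{(\ell)}\mathbf 1_{I_j}(\mu(B_n)h_{E_n}^{(\ell)})$ exhibits $\widehat N_n(I_j)$ as the aggregate of the marks $K_n^{(\ell)}$ over the cluster positions already counted by $N''_n(I_j)$. Since Lemma~\ref{from_Q_to_E} has already handled the positions, the remaining task is to show that the marks $K_n^{(\ell)}$ are asymptotically i.i.d.\ $\mathrm{Geo}(\theta)$ and asymptotically independent of the position process.

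For the distribution of a single mark, I would combine Lemma~\ref{comp_meas_E_n_B_n} with local linearisation. Since $x\in E(B_n)$ implies $h_{E_n}^{(1)}(x)=0$, we have $\mu_{E(B_n)}(K_n^{(1)}=k)=T^p_\#\mu_{E(B_n)}(Q_k(B_n))\sim\mu_{B_n}(Q_k(B_n))$. Writing $Q_k(B_n)=U_k(B_n)\setminus U_{k+1}(B_n)$ and using Hartman--Grobman near the periodic point $\zeta$, the nested sets satisfy $U_k(B_n)\approx B\bigl(\zeta,r_n|(T^p)'(\zeta)|^{-k}\bigr)$. When $\zeta\notin\mathcal O(0)$ the density is continuous at $\zeta$ and $\mu(U_k(B_n))/\mu(B_n)\to|(T^p)'(\zeta)|^{-k}=(1-\theta)^k$; when $\zeta\in\mathcal O(0)$ the integrable $1/\sqrt{|x-\zeta|}$ singularity coming from property~\eqref{density_Mis} yields a square-root scaling, producing the same shape with the adjusted exponent $\theta=1-|(T^p)'(\zeta)|^{-1/2}$. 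Subtracting consecutive terms gives $\mu_{E(B_n)}(K_n^{(1)}=k)\to\theta(1-\theta)^k$, identifying the limiting one-mark law as $\mathrm{Geo}(\theta)$.

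For the joint laws, I would show that for $(k_1,\dots,k_m)$ and any prescribed window of cluster positions, the conditional joint probability factorises in the limit as $\prod_j\theta(1-\theta)^{k_j}$. The key input is that, between consecutive entries to $E(B_n)$, long orbital excursions occur that almost surely pass through the inducing base $A$; this allows one to apply the exponential $\phi$-mixing of the induced Rychlik map $(A,T_A,\mu_A)$ from property~\eqref{phi-mixing} to decorrelate successive marks. Combined with the position convergence from Lemma~\ref{from_Q_to_E}, the joint finite-dimensional distributions of $\widehat N_n$ converge to those of $N_{\theta,\mathrm{Geo}(\theta)}+X_1\delta_0$; the atom $X_1\delta_0$ simply records the fact that, under $\mu_{E(B_n)}$, a cluster is necessarily present at time $0$ whose size is an independent $\mathrm{Geo}(\theta)$ variable.

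The main obstacle is the decorrelation of the joint mark law. Since the events $\{K_n^{(\ell)}=k_\ell\}$ are supported on sets $Q_{k_\ell}(B_n)\subset B_n$ that lie outside the base $A$, one cannot apply the $\phi$-mixing estimate for $T_A$ directly. I expect one must first replace these sets by their shadows in $A$ (as in Lemma~\ref{rewrite_B'_n}) and sandwich them between unions of $v_n$-cylinders in the spirit of Lemma~\ref{approx_cyl_hits}; the $\phi$-mixing of $T_A$ applied on these cylinders then factorises the joint probabilities, while the resulting approximation errors can be absorbed using Lemma~\ref{cyl_mixing} together with the density regularity in property~\eqref{density_Mis} and the bounded distortion from property~\eqref{Bounded_distortion_outside_A}.
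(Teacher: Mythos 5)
Your reduction to finite--dimensional distributions and your treatment of the one--dimensional mark law are fine: using $h^{(1)}_{E(B_n)}=0$ on $E(B_n)$, the identity $\mu_{E(B_n)}(K^{(1)}_n=k)=T^p_{\#}\mu_{E(B_n)}(Q_k(B_n))\sim\mu_{B_n}(Q_k(B_n))$ via Lemma~\ref{comp_meas_E_n_B_n}, and the local scaling of $\mu(U_k(B_n))/\mu(B_n)$ (with the square--root correction when $\zeta\in\mathcal O(0)$) is essentially equivalent to the computation in the paper, which instead writes $\mu_{E(B_n)}(K^{(\ell)}_n\geq k)=\mu(Q_{k-1}(B_n))/\mu(Q(B_n))$ directly from $T$--invariance of $\mu$ and the relation $\mu(E(B_n))=\mu(Q(B_n))$.

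The genuine gap is in the joint law, which is the heart of the lemma. You only assert that the marks are asymptotically i.i.d.\ and independent of the positions, and your proposed mechanism --- exponential $\phi$-mixing of the induced map applied after passing to shadow sets and sandwiching by $v_n$-cylinders --- is left at the level of a plan precisely at its hardest point. Concretely: the events $\{K^{(\ell)}_n=k\}$ live in $B_n$, outside the base, and the reference measure $\mu_{E(B_n)}$ has vanishing mass, so after conditioning the mixing error must be beaten by a lower bound on the number of $T_A$-iterates separating consecutive clusters; this requires a separate high--probability gap estimate (of the order $\gg\log(1/\mu(B_n))$) that you neither state nor prove, and the transfer of the conditional measure $\mu_{E(B_n)}$ to a shadow--set statement in $A$ also needs a distortion argument you do not supply. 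Moreover the independence of the marks from the cluster positions, and of $X_1=K^{(1)}$ from the rest of the process, is claimed but never argued. The paper avoids all of this: it proves the factorisation $\mu_{E(B_n)}(K^{(1)}_n\geq k,\,K^{(\ell)}_n=k')\sim(1-\theta)^k\,\mu_{E(B_n)}(K^{(\ell)}_n=k')$ by an exact computation using only invariance of $\mu$, the telescoping identity over the annuli (the sets $E(B_n)\cap T^{-p}Q_j(B_n)$, $j\geq0$, being pairwise disjoint), and the asymptotic annulus scaling --- no mixing, no cylinder approximation, no shadow sets. Your route might be made to work, but as written the decorrelation step is missing, so the lemma is not proved; if you want to keep a dynamical--mixing argument you must carry out the shadow encoding, the gap estimate, and the conditional--measure transfer explicitly, whereas the direct invariance computation renders all of that unnecessary.
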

	
\begin{rem}
We observe that since we are studying returns now instead of hits, we obtain a slightly different limiting process, in particular, with a point mass at $0$.
\end{rem}

	\begin{proof}
		First, note that $\mu(E_n) = \mu(T^{-p}B_n \cap B_n^c) = \mu(T^{-p}B_n) - \mu(T^{-p}B_n \cap B_n) = \mu(B_n) - \mu(B_n \backslash Q(B_n)) = \mu(Q(B_n))$. 
		
		Furthermore, we have for every $\ell,k \in \mathbb{N}$,
		\begin{align*}
			\mu_{E(B_n)}\left(K^{(\ell)}_n \geq k\right) & = \mu_{E(B_n)} \left(T^p \circ T^{\RT_{E(B_n)}^{(\ell)}} \in U_{k-1}(B_n)\right) \\
			& = \mu_{E(B_n)} \left(T^{-p} \left(U_{k-1}(B_n)\right)\right) \\
			& = \frac{1}{\mu(Q(B_n))} \mu \left(T^{-p}\left(U_{k-1}(B_n)\right) \cap B_n^c\right) \\
			& = \frac{\mu\left(T^{-p} \left(U_{k-1}(B_n)\right)\right) - \mu \left(T^{-p}\left(U_{k-1}(B_n)\right) \cap B_n\right)}{\mu(Q(B_n))} \\
			& = \frac{\mu \left(U_{k-1}(B_n)\right) - \mu \left(U_{k}(B_n)\right)}{\mu(Q(B_n))} \\
			& = \frac{\mu(Q_{k-1}(B_n))}{\mu(Q(B_n))} \xrightarrow[n\to +\infty]{} (1-\theta)^{k}.
		\end{align*}
		So, $K^{(\ell)}_n \xRightarrow[n\to + \infty]{\mu_{E_n}} K^{(\ell)} \sim \mathrm{Geo}(\theta)$ for every $\ell \geq 1$. 
		
		Moreover, starting from $E(B_n)$, we have for $\ell < \ell'$ and $k,k' \in \mathbb{N}$,
		\begin{align*}
			& \mu_{E(B_n)} \left(\left(K^{(\ell)}, K^{(\ell')}\right) = (k,k') \right) \\
			& = \mu_{E(B_n)}\left(K_1 \circ T^{\ell}_{E(B_n)} = k, \; K_{\ell'-\ell+1} \circ T^{\ell}_{E(B_n)} = k'\right) \\
			& = \mu_{E(B_n)}\left(K^{(1)}_n = k,\; K^{(\ell - \ell'+1)} = k'\right).
		\end{align*}
		Thus, it is enough to look at the independence between $K^{(1)}$ and $K^{(\ell)}$.
		
		\begin{align*}
			&\mathbb{P}(K^{(1)}_n \geq k, K^{(\ell)}_n = k')  = \mu_{E(B_n)} \left(K^{(1)}_n \geq k,\; K^{(\ell)}_n = k'\right) \\
			&\qquad = \mu_{E(B_n)} \left(T^{-p} \left(U_{k-1}(B_n)\right) \cap T^{-\ell + 1}_{E(B_n)}\left(T^{-p}(Q_{k'-1}(B_n))\right)\right) \\
			&\qquad = \mu_{E(B_n)} \left(T^{-p} \left(U_{k-1}(B_n) \cap T^{-\ell + 1}_{E(B_n)}\left(Q_{k'-1}(B_n)\right)\right)\right) \\ 
			&\qquad = \frac{1 }{\mu(E(B_n))} \Biggl( \mu \left( T^{-p} \left(U_{k-1}(B_n) \cap T^{-\ell + 1}_{E(B_n)}\left(T^{-p}(Q_{k'-1}(B_n))\right)\right)\right) \\&\qquad\qquad\qquad -
			\mu \left( B_n \cap T^{-p} \left(U_{k-1}(B_n) \cap T^{-\ell + 1}_{E(B_n)}\left(T^{-p}(Q_{k'-1}(B_n))\right)\right) \right)  \Biggl) \\
			&\qquad =  \frac{1}{\mu(E(B_n))} \Biggl(\mu \left( U_{k-1}(B_n) \cap T^{-\ell + 1}_{E(B_n)}\left(T^{-p}(Q_{k'-1}(B_n))\right)\right) \\ &\qquad\qquad\qquad-
			\mu \left( T^{-\ell + 1}_{E(B_n)}\left(T^{-p}(Q_{k'-1}(B_n))\right) \cap U_{k}(B_n) \right) \Biggr) \\
			&\qquad = \frac{1}{\mu(E(B_n))}\mu \left( T^{-\ell + 1}_{E(B_n)}\left(T^{-p}(Q_{k'-1}(B_n))\right) \cap Q_{k-1}(B_n) \right) \\
			&\qquad \sim \frac{(1-\theta)^k}{\mu(E(B_n))}\mu\left(T^{-\ell + 1}_{E(B_n)}\left(T^{-p}\left(Q_{k'-1}(B_n)\right)\right) \cap Q(B_n)\right).
		\end{align*}
		
		\noindent Now, we have
		\begin{align*}
			&\mu\left(T^{-\ell + 1}_{E(B_n)} \left(T^{-p}\left(Q_{k'-1}(B_n)\right)\right) \cap Q(B_n)\right) = \mu \left(T^{-p}\left(T^{-\ell + 1}_{E(B_n)}\left(T^{-p}\left(Q_{k'-1}(B_n)\right)\right) \cap Q(B_n)\right)  \right) \\
			&\qquad = \mu \left(T^{-p}\left(T^{-\ell + 1}_{E(B_n)}\left(T^{-p}\left(Q_{k'-1}(B_n)\right)\right) \cap Q(B_n)\right) \cap E(B_n) \right) \\&\qquad\qquad\qquad + \mu \left(T^{-p}\left(T^{-\ell + 1}_{E(B_n)}\left(T^{-p}\left(Q_{k'-1}(B_n)\right)\right) \cap Q(B_n)\right) \cap B_n \right) \\
			&\qquad = \mu \left(E(B_n) \cap T^{-p} Q(B_n) \cap T^{-p}\left(T^{-\ell + 1}_{E(B_n)}\left(T^{-p}\left(Q_{k'-1}(B_n)\right)\right) \right)\right) \\&\qquad\qquad\qquad + \mu\left(T^{-\ell + 1}_{E(B_n)} \left(T^{-p}\left(Q_{k'-1}(B_n)\right)\right) \cap Q_1(B_n)\right)\\
			&\qquad = \sum_{j = 0}^{+\infty}  \mu \left(E(B_n) \cap T^{-p} Q_j(B_n) \cap T^{-p(j+1)}\left(T^{-\ell + 1}_{E(B_n)}\left(T^{-p}\left(Q_{k'-1}(B_n)\right)\right) \right)\right) \; \text{by immediate recursion}.\\
			&\qquad = \sum_{j = 0}^{+\infty}  \mu \left(E(B_n) \cap T^{-p} Q_j(B_n) \cap T^{-\ell + 1}_{E(B_n)}\left(T^{-p}\left(Q_{k'-1}(B_n)\right) \right)\right) \\
			&\qquad = \mu \left(E(B_n) \cap T^{-\ell + 1}_{E(B_n)}\left(T^{-p}\left(Q_{k'-1}(B_n)\right) \right)\right) 
		\end{align*}
because $\left(E(B_n) \cap T^{-p}(Q_j(B_n))\right)_{j\in \mathbb{N}}$ are disjoint.		
		\noindent Thus,
		\begin{align*}
			\mu_{E(B_n)} \left(K^{(1)}_n \geq k, \; K^{(\ell)}_n = k'\right) &\sim (1-\theta)^k \mu_{E(B_n)} \left(T^{-\ell + 1}_{E(B_n)}\left(T^{-p}\left(Q_{k'-1}(B_n)\right) \right)\right) \\
			& \sim (1 - \theta)^k \mu_{E(B_n)} \left(T^{-p}Q_{k'-1}(B_n)\right) \\
			& \sim \mathbb{P}(K^{(1)} \geq k)\mathbb{P}(K^{(\ell)} = k').
		\end{align*}
		
		\noindent Since, we already know that $K^{(1)}$ and $K^{(\ell)}$ converge, $(K^{(1)},K^{(\ell)})$ is tight and the only possible limit is the product of two independent random variables with a geometric distribution, $\mathrm{Geo}(\theta)$. 
		
		 To obtain the independence between $K^{(\ell)}$ the sizes of the cluster and the successive return times, the same proof applies. 
		  Finally, the independence and law of the successive return times is given by Lemma \ref{from_Q_to_E}. 
		  
		  Hence, we have proved the limit
		\begin{equation*}
			\widehat{N}_n \xRightarrow[n \to +\infty]{\mu_{E(B_n)}} N_{\theta, \mathrm{Geo}(\theta)} + X_1\delta_0,
		\end{equation*}
		where $N_{\theta, \mathrm{Geo}(\theta)}$ is a compound Poisson process with intensity $\theta$ and a geometric multiplicity distribution  of parameter $\theta$ and $X_1 = K^{(1)}\sim \mathrm{Geo}(\theta)$ is independent of $N_{\theta, 
		\mathrm{Geo}(\theta)}$.
	\end{proof}
	
	Consider now the REPP counting the entrances in $B_n$: 
	\[
	N_n : = \sum_{\ell \geq 1} \delta_{\left\{\mu(B_n)h_{B_n}^{(\ell)}\right\}}.
	\]
	
	\begin{thm} \label{from_E_to_B}
		We have
		\begin{equation*}
			N_n \xRightarrow[n\to +\infty]{\mu} N_{\theta, \mathrm{Geo}(\theta)},
		\end{equation*}
		where $N_{\theta, \mathrm{Geo}(\theta)}$ is a compound Poisson process with intensity $\theta$ and a geometric multiplicity distribution of parameter $\theta$.
	\end{thm}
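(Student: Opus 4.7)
The plan is to reduce Theorem~\ref{from_E_to_B} to Lemma~\ref{from_PPP_to_CPP} via two steps: first, to show that the raw visit process $N_n$ is asymptotically indistinguishable from the cluster-marked process $\widehat{N}_n$ on the continuous time axis; second, to transfer the resulting $\mu_{E(B_n)}$-convergence of $N_n$ into a $\mu$-convergence, with the Zweim\"uller / HLV passage absorbing the atom at $0$ into the first cluster of the limiting compound Poisson process.

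For the first step, observe that the $\ell$-th cluster, beginning at the entrance time $\tau_\ell := h^{(\ell)}_{E(B_n)}$, consists of $K^{(\ell)}_n$ successive hits to $B_n$ occurring at the times $\tau_\ell + p, \tau_\ell + 2p, \ldots, \tau_\ell + K^{(\ell)}_n p$. After rescaling by $\mu(B_n)$, the whole cluster sits in a window of length $p\,\mu(B_n) K^{(\ell)}_n$, which tends to $0$ in $\mu_{E(B_n)}$-probability because $K^{(\ell)}_n$ converges in distribution (hence is tight) while $p$ is fixed and $\mu(B_n) \to 0$. Using Proposition~\ref{Kall_conv_PP}, it suffices to prove the joint convergence in distribution of $(N_n(I_j))_{j=1}^{q}$ to $(N_{\theta, \mathrm{Geo}(\theta)}(I_j))_{j=1}^{q}$ under $\mu$, for intervals $I_j = [a_j, b_j)$ with $N_{\theta, \mathrm{Geo}(\theta)}(\partial I_j) = 0$ almost surely. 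An argument mirroring the proof of Lemma~\ref{from_Q_to_E} gives $\mu_{E(B_n)}\bigl((N_n(I_j))_j \neq (\widehat{N}_n(I_j))_j\bigr) \to 0$ as $n \to \infty$, since a discrepancy can only arise when a cluster straddles an endpoint $a_j$ or $b_j$, an event whose probability vanishes by the preceding observation on the shrinking cluster width.

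Combined with Lemma~\ref{from_PPP_to_CPP}, the first step yields $N_n \xRightarrow[]{\mu_{E(B_n)}} N_{\theta, \mathrm{Geo}(\theta)} + X_1 \delta_0$. The second step then invokes the classical transfer result from the conditional measure on a rare set to the invariant one (see \cite[Theorem~3.1]{Zwe16} or \cite[Theorem~1]{HLV07}). Under $\mu$, the rescaled first entrance time $\mu(B_n) h^{(1)}_{E(B_n)}$ converges in distribution to a random variable with $\mathrm{Exp}(\theta)$ law, because $\mu(E(B_n)) = \mu(Q(B_n)) \sim \theta \mu(B_n)$ and Proposition~\ref{convergence_for_Q} supplies exponential hitting-time statistics for $Q(B_n)$. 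Consequently, the atom $X_1 \delta_0$ that appears under $\mu_{E(B_n)}$ is shifted to an independent $\mathrm{Exp}(\theta)$ position under $\mu$, where it precisely plays the role of the first cluster of the target compound Poisson process $N_{\theta, \mathrm{Geo}(\theta)}$. Hence $N_n \xRightarrow[]{\mu} N_{\theta, \mathrm{Geo}(\theta)}$, which is the desired convergence.

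The main obstacle is the transfer step: one must verify that, after the Zweim\"uller passage, the position, size and subsequent dynamics of the first cluster jointly match those of the first cluster of $N_{\theta, \mathrm{Geo}(\theta)}$. This relies on the independence structure established in Lemma~\ref{from_PPP_to_CPP} between $K^{(1)}$, the cluster sizes $K^{(\ell)}$ for $\ell \geq 2$, and the successive inter-cluster return times, together with the $\mathrm{Exp}(\theta)$ limiting law of the rescaled first hitting time to $E(B_n)$ under $\mu$, which together ensure that the shifted $X_1 \delta_0$ merges seamlessly with the remaining clusters into a single compound Poisson process.
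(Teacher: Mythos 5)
Your first step---showing that $N_n$ and $\widehat{N}_n$ are asymptotically indistinguishable under $\mu_{E(B_n)}$ because the rescaled cluster width, at most $p\,K^{(\ell)}_n\mu(B_n)$, vanishes in probability by tightness of $K^{(\ell)}_n$---is exactly the first half of the paper's argument and is fine.

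The gap is in your second step. The results you invoke (\cite[Theorem~3.1]{Zwe16}, \cite[Theorem~1]{HLV07}) relate the law of a visit process under $\mu$ to its law under the measure conditioned on the \emph{same} set whose visits are counted; for $N_n$ that set is $B_n$, so what these theorems can convert into the desired $\mu$-statement is convergence under $\mu_{B_n}$, not under $\mu_{E(B_n)}$. Since $E(B_n)=T^{-p}B_n\cap B_n^{c}$ is disjoint from $B_n$, your convergence under $\mu_{E(B_n)}$ is not yet a return-process statement for $B_n$, and the assertion that ``the atom $X_1\delta_0$ is shifted to an independent $\mathrm{Exp}(\theta)$ position and merges seamlessly as the first cluster'' is a description of the conclusion of the duality, not a derivation; nothing you cite produces it from a statement conditioned on $E(B_n)$. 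The missing bridge is precisely what the paper supplies before applying the duality: $N_n$ under $T^p_{\#}\mu_{E(B_n)}$ is $N_n\circ T^p$ under $\mu_{E(B_n)}$, the shift by the fixed time $p$ disappears under the normalisation by $\mu(B_n)$, and Lemma~\ref{comp_meas_E_n_B_n} gives $T^p_{\#}\mu_{E(B_n)}(A_n)\sim\mu_{B_n}(A_n)$ for measurable $A_n\subset B_n$---a genuinely nontrivial density comparison, especially when $\zeta\in\mathcal{O}(0)$ and the invariant density has a square-root singularity at $\zeta$. With that, one gets $N_n\Rightarrow N_{\theta,\mathrm{Geo}(\theta)}+X_1\delta_0$ under $\mu_{B_n}$, and only then do the cited hitting/return equivalences legitimately yield $N_n\Rightarrow N_{\theta,\mathrm{Geo}(\theta)}$ under $\mu$. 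You identify this transfer as the main obstacle, but as written you do not close it, and your route bypasses the lemma that makes it work.
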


	\begin{proof}
		By Lemma \ref{from_PPP_to_CPP}, $\widehat{N}_n$ converges to $N_{\theta, \mathrm{Geo}(\theta)} + X_1\delta_0$ under $\mu_{E(B_n)}$.  We first show that $N_n$ converges to $N_{\theta, \mathrm{Geo}(\theta)} + X_1\delta_0$ under $\mu_{E(B_n)}$. Let $\varepsilon > 0$.  
		By Lemma~\ref{from_Q_to_E}, we can choose $k \in \mathbb{N}$ such that $\mu_{E(B_n)}(N''_n([0, \max I_j])\geq k) \leq \varepsilon$ and since $N_{\theta, \mathrm{Geo}(\theta)}(\partial I_j) = 0$ a.s for all $j \in \{1,\dots,q\}$ we can consider $\delta >0$ such that $\mu_{E(B_n)}(N''_n([a_j-\delta, a_j + \delta]) \geq 1), \mu_{E(B_n)}(N''_n([b_j-\delta, b_j + \delta]) \geq 1) \leq \varepsilon$, for every $j \in \{1,\dots,q\}$. 
		Now,
		\begin{align*}
			& \mu_{E(B_n)}\left(\left(N_n(I_1), \dots, N_n(I_q)\right) \neq (\widehat{N}_n(I_1), \dots, \widehat{N}_n(I_q)) \right) \leq \sum_{j = 1}^q \mu_{E_n}\left(N_n(I_j) \neq \widehat{N}_n(I_j)\right) \\
			& \leq \sum_{j = 1}^{q} \sum_{\ell = 0}^{k-1}\mu_{E(B_n)}\left(\mu(B_n)\RT^{(\ell)}_{E(B_n)} < a_j \leq \mu(B_n)\RT^{(\ell)}_{Q(B_n)}\right) \\&\qquad\qquad + \sum_{j = 1}^{q} \sum_{\ell = 0}^{k-1}\mu_{E(B_n)}\left(\mu(B_n)\RT^{(\ell)}_{E(B_n)} < b_j \leq \mu(B_n)\RT^{(\ell)}_{Q(B_n)}\right) + \varepsilon \\
			& \leq \sum_{j = 1}^q \sum_{\ell = 0}^{k-1} \mu_{E(B_n)}\left(\mu(B_n)\RT_{Q(B_n)} \geq \delta \right) + (2q+1)\varepsilon \\
			& \leq qk\mu_{E(B_n)}\left(\mu(B_n)\RT_{Q(B_n)}\geq \delta\right) +(2q+1)\varepsilon \\
			& \leq (2q+2)\varepsilon \quad \text{for $n$ large enough}.
		\end{align*}
		This means,
		\[N_n \xRightarrow[n\to + \infty]{\mu_{E(B_n)}} N_{\theta, \mathrm{Geo}(\theta)} + X_1\delta_0.\]
		
		\noindent Noting that $N_n$ under $T^p_{\#}\mu_{E(B_n)}$ corresponds to $N_n \circ T^p$ under $\mu_{E(B_n)}$ and by construction we do not miss any cluster, then,  since $p$ is fixed, the normalisation by $\mu(B_n)$ makes the difference disappear asymptotically. Thus, 
		
		\[N_n \xRightarrow[n\to + \infty]{T^p_{\#}\mu_{E(B_n)}} N_{\theta, \mathrm{Geo}(\theta)} + X_1\delta_0.\]
		By Lemma \ref{comp_meas_E_n_B_n}, we have $T^p_{\#}\mu_{E(B_n)}(A_n) \sim \mu_{B_n}(A_n)$ for every sequence $A_n \in \mathcal{F} \cap B_n$. Thus, 
		
		\begin{align*}
			&\mu_{B_n}\left((N_n(I_1), \dots, N_n(I_q)) = (k_1, \dots,k_q)\right)\\
			&  \sim T^p_{\#}\mu_{E(B_n)} \left((N_n(I_1), \dots, N_n(I_q)) = (k_1, \dots,k_q)\right)\\
			& \xrightarrow[n\to +\infty]{} \mathbb{P}\left((N_{\theta, \mathrm{Geo}(\theta)}(B_1), \dots, N_{\theta, \mathrm{Geo}(\theta)}(B_q)) = (k_1, \dots,k_q)\right),
		\end{align*}
		
		This proves the convergence of $N_n$ to $N_{\theta, \mathrm{Geo}(\theta)} + X_1\delta_0$ under $\mu_{B_n}$. Again by equivalence between hitting and return time processes (\cite[Theorem 3.1]{Zwe16} or \cite[Theorem 1]{HLV07}), it is also equivalent to the convergence of $N_n$ under $\mu$ and we have 
		\[
		N_n \xRightarrow[]{\;\mu\;\,} N_{\theta, \mathrm{Geo}(\theta)}.
		\]
	\end{proof}

	\section{Dichotomy for doubly intermittent maps}
	\label{sec:doubly-intermittent}
	
	\noindent The purpose of this section is to establish the dichotomy for doubly intermittent full branch maps having neutral points at both ends of the intervals. We recall first the properties of these maps. See \cite{CLM22} for more details.
	
	\begin{defn}
	Let $I = [-1,1]$, $I_- = [-1,0]$ and $I_+ = [0,1]$.\\
	We assume:
	\begin{enumerate}[label = (A\arabic*)]
		\item $T : I \to I$ is full branch, the restricitions $T_{\pm} : I_{\pm} \to I$ are orientation preserving $C^2$ diffeomorphisms and the only fixed points are the endpoints of $I$. 
		\item There exists $\ell_1,\ell_2 \geq0,\; k_1,k_2,a_1,a_2,b_1,b_2 > 0$ such that
		\begin{enumerate}[label = (\roman*)]
			\item if $\ell_1,\ell_2 \neq 0$ and $k_1,k_2 \neq 1$, then
			\begin{equation*}
				Tx =  \begin{cases} 
					x+ b_1(1+x)^{(1+\ell_1)} & \text{in} \; U_{-1}, \\
					1 - a_1|x|^{k_1} & \text{in} \; U_{0-}, \\
					-1 + a_2x^{k_2} & \text{in} \; U_{0+}, \\
					x - b_2(1-x)^{1+\ell_2} & \text{in}\; U_{-1},
				\end{cases}
			\end{equation*}
			\noindent where $U_{0-} := (-\iota, 0]$, $U_{0+} = [0,\iota)$ and $U_{\pm 1} := T(U_{0\pm})$.
			\item If $\ell_1 = 0$ and/or $\ell_2 = 0$, 
			\begin{equation*}
				T_{|U_{\pm 1}}x := \pm 1 + (1+b_1)(x+1) \mp \xi(x),
			\end{equation*}
			where $\xi$ is $C^2$. \\
			If $k_1 = 1$ and/or $k_2 = 1$, we only ask $T'(0_-) = a_1 > 1$ and/or $T'(0_+) =a_2 >1$ and $T$ is monotone in the corresponding meighbourhood.
		\end{enumerate}  
	\end{enumerate}
	
	\noindent We define 
	\begin{equation*}
		\Delta_0^- := T^{-1}(0,1) \cap I_{-} \quad \text{and} \quad \Delta_{0}^+ := T^{-1}(-1,0) \cap I_{+},
	\end{equation*}
	and by recursion 
	\begin{equation*}
		\Delta_n^- := T^{-1}(\Delta_{n-1}^-) \cap I_{-} \quad \text{and} \quad \Delta_{n}^+ := T^{-1}(\Delta_{n-1}^+) \cap I_{+}.
	\end{equation*}
	By construction and hypothesis on $T$, $\{\Delta_{n}^{\pm}\}_{n\geq 0}$ is a partition of $I_{\pm}$. Furthermore, we define
	\begin{equation*}
		\delta_n^- := T^{-1}(\Delta_n^+) \cap \Delta_0^{-} \quad \text{and} \quad \delta_n^+ := T^{-1}(\Delta_n^-) \cap \Delta_0^{+}.
	\end{equation*}
	This time, $\{\delta_n^{\pm}\}_{n\geq 1}$ is a partition of $\Delta_{0}^{\pm}$ and we have $T^n(\delta_n^{\pm}) = \Delta_0^{\mp}$. Let $n_{\pm} := \min \{n\; |\; \delta_{n}^{\pm} \subset U_{0\pm}\}$. We also assume\\
	\noindent (A2) There exists $\lambda >1$ such that for all $1\leq n\leq n_{\pm}$, for all $x \in \delta_{n}^{\pm}$, we have $(T^n)'(x) > \lambda$. 
	\end{defn}

	\noindent We denote $\widehat{\mathfrak{F}} := \{T : I \to I,\; T \;\text{satisfies (A0)-(A2)}\}$. \\
	\noindent Let $\beta := \beta_1 \vee \beta_2$ with $\beta_1 := k_1\ell_1$ and $\beta_2:= k_2\ell_2$. We define $\mathfrak{F} := \{T \in \widehat{\mathfrak{F}}: \beta < 1\}$.

	\begin{prop} \cite[Theorem B]{CLM22}\label{existence_acip}
		For all $T \in \mathfrak{F}$, $T$ admits an ergodic invariant probability $\mu$ equivalent to $\leb$ and bounded away from $0$ and $+\infty$ on $\Delta_{0}^{-} \cup \Delta_{0}^+$. In particular, we have $\leb_{|\Delta_{0}^{-} \cup \Delta_{0}^+} \asymp \mu_{\Delta_{0}^{-} \cup \Delta_{0}^+}$.
	\end{prop}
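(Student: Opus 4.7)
The plan is to follow the classical inducing strategy for intermittent maps, building a Gibbs–Markov induced system on the reference set $\Delta_0:=\Delta_0^-\cup\Delta_0^+$ and then pulling back the resulting acip through the Young tower; the exponent condition $\beta<1$ is used only at the very end, to guarantee integrability of the return time. More precisely, I would first define the induced map $F:=T_{\Delta_0}$ using the partition $\{\delta_n^\pm\}_{n\geq 1}$ and the fact that $T^n(\delta_n^\pm)=\Delta_0^\mp$; by condition (A2), $F$ is uniformly expanding (the first $n_\pm$ iterates have derivative $>\lambda$, and subsequent iterates only add extra expansion because the map is expanding away from the neutral fixed points), and the $C^2$ regularity of $T$ combined with standard Koebe-type estimates along the orbits through the hyperbolic region yields bounded distortion for $F$ on each element of the partition.

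Next, I would apply a Lasota–Yorke / Rychlik type argument to the transfer operator of $F$ with respect to Lebesgue measure on $\Delta_0$. Since $F$ is full-branch, uniformly expanding and has bounded distortion on a countable Markov partition, the operator admits a spectral gap on an appropriate function space (e.g.\ BV or Lipschitz densities), producing a unique $F$-invariant probability $\nu$ absolutely continuous with respect to $\leb_{\Delta_0}$ whose density is uniformly bounded away from $0$ and $+\infty$; this in particular gives $\nu\asymp\leb_{\Delta_0}$.

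Then I would construct the $T$-invariant measure via the tower formula
\begin{equation*}
\tilde{\mu}(B)\;=\;\sum_{n\geq 0}\;\nu\bigl(\{x\in\Delta_0\colon r_{\Delta_0}(x)>n\}\cap T^{-n}B\bigr),
\end{equation*}
which is automatically $T$-invariant and $\sigma$-finite. The crucial step is to check that $\tilde{\mu}(I)<\infty$, i.e.\ that $\int_{\Delta_0} r_{\Delta_0}\,d\nu<\infty$. Since $\nu\asymp\leb$ on $\Delta_0$, this reduces to showing $\sum_n n\,\leb(\delta_n^\pm)<\infty$. A careful local computation near the neutral fixed points, using the explicit form of $T$ in $U_{\pm 1}$ (exponents $\ell_1,\ell_2$) together with the power-law behavior of the critical branches in $U_{0\pm}$ (exponents $k_1,k_2$), shows that $\leb(\delta_n^\pm)\asymp n^{-(1+1/\beta_i)}$ for the relevant side, and hence $\sum n\,\leb(\delta_n^\pm)<\infty$ precisely because $\beta=\beta_1\vee\beta_2<1$. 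Normalising gives the probability measure $\mu$.

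Finally, equivalence of $\mu$ and $\leb$ on the whole of $I$ follows from the full-branch property together with $\mu\asymp\leb$ on $\Delta_0$: every point eventually visits $\Delta_0$ and $T$ is nonsingular, so spreading $\nu$ through the tower produces a density bounded away from $0$ and $\infty$ on $\Delta_0$ (yielding the last assertion) and strictly positive everywhere else. Ergodicity of $\mu$ is inherited from the ergodicity of $(F,\nu)$, which is itself a consequence of the spectral gap for the transfer operator of $F$. The main obstacle I anticipate is the estimate $\sum n\,\leb(\delta_n^\pm)<\infty$: controlling $\leb(\delta_n^\pm)$ requires iterating the inverse branches of $T$ through both the neutral region (where expansion is degenerate) and the critical region (where the $k_i$-th root behaviour distorts lengths), and the sharp threshold $\beta<1$ emerges only after combining these two asymptotics carefully.
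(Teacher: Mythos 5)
The statement you address is not actually proved in this paper: it is imported verbatim from \cite[Theorem B]{CLM22}, so the only meaningful comparison is with the argument of that reference. Your outline follows the same strategy used there — first return map to $\Delta_0^-\cup\Delta_0^+$, Gibbs--Markov/Rychlik structure with bounded distortion, an invariant density for the induced map bounded away from $0$ and $+\infty$, the tower (pull-back) formula, and integrability of the return time via the tail asymptotics, with $\beta<1$ appearing exactly at the summability threshold.

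There is, however, a genuine gap at the step you dispose of in one parenthesis: uniform expansion (and bounded distortion) of the induced map. Condition (A2) only controls the finitely many branches $\delta_n^\pm$ with $1\le n\le n_\pm$; for $n>n_\pm$ the branch $T^n|_{\delta_n^\pm}$ starts inside $U_{0\pm}$, where $T$ has a critical point at $0$ whenever $k_i>1$ (there $|T'(x)|\asymp |x|^{k_i-1}\to 0$), so it is simply false that ``subsequent iterates only add extra expansion because the map is expanding away from the neutral fixed points.'' The expansion of those branches comes from balancing the derivative collapse at the critical point against the derivative accumulated while the image orbit escapes the neutral neighbourhood $U_{\mp 1}$, and the same interplay (blow-up of $T''/T'$ near $0$, only sub-exponential escape from $\pm 1$) is what makes the distortion estimate non-trivial; carrying this out is the main technical content of \cite{CLM22}, and it is precisely where the combined exponents $\beta_i=k_i\ell_i$ — hence the asymptotic $\leb(\delta_n^\pm)\approx n^{-(1+1/\beta_i)}$ that you invoke — come from, so this step cannot be treated as routine. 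Two smaller points: the induced map is not full branch onto $\Delta_0^-\cup\Delta_0^+$ (each $\delta_n^\pm$ maps onto $\Delta_0^\mp$), so you need the Gibbs--Markov/finite-image framework rather than a genuinely full-branch one; and ergodicity and equivalence of $\mu$ with $\leb$ on all of $I$ require the usual saturation argument through the tower, not just the spectral gap of the induced transfer operator, though that part is indeed standard.
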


	\begin{prop}
		For $T \in \mathfrak{F}$, $T_{\Delta_0^{\pm}}$ is a first return Gibbs-Markov and Rychlik map.
	\end{prop}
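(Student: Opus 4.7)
The plan is to identify explicitly the partition of $\Delta_0^-$ (the argument for $\Delta_0^+$ being symmetric) adapted to the first return to $\Delta_0^-$, and then verify the Markov, uniform expansion, bounded distortion, and bounded variation properties on each partition element using the intermittent structure of $T$ and the tail estimates implicit in the assumption $\beta < 1$.

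First I would describe the first return map explicitly. Given $x \in \delta_n^-$, the full branch assumption gives $T(\delta_n^-) = \Delta_n^+$ and $T(\Delta_k^+) = \Delta_{k-1}^+$ for all $k \geq 1$, hence $T^{n+1}(x) \in \Delta_0^+$. If moreover $T^{n+1}(x) \in \delta_m^+$, then by the same reasoning $T^{n+m+2}(x) \in \Delta_0^-$. This produces the countable partition
$$\{\delta_{n,m}^- := \delta_n^- \cap T^{-(n+1)}(\delta_m^+)\}_{n, m \geq 1}$$
of $\Delta_0^-$ on which the induced map is $T_{\Delta_0^-} = T^{n+m+2}$. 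That $n+m+2$ is indeed the first return time follows immediately from the partition property of $\{\Delta_k^{\pm}\}_{k \geq 0}$: each intermediate iterate $T^j(x)$, $1 \leq j \leq n+m+1$, lies in some $\Delta_k^{\pm}$ with $k \geq 1$ and is thus outside $\Delta_0^-$.

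The Markov property then follows directly from the full branch hypothesis: each of the elementary steps ($T|_{\delta_n^-}$, then the $n$ iterates in $I_+$, then $T|_{\delta_m^+}$, then the $m$ iterates in $I_-$) is a $C^2$ diffeomorphism onto its image, so $T_{\Delta_0^-}|_{\delta_{n,m}^-}$ is a $C^2$ diffeomorphism onto $\Delta_0^-$. Uniform expansion is obtained by combining (A2), which yields $(T^n)' > \lambda$ on $\delta_n^{\pm}$ for $1 \leq n \leq n_{\pm}$, with the polynomial expansion estimates near the neutral fixed points supplied by the intermittent framework of \cite{CLM22} for $n > n_{\pm}$; chaining the two legs then gives $(T^{n+m+2})' > \lambda^2$ on each $\delta_{n,m}^-$. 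Bounded distortion on each $\delta_{n,m}^-$ is a standard consequence of the $C^2$ regularity of $T$ together with Koebe-type inequalities controlling the iterated derivatives near the two neutral fixed points.

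Finally, to obtain the Rychlik property, I would verify that $1/|T_{\Delta_0^-}'|$ has bounded variation on $\Delta_0^-$. Since each $\delta_{n,m}^-$ is an interval, the total variation decomposes as a sum over $(n,m)$ dominated by the sup-norm of $1/|(T^{n+m+2})'|$ on each partition element, and summability of this series is ensured by the polynomial tails of the Lebesgue measures of $\delta_n^{\pm}$, which fall within the integrable range precisely under the assumption $\beta < 1$ defining $\mathfrak{F}$. Combined with Proposition \ref{existence_acip}, which gives $\leb \asymp \mu$ on $\Delta_0^{\pm}$, this yields the full Rychlik structure. The main obstacle will be this last step: while the Markov and expansion parts reduce to chain-rule bookkeeping, the bounded variation estimate requires sharp quantitative control of the derivatives of iterates near the neutral fixed points, and it is precisely the summation over the countable family $\{\delta_{n,m}^-\}$ that forces the restriction $\beta < 1$.
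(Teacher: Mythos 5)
The paper offers no proof of this proposition: it is recorded as one of the ``properties of these maps'' imported from \cite{CLM22}, so there is no internal argument to compare yours against. Judged on its own terms, your sketch gets the combinatorics right: the two-leg decomposition of the first return (descend through the $\Delta_k^+$'s, cross, descend through the $\Delta_k^-$'s), the resulting countable full-branch partition $\{\delta_{n,m}^-\}$, the check that intermediate iterates avoid $\Delta_0^-$, and the reduction of ``Gibbs--Markov and Rychlik'' to Markov structure, uniform expansion, bounded distortion and bounded variation of $1/|T_{\Delta_0^-}'|$ form the correct skeleton. (Minor point: the paper's indexing of $\delta_n^\pm$ is internally off by one --- it asserts $T^n(\delta_n^\pm)=\Delta_0^\mp$ while the displayed definition yields $T^{n+1}$ --- so you should verify that your range $n,m\ge 1$ really exhausts $\Delta_0^-$.)

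Two substantive criticisms. First, the entire analytic content of the proposition sits in the two estimates you defer to ``the intermittent framework of \cite{CLM22}'': uniform expansion and bounded distortion of the composed branch. This is not chain-rule bookkeeping, because when $k_1>1$ the map has a genuine critical point at $0$ (indeed $T'(x)=a_1k_1|x|^{k_1-1}\to 0$ as $x\to 0^-$), so the first step of a long branch $\delta_n^-$ contracts by a power of $n$, and one must show that the subsequent slow expansion accumulated along the neutral orbit of $\pm1$ (where $T'=1+O((1\mp x)^{\ell_i})$) more than compensates, uniformly in $n$; your $\lambda^2$ chaining is fine only once each leg for $n>n_\pm$ is handled, and that is precisely the hard part. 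Second, your diagnosis of where $\beta<1$ enters is wrong. Once bounded distortion is available, $\sup_{\delta_{n,m}^-}1/|(T^{n+m+2})'|\asymp \leb(\delta_{n,m}^-)/\leb(\Delta_0^-)$, and these quantities sum to $1$ over the partition of the finite interval $\Delta_0^-$, so the Rychlik/BV summation converges for every value of $\beta$ --- no tail estimate is needed there. The hypothesis $\beta<1$ is instead what makes the return time integrable ($\sum_j k(j)\leb(I_j)<\infty$), i.e.\ it is responsible for Proposition~\ref{existence_acip} (finiteness of the acip) and for the tail bound in Proposition~\ref{return_times}, not for the Gibbs--Markov/Rychlik structure of the induced map itself.
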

	 
	\begin{prop} \label{return_times}
		If $\beta = 0$, then $\mu_{\Delta_{0}^{-}}(\RT_{\Delta_{0}^{-}}> t)$ decays exponentially as $t \to +\infty$. If $\beta >0$,  there exists $C > 0$ such that
		\begin{equation*}
			\mu_{\Delta_{0}^{-}}(\RT_{\Delta_{0}^{-}}> t) \leq Ct^{-1/\beta}.
		\end{equation*}
	\end{prop}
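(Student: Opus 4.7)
The plan is to partition $\Delta_0^-$ according to the itinerary of the first return and to estimate the measure of each cell by combining local asymptotics near the neutral fixed points with the Gibbs--Markov distortion control provided by the previous proposition.

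For any $x\in\Delta_0^-$, the orbit must first enter $I_+$ at step $1$, landing in some $\Delta_n^+$; remain in $I_+$ for $n$ additional steps until reaching $\Delta_0^+$; then enter $I_-$ at step $n+2$, landing in some $\Delta_m^-$; and finally transit through the levels $\Delta_{m-1}^-,\dots,\Delta_0^-$ for $m$ further steps. Writing $\delta_{n,m}^- := \delta_n^- \cap T^{-(n+1)}(\delta_m^+)$, the family $\{\delta_{n,m}^-\}_{n,m\geq 0}$ partitions $\Delta_0^-$ modulo a null set, and on each such cell the first return time to $\Delta_0^-$ equals $n+m+2$.

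The key scaling estimates go as follows. First, iterating the explicit formula $T(y)=y+b_1(1+y)^{1+\ell_1}$ near $-1$ (and the analogous formula near $+1$) yields the standard Pomeau--Manneville asymptotics
\begin{equation*}
\mathrm{dist}(\Delta_n^\pm,\mp 1)\asymp n^{-1/\ell_i},\qquad |\Delta_n^\pm|\asymp n^{-1/\ell_i-1}.
\end{equation*}
Second, the explicit critical expansion $T(x)=\pm 1\mp a_i|x|^{k_i}$ near $0^\pm$ combined with a direct change of variables gives the corresponding scaling for $|\delta_n^\pm|$. Third, bounded distortion of the Markov iterate $T^{n+1}\colon\delta_n^-\to\Delta_0^+$, which is uniformly expanding by assumption (A2), factorises the cells as
\begin{equation*}
|\delta_{n,m}^-|\asymp \frac{|\delta_n^-|\cdot|\delta_m^+|}{|\Delta_0^+|}.
\end{equation*}
Summing this product over $n+m\geq t-2$ and invoking Proposition~\ref{existence_acip} to replace Lebesgue by $\mu_{\Delta_0^-}$ on the base yields, via the standard convolution rule for heavy-tailed sequences, the claimed upper bound $Ct^{-1/\beta}$.

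For the exponential case $\beta=0$, hypothesis (A1) forces $T$ to be genuinely expanding in neighbourhoods of $\pm 1$ (with $T'(\pm 1)=1+b_i>1$), so the induced map is uniformly expanding with long branches $\delta_n^\pm$ whose Lebesgue measure decays exponentially; classical Rychlik / Gibbs--Markov theory then delivers exponential return-time tails. The main technical obstacle will be maintaining uniform distortion across the critical point $0$, where $T'$ is either unbounded (when $k_i<1$) or vanishing (when $k_i>1$); assumption (A2), which ensures uniform expansion over one complete return period, is precisely the input required to apply Koebe-type distortion estimates on each full branch $\delta_n^-$, after which the above decoupling of the two excursions becomes rigorous.
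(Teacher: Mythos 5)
First, context: the paper itself gives no proof of Proposition~\ref{return_times} — like the two neighbouring propositions it is imported from \cite{CLM22} — so your argument can only be measured against the standard route, which is indeed the one you follow (excursion decomposition, Pomeau--Manneville level asymptotics, distortion, convolution of tails). Your decomposition is sound: the cells $\delta_{n,m}^-$ do partition $\Delta_0^-$ mod $\leb$ and the first return time is constant equal to $n+m+2$ on each of them (up to the paper's indexing convention, which only shifts constants).

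The genuine gap sits in the last step and is hidden precisely because you never write the scalings of $|\delta_n^\pm|$ explicitly. With the formulas of (A1) as stated, $\Delta_n^+$ lies at distance $\asymp n^{-1/\ell_2}$ from $+1$, and $\delta_n^-$ is its preimage under the branch $x\mapsto 1-a_1|x|^{k_1}$ at $0^-$, so $\leb\big(\bigcup_{j\ge n}\delta_j^-\big)\asymp n^{-1/(k_1\ell_2)}$; symmetrically $\leb\big(\bigcup_{j\ge m}\delta_j^+\big)\asymp m^{-1/(k_2\ell_1)}$. The exponents produced by your convolution are therefore the \emph{cross} products: the bound you actually obtain is $Ct^{-1/\max\{k_1\ell_2,\,k_2\ell_1\}}$, because an orbit leaving $\Delta_0^-$ enters the neutral regime at $+1$ (order $\ell_2$) through the critical order $k_1$, and vice versa. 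This coincides with the claimed $Ct^{-1/\beta}$, $\beta=\max\{k_1\ell_1,k_2\ell_2\}$, only in symmetric situations (e.g.\ $k_1=k_2$ or $\ell_1=\ell_2$); in general $\max\{k_1\ell_2,k_2\ell_1\}$ may strictly exceed $\max\{k_1\ell_1,k_2\ell_2\}$, so the sentence ``yields the claimed upper bound $Ct^{-1/\beta}$'' does not follow from your computation. You should have carried the exponents through and either proved the statement with the $\beta$ your argument actually delivers or flagged the mismatch (very likely a discrepancy between the labelling in (A1) here and the definition of $\beta_1,\beta_2$ in \cite{CLM22}); as written, the conclusion is unjustified. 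Two lesser points: the uniform-in-$n$ distortion of $T^{n+1}\colon\delta_n^-\to\Delta_0^+$, on which your product formula $|\delta_{n,m}^-|\asymp|\delta_n^-|\,|\delta_m^+|/|\Delta_0^+|$ rests, is only asserted — (A2) controls finitely many branches and the quoted Gibbs--Markov property concerns full first-return branches, not this half-excursion map — so a genuine distortion estimate (the technical core of \cite{CLM22}) still has to be supplied, or bypassed, e.g.\ by bounding $\mu\big(\bigcup_{m>t/2}\delta_m^+\big)$ using invariance of $\mu$ under the first return map to $\Delta_0^-\cup\Delta_0^+$; and in the case $\beta=0$, ``classical Rychlik/Gibbs--Markov theory delivers exponential return-time tails'' is a non sequitur (tail behaviour is a geometric property of the inducing scheme, not a consequence of the induced map's spectral properties), although the exponential shrinking of the levels that you also invoke does give the right reason.
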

	\begin{rem}
	 	The advantage of this symmetrical construction on $\Delta_0^-$ and $\Delta_0^+$ is that we can consider the appropriate induction set depending on the relative position of $\zeta$.
	\end{rem}
	\begin{rem}
		For the notations, we will use $T_{-} : I_{-} \to I$ the left branch of $T$ and $T_{+} : I_+ \to I$ the right-branch. Both of them are continuous, increasing, one to one and onto.
	\end{rem}
	
	\noindent Now, we are able to formulate the dichotomy theorem for doubly intermittent full branch maps.
	
	\begin{thm} \label{Dichotomy_double_intermittent}
		Let $T \in \mathfrak{F}$ and $\mu$ be its associated acip. Let $\varphi : I \to \mathbb{R}$ be a distance observable achieving a maximum at $\zeta \in I$. Let $N_n$ be the REPP associated to $\varphi$ and $(u_n)_{n\in\N}$ a sequence such that $B_n := \{\varphi > u_n\}$ satisfies $n\mu(B_n) \xrightarrow[n\to +\infty]{} \tau > 0$. Then, 
		\begin{enumerate}[label = (\roman*)]
			\item If $\zeta \in (-1,1)$ and $\zeta$ not periodic, $N_n$ converges in distribution to $N$ an homogeneous Poisson Process with intensity 1.
			\item If $\zeta \in (-1,1)$ is periodic of period $p$, then $N_n$ converges to $N_{\theta, \mathrm{Geo}(\theta)}$ a compound Poisson Process with intensity $\theta = 1 - |(T^p)'(\zeta)|^{-1}$ and multiplicity distribution function $\pi$ given by $\pi(k) = \theta(1-\theta)^{k-1}$ for $k\geq 1$.
			\item If $\ell_1,\ell_2 \neq 0$ and $\zeta \in \{-1,1\}$, $N_n$ does not converges but $N_{Q(B_n)}$ converges to an homogeneous Poisson process of intensity $1$. We still have the convergence of the Hitting and Return Time Statistics but with another renomarlisation.
		\end{enumerate}
	\end{thm}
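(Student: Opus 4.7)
The plan is to apply the abstract results of Section~\ref{sec:abtract-result} in direct analogy with the proof of Theorem~\ref{Dicho_Mis}, exploiting the fact that the induced maps $T_{\Delta_0^{\pm}}$ are Rychlik Gibbs--Markov first return maps (hence $\phi$-mixing with exponential rate at the cylinder level). For any $\zeta\in(-1,1)$, I choose the inducing base $A$ to be one of $\Delta_0^-$, $\Delta_0^+$ according to the relative position of $\zeta$ (this is precisely the flexibility emphasised in the remark after Proposition~\ref{return_times}). Since the neutral fixed points $\pm 1$ are the only accumulation points of the partition pieces $\Delta_n^{\pm}$, one can arrange that either $\mathcal{O}(\zeta)\cap \mathring A\neq\emptyset$ or $\mathcal{O}(\zeta)\cap A=\emptyset$, which places us in the setting of Corollary~\ref{when_hits_the_reference_set} or Corollary~\ref{when_does_not_hit} respectively.

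For both corollaries the only non-trivial hypothesis to verify is the speed condition $\mu(B'_n)\,r_{B_n}\to 0$ in probability. This follows from the polynomial tail estimate $\mu_A(r_A>t)\leq C t^{-1/\beta}$ of Proposition~\ref{return_times}: letting $\Lambda_t:=\{x\in A : r_A(x)>t\}$, the same set-theoretic argument as in Lemma~\ref{Arriving_fast_enough} gives $B'_n\cap\{r_{B_n}\geq \varepsilon/\mu(B'_n)\}\subset \Lambda_{\varepsilon/\mu(B'_n)}$, so using $\mu(B_n)\sim\tau/n$ and $\beta<1$ one obtains $\mu_{B'_n}(r_{B_n}\geq \varepsilon/\mu(B'_n))\leq C\mu(B'_n)^{1/\beta-1}\to 0$. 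Once the hypotheses of the corollaries are secured, the problem is reduced to the convergence of the induced REPP on $A$ for the shadow $B'_n$, which I handle as in Propositions~\ref{Dicho_hits} and~\ref{Dicho_does_not_hit}: approximate $B'_n$ by unions of $v_n$-cylinders $U^{\pm}(B'_n,v_n)$ with $\ln n=o(v_n)$ via Lemma~\ref{cyl_mixing}, and verify conditions $\DD_q(u_n)^*$ and $\DD'_q(u_n)^*$ with $q=0$ in the non-periodic case and $q=\mathrm{Card}(\mathcal{O}(\zeta)\cap A)$ in the periodic case, exactly as before. The periodic case~(ii) is then completed by the cluster reconstruction of Section~\ref{reconstruction_cluster}, which is general and only uses local regularity of the density (provided here by Proposition~\ref{existence_acip}) together with the $C^2$ structure of $T$ away from $\{-1,1\}$; because the density is continuous at interior periodic points, no square-root correction appears and one obtains the unadjusted intensity $\theta=1-|(T^p)'(\zeta)|^{-1}$.

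The hardest case is~(iii), where $\zeta\in\{-1,1\}$ coincides with a neutral fixed point. By symmetry, assume $\zeta=1$ and induce on $A=\Delta_0^-$. Since $\mathcal{O}(\zeta)=\{\zeta\}$ never meets $A$, we are in the regime of Corollary~\ref{when_does_not_hit} with $Q(B_n)=B_n\cap T^{-1}(B_n^c)$. The shadow of $Q(B_n)$ in $\Delta_0^-$ again consists of preimages routed through the slow branch, and on it I establish a standard Poisson limit for the induced REPP by the same cylinder-approximation and $\phi$-mixing argument. The main obstacle is hypothesis~(1) of Corollary~\ref{when_does_not_hit}, namely $\mu(B'_n)\,r_{Q(B_n)}\to 0$, because points escaping slowly from the neutral branch near $1$ may require very many iterates before entering $\Delta_0^-$; this must be controlled explicitly from the local form $Tx=x-b_2(1-x)^{1+\ell_2}$, which gives a polynomial escape-time bound $r_{Q(B_n)}\lesssim \mathrm{diam}(B_n)^{-\ell_2}$, and combined with Proposition~\ref{return_times} (applied to $\Delta_0^-$, whose tail is governed by $\beta_1$) yields the required smallness when $\beta<1$. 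Unlike the interior periodic case, the local dynamics around a neutral fixed point is not linearisable and the cluster-size distribution depends in a non-trivial way on $\ell_1,\ell_2$; this prevents convergence of $N_n$ itself under the $\mu(B_n)$-normalisation and explains why only the convergence of $N_{Q(B_n)}$ is stated. The HTS/RTS assertions in~(iii) then follow from the convergence of the escape-annulus REPP by the classical passage \cite[Theorem 3.1]{Zwe16}, with the appropriate renormalisation $\mu(Q(B_n))$ replacing $\mu(B_n)$.
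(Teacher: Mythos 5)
Your top-level strategy coincides with the paper's: choose the base $\Delta_0^{\mp}$ according to the side of $\zeta$, verify the speed condition through the polynomial tail of Proposition~\ref{return_times} together with $\beta<1$ (your computation is exactly Lemma~\ref{condition_1_thm}), and reduce to the convergence of the induced REPP for the shadow set. For interior points you then take a genuinely different route: you transplant the Misiurewicz cylinder machinery ($U^{\pm}(B'_n,v_n)$, Lemma~\ref{cyl_mixing}, conditions $\DD_q(u_n)^*$ and $\DD'_q(u_n)^*$), whereas the paper truncates the countable union $B'_n=\bigcup_k B'_n(\alpha_k)$ to its first $N(n)$ intervals (Lemma~\ref{truncation_approximation}) and verifies BV-type conditions in the spirit of \cite{AFFR17}, using $\|\mathds{1}_{Q(\tilde B'_n)}\|_{BV}\lesssim N(n)$. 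Your alternative is plausible, since the induced maps are $\phi$-mixing and each $B'_n(\alpha_k)$ lies in a distinct $1$-cylinder, but it still requires the special treatment of the subcases $T^s\zeta\in\partial A$ and $T^s\zeta=0$, where $T^sB_n\not\subset A$ and the paper must route one half of $B_n$ through the induced system on the opposite side. Note also that for an interior periodic $\zeta$ the orbit necessarily meets $\mathring A$, so the compound Poisson limit already follows from $\DD_q^*$ with $q=\mathrm{Card}(\mathcal O(\zeta)\cap A)>0$ together with Corollary~\ref{when_hits_the_reference_set}; the reconstruction of Section~\ref{reconstruction_cluster} presupposes $\mathcal O(\zeta)\cap A=\emptyset$ and is neither needed nor applicable in case (ii), so invoking it there is a (harmless) misstep.

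There are two genuine gaps. First, you never treat $\zeta=0$, which belongs to case (i) but fits neither corollary: whichever base $A=\Delta_0^{\pm}$ you pick, $B_n$ straddles $0$, so $\mu(B_n\cap A)>0$ and $\mu(B_n\cap A^c)>0$ simultaneously, and an orbit leaving $B_n\cap A$ can revisit the other half of $B_n$ before returning to $A$. The paper must go back to Theorem~\ref{abstract-theorem} and prove hypothesis \eqref{hyp:2} directly (Lemma~\ref{condition_2_zeta=0}), using bounded distortion of the induced system on the opposite side to show $\mu_{B_n}(r_{B_n}<r_A)=\mu_{B_n}((B_n^-)'\cap B_n^+)=o(1)$. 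Second, in case (iii) your verification of hypothesis (1) of Corollary~\ref{when_does_not_hit} through the uniform escape-time bound $r_{Q(B_n)}\lesssim\mathrm{diam}(B_n)^{-\ell_2}$ fails: points of $B'_n$ whose image lands arbitrarily close to the neutral fixed point have unbounded sojourn time in $B_n$, so no such deterministic bound holds; and even the heuristic product $\mu(B'_n)\cdot r_n^{-\ell_2}$, which is of order $r_n^{1/k_1-\ell_2}$ for $\zeta=1$, need not vanish under the standing hypothesis $\beta=\max(k_1\ell_1,k_2\ell_2)<1$ (take $k_1=2$, $\ell_2=0.8$, $\ell_1$ and $k_2$ small). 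No pointwise escape-time estimate is needed: by definition of the shadow, $B'_n\cap\{r_{Q(B_n)}\geq K\}\subset A\cap\{r_A\geq K\}$, because the orbit cannot return to $\Delta_0^{\mp}$ while it lingers in $B_n$ on the other side of the interval; hence Lemma~\ref{condition_1_thm}, applied verbatim to the rare events $Q(B_n)$ and resting only on Proposition~\ref{return_times}, already yields $\mu(B'_n)\,r_{Q(B_n)}\to0$ in $\mu_{B'_n}$-probability. That is the argument that closes under $\beta<1$, and it is the one the paper uses.
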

	
	\begin{rem}
		When $\zeta \in \Delta_{0}^- \cup \Delta_{0}^+$, we can direclty use the induction results so there is nothing to prove. However, we do not need to make this distinction as points in $\Delta_{0}^- \cup \Delta_{0}^+$ are carried the same way in the proof.
	\end{rem}

	\begin{lem} \label{condition_1_thm}
		Let $\zeta \notin \Delta_{0}^-$. Then, for a sequence $(B_n)_n$ of shrinking balls to $\zeta$, we have 
		\begin{equation*}
			\mu(B'_n)\,\RT_{B_n} \xrightarrow[]{\;\mu_{B'_n}\;\,} 0.
		\end{equation*}
	\end{lem}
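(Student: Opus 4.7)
The plan is to mimic the proof of Lemma~\ref{Arriving_fast_enough} from the Misiurewicz section, replacing the exponential tail estimate of property~\eqref{Exponential_decay_k(j)} by the polynomial tail provided by Proposition~\ref{return_times}. The inducing set is now $A = \Delta_0^-$, and we let
\[
\Lambda_n := \{x \in \Delta_0^- \colon \RT_{\Delta_0^-}(x) > n\}.
\]

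First I would observe the shadow–set containment. If $x \in B'_n$, then, by definition \eqref{eq:B-prime-definition}, there exists $k \geq 0$ with $x \in \Delta_0^- \cap \{\RT_{\Delta_0^-} > k\} \cap T^{-k}(B_n)$. In particular $T^k x \in B_n$, so $\RT_{B_n}(x) \leq k$, and $\RT_{\Delta_0^-}(x) > k \geq \RT_{B_n}(x)$. Consequently, for every $t > 0$,
\[
B'_n \cap \{\RT_{B_n} \geq t\} \subset \Lambda_t.
\]
Taking $t = \varepsilon/\mu(B'_n)$, this reduces the lemma to showing
\[
\frac{\mu(\Lambda_{\varepsilon/\mu(B'_n)})}{\mu(B'_n)} \xrightarrow[n\to+\infty]{} 0.
\]

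Next I would invoke the tail bound. Since $\mu$ is equivalent to Lebesgue on $\Delta_0^-$ (Proposition~\ref{existence_acip}) and $\mu_{\Delta_0^-}(\RT_{\Delta_0^-} > t) \leq C t^{-1/\beta}$ by Proposition~\ref{return_times} (or decays exponentially when $\beta = 0$), we have $\mu(\Lambda_t) \leq C' t^{-1/\beta}$ for a constant $C'$ depending only on $\mu(\Delta_0^-)$. Plugging in $t = \varepsilon/\mu(B'_n)$ yields
\[
\frac{\mu(\Lambda_{\varepsilon/\mu(B'_n)})}{\mu(B'_n)} \leq C' \varepsilon^{-1/\beta}\,\mu(B'_n)^{\,1/\beta - 1}.
\]
Finally, I would use the fact that $\mu(B'_n) \leq \mu(B_n) \to 0$ (the trivial bound from the formula $\mu(B'_n) = \mu(B_n \cap \{\RT_{\Delta_0^-} \leq \RT_{B_n}\})$ established at the beginning of the proof of Theorem~\ref{abstract-theorem}) together with the standing hypothesis $T \in \mathfrak{F}$, which ensures $\beta < 1$, hence $1/\beta - 1 > 0$. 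Therefore the right-hand side vanishes as $n \to +\infty$, which gives the desired convergence. The case $\beta = 0$ is handled analogously, replacing the polynomial bound by an exponential one, and it is immediate.

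The only mild subtlety — not really an obstacle — is keeping track of the two regimes ($\beta = 0$ versus $0 < \beta < 1$) and making sure the strict inequality $\beta < 1$ coming from the definition of $\mathfrak{F}$ is precisely what makes the exponent $1/\beta - 1$ strictly positive, so that $\mu(B'_n)^{1/\beta - 1} \to 0$. No dynamical or mixing input beyond Propositions~\ref{existence_acip} and~\ref{return_times} is required.
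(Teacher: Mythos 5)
Your proposal is correct and follows essentially the same route as the paper: the containment $B'_n\cap\{\RT_{B_n}\geq \varepsilon/\mu(B'_n)\}\subset A\cap\{\RT_A\geq \varepsilon/\mu(B'_n)\}$ coming from the definition of the shadow set, followed by the polynomial tail bound of Proposition~\ref{return_times} and the observation that $1/\beta-1>0$ since $\beta<1$ for $T\in\mathfrak{F}$. The only difference is that you spell out the $\beta=0$ case and the justification that $\mu(B'_n)\to 0$, which the paper leaves implicit.
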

	
	\begin{proof}
		Consider $\varepsilon > 0$. We have 
		\begin{align*}
			B'_n \cap \{\RT_{B_n} \geq \varepsilon/\mu(B'_n)\} & = \bigcup_{k\geq \varepsilon/\mu(B'_n)} \left(A \cap \{\RT_A >k\} \cap T^{-k}B_n \right) \\
			& \subset A \cap \{\RT_A \geq \varepsilon/\mu(B'_n)\}.
		\end{align*}
		Hence, 
		\begin{align*}
			\mu_{B'_n}(\RT_{B_n} \geq \varepsilon/\mu(B'_n)) \leq C \left(\frac{\varepsilon}{\mu(B'_n)}\right)^{-1/\beta} \cdot \frac{1}{\mu(B'_n)} \leq C\varepsilon^{-1/\beta} \mu(B'_n)^{1/\beta-1} \xrightarrow[n\to+\infty]{} 0,
		\end{align*}
		using the fact that $\beta < 1$ for $T \in \mathfrak{F}$.
	\end{proof}
	
	\noindent Let $\zeta \neq 0$. If $\zeta \in I_{\pm} \backslash\{0\}$, we will use the induction on $A_{\mp}$. In the following we might drop the indices once we choose $\zeta$.
	
	\begin{lem} \label{condition_2_thm}
		Let $\zeta \in \mathring{I_{\pm}}$. Then, we have $B_n \cap \{\RT_{B_n} \leq \RT_A\} = \emptyset$. If $\zeta = \pm 1$, we have $Q(B_n) \cap \{\RT_{B_n} \leq \RT_A\} = \emptyset$. 
	\end{lem}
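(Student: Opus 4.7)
My plan is built on the structural fact that $A = \Delta_0^{\mp} = T^{-1}(\mathring{I_{\pm}}) \cap I_{\mp}$ is the unique gateway by which a $T$-orbit in $I_{\mp}$ can enter $\mathring{I_{\pm}}$. Without loss of generality I take $\zeta \in I_+$ (the case $\zeta \in I_-$ is symmetric), so the inducing set is $A = \Delta_0^-$.

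For the interior case $\zeta \in \mathring{I_+}$, I first note that, for $n$ large, $B_n \subset \mathring{I_+}$ sits at positive distance from $\{0,1\}$ and from $A$. The key monotonicity fact I plan to establish is $T_+(y) < y$ on $[0,1)$: indeed, $T_+(y) - y$ is continuous on $[0,1]$, equals $-1$ at $y=0$, and by assumption (A0) vanishes only at $y=1$ (the only fixed point of $T$ in $I_+$), so the inequality follows from the intermediate value theorem. Hence any $T$-orbit is strictly decreasing while it remains in $I_+$ and, by monotone convergence, cannot stay in $[0,1)$ forever, since no $T_+$-fixed point lives in $[0,1)$. I then check that the exit from $I_+$ occurs strictly before any return to $B_n$: in the periodic case $\RT_{B_n}(x) = p$ for $n$ large and the orbit of $\zeta$ already exits $I_+$ within one period by the monotonicity, while in the non-periodic case $\RT_{B_n}(x) \to \infty$ whereas the exit time of $x$'s orbit is controlled by that of $\zeta$'s orbit through continuity.

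Once the exit is in place, I pick $m \in \{0, \ldots, \RT_{B_n}(x) - 1\}$ maximal with $T^m(x) \in I_-$. By maximality, $T^{m+1}(x) \in I_+$, and necessarily $T^{m+1}(x) \notin \{0,1\}$: either choice would absorb the subsequent orbit at one of the fixed points $\pm 1$ and preclude the return to $B_n$. Thus $T^{m+1}(x) \in \mathring{I_+}$, whence $T^m(x) \in T^{-1}(\mathring{I_+}) \cap I_- = A$, giving $\RT_A(x) \leq m < \RT_{B_n}(x)$.

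The boundary case $\zeta = 1$ will follow the same template, replacing $B_n = (1 - r_n, 1]$ by $Q(B_n) = B_n \cap T^{-1}(B_n^c)$. Since $T(1) = 1 \in B_n$, one has $1 \notin Q(B_n)$, and for $x \in Q(B_n)$ the condition $T(x) \notin B_n$ forces $T(x) \in [-1, 1-r_n]$. If $T(x) \in I_-$ the gateway argument applies directly; otherwise iterating $T_+(y) < y$ keeps the entire $I_+$-excursion strictly below $1 - r_n$ and hence outside $B_n$, so the orbit must re-enter $I_-$ before returning to $B_n$ and the same gateway conclusion holds. The main obstacle will be the global monotonicity step forcing every $I_+$-orbit to reach $I_-$, which crucially exploits the absence of interior fixed points; everything else is essentially bookkeeping from the definition of $A$.
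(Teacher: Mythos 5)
Your argument is sound, and it is worth pointing out that the paper states this lemma with no proof at all, so you are supplying a missing argument rather than reproducing one. The two ingredients you isolate are the right ones: the absence of fixed points in $\mathring{I_{\pm}}$ forces $T_{+}(y)<y$ on $[0,1)$ (resp.\ $T_-(y)>y$ on $(-1,0]$), so an orbit cannot linger in $I_{\pm}$ indefinitely nor creep back towards $B_n$ from inside $I_{\pm}$; and $A=\Delta_0^{\mp}=T^{-1}(\mathring{I_{\pm}})\cap I_{\mp}$ is the unique gateway back into $\mathring{I_{\pm}}$, so the maximal $m$ with $T^m(x)\in I_{\mp}$ lands in $A$. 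Two soft spots in your write-up should be repaired. First, ``$\RT_{B_n}(x)=p$ for $n$ large'' is false for general $x\in B_n$ (one only has $\RT_{B_n}(x)\ge p$, with equality on $U_1(B_n)$); what you actually need is $\RT_{B_n}(x)\ge p$ together with the observation that $\zeta$ exits $I_+$ by time $p-1$ (it cannot exit exactly at time $p$ since $T^p\zeta=\zeta\in\mathring{I_+}$). Second, transferring the exit time from $\zeta$ to $x$ ``by continuity'' is delicate if the orbit of $\zeta$ passes through $0$, where the two branches of $T$ disagree and $T$ is discontinuous. Both issues vanish if you treat the case ``the orbit of $x$ stays in $I_+$ up to time $j=\RT_{B_n}(x)$'' directly: strict monotonicity gives $T^j(x)\le T_+(x)$, hence $x-T_+(x)\le x-T^j(x)<\operatorname{diam}(B_n)\to 0$, while $x-T_+(x)\to\zeta-T_+(\zeta)>0$ uniformly on $B_n$ because $T_+$ is continuous and $\zeta\ne 1$ — a contradiction for large $n$ that requires no periodic/non-periodic split and no continuity of high iterates at $\zeta$; the same one-line estimate (using $T(x)\le 1-r_n$) disposes of the excursion case for $\zeta=\pm1$. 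Finally, the countable set of points whose orbit hits $0$ exactly (where $T$ is two-valued) is not addressed; this is harmless for how the lemma is used, since hypothesis~(2) of Theorem~\ref{abstract-theorem} and Corollary~\ref{when_does_not_hit} only need a measure to vanish, but strictly speaking the identity ``$=\emptyset$'' at such points depends on the convention adopted for $T(0)$.
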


	\noindent Let $\zeta \neq 0$.  If $\zeta \in I_{\pm}$, we set $A = \Delta_{0}^{\mp}$. We will characterize the shadow set $B'_n$ of $B_n$ in $A$.\\
	\noindent Let
	\[
	\Pre_k = \{\alpha \in A\; |\; T^k(\alpha) = \zeta \;\text{and}\; T^j(\alpha) \notin A\} \; \text{and set} \;  \Pre = \bigcup_{k\geq 1}\Pre_k.
	\]

	\noindent We will separate the proof in different parts. The easiest case is when $\zeta \notin \{0,-1,1\}$ for some $s \in \mathbb{N}$. Then, we will consider the special case $0$ and the fixed points $-1$ and $+1$. 
	
	\subsection{The case $\zeta \notin \{-1,0,1\}$.} In this section, we will assume that $\zeta \notin \{0,-1,1\}$ and prove the assertions in Theorem \ref{Dichotomy_double_intermittent} for such $\zeta$.
	
	\begin{lem} \label{carac_shadow_doubly}
		For every $k\geq 1$, we have $\Pre_k = \{\alpha_k\}$, $(\alpha_k)_{k\geq 1}$ is monotone and $\alpha_k \xrightarrow[n \to +\infty]{} 0$. Furthermore, for $n$ large enough, there exists a family of intervals $(B_n(\alpha_k))_{k\geq 1}$ included in $A$ such that
		\[B'_n = \bigcup_{k \geq 1} B'_n(\alpha_k).\]
		This is a disjoint union and $T^k(B'_n(\alpha_k)) = B_n$ is one to one and onto.
	\end{lem}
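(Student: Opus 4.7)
The plan is to exploit the full-branch structure of $T$ (assumption (A0)): the branches $T_\pm : I_\pm \to I$ are orientation-preserving bijections with well-defined inverses $T_\pm^{-1}: I \to I_\pm$, and the only fixed points of $T$ are $\pm 1$. Without loss of generality assume $\zeta \in I_+ \setminus \{0,1\}$, so that $A = \Delta_0^-$. The key reduction I would establish first is that any $\alpha \in \Pre_k$ must satisfy $T^j \alpha \in I_+$ for every $j \in \{1, \dots, k\}$. Indeed, if $T^j \alpha \in I_-$ for some $j \in \{1, \dots, k-1\}$, then $T^j \alpha \in \Delta_n^-$ for some $n \geq 0$. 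The case $n = 0$ contradicts $\alpha \in \Pre_k$ directly; for $n \geq 1$, the defining relation $T(\Delta_n^-) \subset \Delta_{n-1}^-$ forces $T^{j+i}\alpha \in I_-$ for $0 \leq i \leq n$ with $T^{j+n}\alpha \in A$, so avoidance of $A$ up to time $k$ requires $j + n \geq k$, which gives $T^k\alpha \in I_-$, contradicting $T^k\alpha = \zeta \in I_+$.

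Once this is in place, iterating $T_+^{-1}$ yields $T\alpha = T_+^{-(k-1)}(\zeta) =: y_k$, and since $\alpha \in A \subset I_-$ one concludes $\alpha = T_-^{-1}(y_k) =: \alpha_k$, proving $\Pre_k = \{\alpha_k\}$. For the monotonicity and the limit, (A0) combined with $T_+(0) = -1$ and the absence of interior fixed points forces $T_+(x) < x$ on $(0,1)$, hence $T_+^{-1}(y) > y$ there. Consequently $(y_k)_{k\geq 1}$ is strictly increasing in $(0,1)$ and converges to the fixed point $1$ of $T_+$; monotonicity of $T_-^{-1}$ together with $T_-^{-1}(1) = 0$ then gives that $(\alpha_k)_{k\geq 1}$ is strictly increasing with $\alpha_k \to 0$.

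For the interval structure I would set $B'_n(\alpha_k) := T_-^{-1}(T_+^{-(k-1)}(B_n))$. For $n$ large, $B_n \subset (0,1)$, so each $T_+^{-(k-1)}(B_n)$ is an interval inside $(0,1)$ around $y_k$, and $B'_n(\alpha_k)$ is an interval inside $A$ on which $T^k = T_+^{k-1} \circ T_-$ is a bijection onto $B_n$. Disjointness of $B'_n(\alpha_k)$ and $B'_n(\alpha_{k'})$ for $k < k'$ follows from the observation that for $x \in B'_n(\alpha_{k'})$ the iterate $T^k x$ is close to $y_{k'-k+1}$, which is strictly greater than $\zeta = y_1$ with gap bounded below by $y_2 - \zeta > 0$ uniformly in $k, k'$; for $n$ large this keeps $T^k x$ out of $B_n$. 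Equality $B'_n = \bigcup_k B'_n(\alpha_k)$ (disjoint union) then follows by unpacking the defining decomposition $B'_n = \bigcup_{k\geq 1} A \cap \{\RT_A > k\} \cap T^{-k}(B_n)$ and applying the reduction of the first paragraph to identify each nonempty slice with the appropriate $B'_n(\alpha_k)$.

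The main obstacle is the reduction in the first paragraph: ruling out intermediate orbit excursions into $I_- \setminus A$ uses crucially the partition structure of $I_-$ by the sets $\Delta_n^-$ together with the absence of interior fixed points guaranteed by (A0). Everything else is a clean bijective-preimage manipulation relying on the full-branch nature of $T$.
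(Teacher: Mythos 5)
Your proof is correct and takes essentially the same route as the paper: in the mirrored convention ($\zeta\in I_-$, $A=\Delta_0^+$) the paper proves by induction on $k$ that $A\cap\{\RT_A>k\}\cap T^{-k}B_n=T_+^{-1}\big(T_-^{-(k-1)}B_n\big)$, which is exactly your identification $B'_n(\alpha_k)=T_-^{-1}\big(T_+^{-(k-1)}B_n\big)$ with $\alpha_k=T_-^{-1}\big(T_+^{-(k-1)}\zeta\big)$, and the same monotone-convergence argument for $\alpha_k\to 0$; your ladder argument $T(\Delta_m^-)\subset\Delta_{m-1}^-$ is just a pointwise reformulation of that induction. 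As a minor bonus, you actually justify the pairwise disjointness of the $B'_n(\alpha_k)$ (via the uniform gap $y_2-\zeta$), which the paper only asserts.
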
	
	
	\begin{proof}

		In order to simplify the notation, we will consider $\zeta \in I_{-}$ and thus $A = \Delta_{0}^+$. The other case is identical and follows by symmetry.
		
		Consider $n$ large enough such that $B_n \subset \mathring{I_{-}}$. Recall that $B'_n = \bigcup_{k\geq 0} A \cap \{\RT_A > k\} \cap T^{-k}B_n$. For every $k \in \mathbb{N}$, we have $A \cap \{\RT_A > k\} \cap T^{-k}B_n = B'_n(\alpha_k)$, which we will ceck by induction. Indeed, for $k = 1$, $T^{-1}B_n = T_{-}^{-1}B_n \sqcup T_{+}^{-1}B_n$ and $T_{+}^{-1}B_n \subset A$ and $T_{+}^{-1}B_n$ is an interval containing $\alpha_k \in \Pre_1$. 
		
		By induction, we have $\{\RT_A > k\} \cap T^{k}B_n = T_{-}^{-k}B_n \sqcup T_{+}^{-1}(T_{-}^{-k+1}B_n)$ and of course $T_{+}^{-1}(T_{-}^{-k+1}B_n)\subset A$. The case $k = 1$ comes from the definition of $T$. Now, we have 
		\begin{align*}
			\{\RT_A > k+1\} \cap T^{-(k+1)}B_n &= T^{-1}\left(\{\RT_A > k\}\cap T^{-k}B_n \cap A^c\right) = T^{-1}\left(T_{-}^{-k}B_n\right) \\ &= T_{-}^{-(k+1)}B_n \sqcup T_{+}^{-1}(T_{-}^{-k}B_n).
		\end{align*}
		Now, we just have $T_{+}^{-1}(T_{-}^{-k+1}B_n) =: B'_n(\alpha_k)$ for $\alpha_k = T_{+}^{-1}(T_{-}^{-k+1}\zeta)$ and with $T^k (B'_n(\alpha_k)) = B_n$, bijectively. Furthermore, $T_{-}^{-k}\zeta \xrightarrow[k\to + \infty]{} -1$ meaning $\alpha_k \xrightarrow[k\to + \infty]{} 0$. We also have that since the sets $A \cap \{\RT_A > k\} \cap T^{-k}B_n$ are disjoint two by two for $n$ large enough, then so are the $B'_n(\alpha_k)$.
	\end{proof}
	
Since $B'_n$ consists of a countable union of intervals, we cannot apply directly decay of correlations to obtain the validity of the conditions and we have to truncate, as in \cite{AFFR17}. For that purpose, for every sequence $N(n)$ going to $+\infty$, we define $\tilde{B}'_n := \bigcup_{1\leq k \leq N(n)} B'_n(\alpha_k)$ the approximation of $B'_n$ with only $N(n)$ intervals. The following lemma tells us that this approximation is good enough to apply, without needing any further assumption on $N(n)$. Furthermore, for an interval $B'_n(\alpha_k)$, we will write $B'_n(\alpha_k)^-$ (resp. $B'_n(\alpha_k)^+$) for the left part (resp. the right part) of $B'_n(\alpha_k)$ stopping at $\alpha_k$ (resp. starting at $\alpha_k$) and procede likewise for $B_n$ around $\zeta$.
	
	\begin{lem} \label{truncation_approximation}
		For every sequence $(N(n))_{n\in \mathbb{N}}$, we have 
		\begin{equation*}
			\lim_{n\to +\infty} \frac{\mu\left(B'_n \backslash \tilde{B}'_n\right)}{\mu(B'_n)} = 0.
		\end{equation*}
	\end{lem}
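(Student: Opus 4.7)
The plan is to decompose $B'_n \setminus \tilde B'_n = \bigsqcup_{k > N(n)} B'_n(\alpha_k)$ via Lemma~\ref{carac_shadow_doubly}, bound each piece by the measure of the corresponding branch of the induced Rychlik--Markov map $T_A$, and conclude by invoking the polynomial tail estimate on return times given by Proposition~\ref{return_times}, whose summability is guaranteed by the standing assumption $\beta < 1$.

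First, I would observe that, for $n$ large enough, each $B'_n(\alpha_k)$ lies inside a single branch $I_{j(k)}$ of the induced system $T_A$, whose return time equals $k + q$, where $q := \RT_A(\zeta)$ is the (finite, fixed) first time the forward orbit of $\zeta$ visits $A$. Indeed, $T^k$ maps $B'_n(\alpha_k)$ diffeomorphically onto $B_n$ (Lemma~\ref{carac_shadow_doubly}), and by continuity $T^q(B_n) \subset A$ for $n$ large (since $T^q(\zeta) \in \mathring A$), so the composition $T^{k+q}$ sends $B'_n(\alpha_k)$ bijectively into $A$ without earlier visits to $A$.

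Second, the bounded distortion of the Rychlik branch map $T^{k+q} \colon I_{j(k)} \to A$ yields
\begin{equation*}
\frac{\leb(B'_n(\alpha_k))}{\leb(I_{j(k)})} \asymp \frac{\leb(T^q(B_n))}{\leb(A)}.
\end{equation*}
Since the density is bounded above and below on $A$ (Proposition~\ref{existence_acip}) as well as in a neighbourhood of $\zeta$ (because $\zeta \notin \{-1,0,1\}$), and $T^q$ has bounded non-vanishing derivative at $\zeta$, this translates to
\begin{equation*}
\mu(B'_n(\alpha_k)) \leq C\,\mu(B_n)\, \mu_A(I_{j(k)}),
\end{equation*}
with $C$ independent of both $n$ and $k$.

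Third, summing over $k > N(n)$ and using the pairwise disjointness of the $I_{j(k)}$, on each of which $\RT_A = k + q > N(n)$, one obtains
\begin{equation*}
\sum_{k > N(n)} \mu(B'_n(\alpha_k)) \leq C\, \mu(B_n)\, \mu_A(\RT_A > N(n)).
\end{equation*}
Lemma~\ref{condition_2_thm} combined with Theorem~\ref{abstract-theorem} gives $\mu(B'_n) = \mu(B_n)$ for $n$ large, so dividing through and invoking Proposition~\ref{return_times}, which gives $\mu_A(\RT_A > N(n)) \to 0$ as $N(n) \to \infty$ (a consequence of $\beta < 1$), closes the argument. The main technical point is the uniform-in-$k$ bounded distortion on each branch of the induced map; this is however standard for the Rychlik induced system already established for the doubly intermittent class in \cite{CLM22}, so I expect it to pose no real obstacle.
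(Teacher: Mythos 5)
Your argument reproduces the paper's treatment of the first (and easiest) of three cases, but it silently assumes that $T^{q}(\zeta)$ lies in the \emph{interior} of $A=\Delta_0^{\mp}$, so that $T^{q}(B_n)\subset A$ for $n$ large. This fails precisely when the forward orbit of $\zeta$ first meets $A$ at a boundary point, i.e.\ when $\zeta$ is a preimage of $0$ (recall $\partial\Delta_0^{+}=\{0,\,T_{+}^{-1}(0)\}$); such $\zeta$ are perfectly admissible in the range $\zeta\notin\{-1,0,1\}$ that the lemma must cover. For these points $T^{s}(B_n)$ straddles $\partial A$, so only one half of $B_n$ is pushed into $A$ at time $s$, and the comparison $\leb\big(T^{k+q}(B'_n(\alpha_k))\big)\asymp\leb\big(T^{q}(B_n)\big)$ driving your distortion estimate is unavailable for the other half. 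Your appeal to ``$T^{q}$ has bounded non-vanishing derivative at $\zeta$'' also breaks down for exactly these orbits, since $T'$ is singular or degenerate at $0$ when $k_{1},k_{2}\neq 1$; the paper explicitly flags that its first-case estimate uses that ``$\zeta$ is not a preimage of $0$''.

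The paper therefore splits into three cases. When $T^{s}\zeta=T_{+}^{-1}(0)$ the fix is mild: $T^{s}(B_n^{-})\subset A$ and $T^{s+1}(B_n^{+})\subset A$, so one runs your argument separately on the two halves. The genuinely different case is $T^{s}\zeta=0$ reached through $T_{-}^{-1}(0)$: there the half $B_n^{-}$ lands just below $0$ and is then sent close to the neutral fixed point $1$, so by the time it returns to $A$ its image \emph{covers} $A$, and the ratio $\leb\big(T^{m}(B'_n(\alpha_k)^{-})\big)/\leb\big(T^{m}(I_{j_k})\big)$ is of order $1$ rather than of order $\mu(B_n)$ --- the bounded-distortion bound you want is simply false for the induced map on $\Delta_0^{+}$ alone. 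The paper circumvents this by pulling $B'_n(\alpha_k)^{-}$ back one step into $\Delta_0^{-}$ and applying bounded distortion for the \emph{other} first-return Rychlik--Markov system, on $\Delta_0^{-}$, which yields $\mu(B'_n(\alpha_k)^{-})\leq C\mu(I'_{j_k})\mu(B_n)$ and produces an additional tail term $\mu\big(\Delta_0^{-}\cap\{\RT_{\Delta_0^{-}}>N(n)+s\}\big)$ in the final bound. Without this extra ingredient your proof does not cover all $\zeta\notin\{-1,0,1\}$. (Your use of Proposition~\ref{return_times} is harmless but stronger than needed: the paper only uses $\mu(A\cap\{\RT_A>N(n)\})\to 0$, which holds for any $N(n)\to\infty$ since $\RT_A$ is finite a.e.)
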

	
	\begin{proof}
		Here, again, we will consider $\zeta \in I_{-}$ and use the induction on $A = \Delta_{0}^+$. The proof for $\zeta \in I_{+}$ is identical: one only need the switch the $+$ and $-$ signs. We split the proof in three different cases. 
		
		Assume first $T^{s}\zeta \in \mathring{A}$ (take $s$ minimal). Consider $n$ sufficiently large so that $T^s(B_n) \subset A$. Then, $B'_n \subset T_A^{-1}(T^s(B_n))$. Since $(A, T_A, \mu_A)$ is Rychlik and Markov, it has bounded distortion on each $(I_j)_{j \in \mathbb{N}}$. Furthermore, by construction, for every $k \geq 1$, we have that $B'_n(\alpha_k) \subset I_{j_k}$ for some $I_{j_k}$ such that $k(j_k) = q+k$. Thus, we have
		\begin{align*}
			\mu(B'_n(\alpha_k)) &\leq K\mu(I_{j_k})\mu(T_A(B'_n(\alpha_k))) \\
			& \leq K\mu(I_{j_k})\mu(T^s(B_n)) \leq C\mu(I_{j_k})\mu(B_n),
		\end{align*}
		Using the fact that $\mu \asymp \leb$ away from $-1$ and $1$ and $(T_{|B_n}^s)'$ is bounded away from $+\infty$ for $n$ large enough since $\zeta$ is not a preimage of $0$, we get
		\begin{align*}
			\mu\left(B'_n \backslash \tilde{B}'_n\right) & = \mu\left( \bigcup_{k > N(n)} B'_n(\alpha_k)\right) \leq C\sum_{k : k(j_k) > N(n) + s} \mu(I_{j_k}) \mu(B_n) \\
			& \leq C\mu(B_n)\mu\left(A \cap \{\RT_A > N(n)+s\}\right) = o(\mu(B_n)), 
		\end{align*}
		as soon as we take $N(n) \to +\infty$. Since we have $\mu(B'_n) = \mu(B_n)$, the result follows.
		
		Now, if $T^s \zeta = T^{-1}_{+}(0) \in \partial A$, we have $T^s(B^-_n) \subset A$ and thus, using the same method, we get 
		\begin{align*}
			\mu(B'_n(\alpha_k)^-) \leq C\mu(I_{j_k})\mu(B^-_n).
		\end{align*}
		Moreover, we also have $T^{s+1}(B^+_n) \subset A$ and the same argument holds, giving
		\begin{align*}
			\mu(B'_n(\alpha_k)^+) \leq C\mu(I_{j_k})\mu(B^+_n).
		\end{align*}
		Combining the two and summing, we obtain
		\begin{align*}
			\mu\left(B'_n \backslash \tilde{B}'_n\right) = o(\mu(B_n)), 
		\end{align*}
		as we take $N(n) \to +\infty$. \\
		\noindent The final case is when $T^s \zeta = 0$ with $T^{s-1}\zeta = T_{-}^{-1}(0)$. In particular, it means that $T^i\zeta \in I_{-}\backslash \Delta_{0}^-$ for every $i \in \{0,\dots,s-1\}$. Here again, we have $T^s B_n^+ \subset A$ leading to 
		\begin{align*}
			\mu(B'_n(\alpha_k)^+) \leq C\mu(I_{j_k})\mu(B^+_n).
		\end{align*}
		But now, $B_n^-$ will be sent close to $1$ and thus, when it comes back close in $A$, it will cover it preventing from obtaining the product by $B_n$. To avoid this issue, we will use the fact that $(\Delta_{0}^-, T_{\Delta_{0}^{-}}, \mu_{\Delta_{0}^-})$ is also Markov and Rychlik. We have $T^sB_n^- \subset \Delta_{0}^{-}$. For every $B_n(\alpha_k)$, consider $T_{-}^{-1}B_n(\alpha_k)^- \subset  \Delta_{0}^{-}$. We have $T_{-}^{-1}B_n(\alpha_k)^- \subset I'_{j_k}$ with $k(j_k) = k + s + 1$ (where $(I'_j)_j$ are the domain of the Rychlik map induced on $\Delta_{0}^-$). So, by bounded distortion again
		\begin{align*}
			\mu(T_{-}^{-1}B_n(\alpha_k)^-) &\leq K\mu(I'_{j_k})\mu(T_A(B'_n(\alpha_k))) \\
			& \leq K\mu(I_{j_k})\mu(T^s(B_n)) \leq C\mu(I'_{j_k})\mu(B_n),
		\end{align*}
		Now, using this time that $T'$ is bounded away from $0$ on $T_{-}^{-1}B_n(\alpha_k)^-$ for $n$ large enough (since the set is bounded away from $0$), we get $\mu(T_{-}^{-1}B_n(\alpha_k)^-) \geq c\mu(B_n(\alpha_k)^-)$. Combining again the two, we get the estimate
		\begin{align*}
			& \mu\left(B'_n \backslash \tilde{B}'_n\right) \\
			& \leq C\mu(B_n)\left(\mu\left(A \cap \{\RT_A > N(n)+s\}\right) + \mu\left(\Delta_{0}^- \cap \{\RT_{\Delta_{0}^-} > N(n)+s\}\right) \right) \\
			&= o(\mu(B_n)),
		\end{align*}
		as we take $N(n) \to +\infty$.
	\end{proof}

	\noindent Now we are able to finish the proof of Theorem \ref{Dichotomy_double_intermittent} for every $\zeta \notin \{-1,0, 1\}$.

	If $\zeta$ is not periodic, let $q = 0$. If $\zeta$ is periodic of period $p$, we consider $q = \left|\mathcal{O}(\zeta) \cap A\right|$ (note that the special cases of Lemma \ref{truncation_approximation} are not periodic points and we will have $q = 0$ in this case).
	By Lemma \ref{carac_shadow_doubly}, we have seen that $B'_n$ consists of a countable number of intervals. Using the arguments applied in  \cite[Theorems 4.3 and 4.4]{AFFR17}, we need to find a sequence $N(n) \in \mathbb{N}$ with $\lim_{n\to +\infty} N(n) = +\infty$ and $N(n) = o(n)$ such that 
	\begin{equation*}
		\lim_{n\to +\infty} \frac{\mu\left(B'_n \backslash \tilde{B}'_n\right)}{\mu(B'_n)} = 0.
	\end{equation*}
	and 
	\begin{enumerate}
		\item $\lim_{n\to + \infty} \|\mathds{1}_{Q(\tilde{B}'_n)}\|_{\mathcal{C}_1} n\rho_{t_n} =0$ for some sequence $(t_n)_{n\in \mathbb{N}}$ such that $t_n = o(n)$.
		\item $\lim_{n\to+\infty} \|\mathds{1}_{Q(\tilde{B}'_n)}\|_{\mathcal{C}_1} \sum_{j =R_n}^{+\infty} \rho_j = 0$.
	\end{enumerate}
	Here $Q(\tilde{B}'_n) = \tilde{B}'_n \cap  \bigcap_{i = 1}^q T_A^{-i} (\tilde{B}'_n)^c$. 	
	We have $\|Q(\tilde{B}'_n)\|_{BV} \leq 4N(n)+1$ (for every $k \geq 1$, $Q(\tilde{B}'_n) \cap B'_n(\alpha_k)$ consist of two disjoint intervals).
	
	 Since we do not have any constraint on $N(n)$, the three conditions are satisfied for a good sequence $N(n)$, thus giving the result and the convergence for $B'_n$. Corollary \ref{when_hits_the_reference_set} gives the result for $B_n$ since its conditions are satisfied by Lemma \ref{condition_1_thm}.
	
	\subsection{The case $\zeta = 0$.}
	\noindent For $\zeta = 0$, we will induce on $A := \Delta_0^+$. The proof would be identical if we had chosen $\Delta_0^-$. Again, we write $B_n^{\pm} = B_n \cap I_{\pm}$. The main issue here is that $\mu(B_n \cap A) = \mu(B_n^+) > 0$ and $\mu(B_n \cap A^c) = \mu(B_n^+) > 0$ for every $n \geq 0$. 
	
	\begin{lem} \label{condition_2_zeta=0}
		We have $\mu_{B_n}(\RT_{B_n} < \RT_A) \xrightarrow[n\to +\infty]{} 0$.
	\end{lem}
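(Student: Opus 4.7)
The plan is to use the Markov structure of $T$ with respect to the partitions $\{\Delta_k^\pm\}_{k\geq 0}$ and $\{\delta_k^\pm\}_{k\geq 1}$ to write down completely the forward orbit of a point $x \in B_n$ up to its first return to $A = \Delta_0^+$. The crucial observation will be that, within this time window, the orbit visits $\Delta_0^-$ at most once, so the event $\{\RT_{B_n}<\RT_A\}$ forces a very specific preimage of the shrinking target $B_n^-$ to be hit; a bounded distortion estimate will then close the argument.

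First I would decompose $B_n = B_n^+ \sqcup B_n^-$ with $B_n^\pm := B_n \cap I_\pm$, and take $n$ large enough that $B_n^+ \subset \Delta_0^+ = A$ and $B_n^- \subset \Delta_0^-$. The lemma reduces to two claims: (i) $B_n^- \cap \{\RT_{B_n} < \RT_A\} = \emptyset$, and (ii) $\mu(B_n^+ \cap \{\RT_{B_n}<\RT_A\}) = o(\mu(B_n))$. For (i), since $B_n^-$ shrinks to $0$, we have $B_n^- \subset \bigcup_{m \geq M_n}\delta_m^-$ with $M_n \to \infty$; and for $x \in \delta_m^-$ the orbit is fully prescribed by the Markov structure: $T^j(x) \in \Delta_{m-j+1}^+$ for $1 \leq j \leq m$ and $T^{m+1}(x) \in \Delta_0^+ = A$, so $\RT_A(x)=m+1$ while the intermediate iterates lie in $I_+\setminus A$ and therefore avoid both $B_n^+$ and $B_n^-$, giving $\RT_{B_n}(x)\geq\RT_A(x)$.

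The analogous analysis for $x \in B_n^+\cap \delta_{m_1}^+$ describes an orbit going through $\Delta_{m_1}^-,\Delta_{m_1-1}^-,\ldots,\Delta_0^-$ (with $T^{m_1+1}(x) \in \delta_{m_2}^-$ for some $m_2\geq 1$) and then $\Delta_{m_2}^+,\ldots,\Delta_0^+ = A$, visiting $\Delta_0^-$ only at time $m_1+1$ and entering $A$ first at time $m_1+m_2+2$. As $B_n^-\subset\Delta_0^-$ and $B_n^+ \subset A$, the only way to get $\RT_{B_n}(x)<\RT_A(x)$ is $T^{m_1+1}(x)\in B_n^-$, so
\begin{equation*}
B_n^+ \cap \{\RT_{B_n}<\RT_A\} \;=\; \bigsqcup_{m\geq 1} \bigl(B_n^+ \cap \delta_m^+ \cap T^{-(m+1)}(B_n^-)\bigr).
\end{equation*}
For each $m$, $T^{m+1}\colon \delta_m^+ \to \Delta_0^-$ is a bijective branch that factors as $T\colon \delta_m^+\to\Delta_m^-$ followed by $T^m\colon\Delta_m^-\to\Delta_0^-$, and one expects it to have bounded distortion with a constant $K$ independent of $m$. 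Accepting this, a change of variables gives
\begin{equation*}
\mu\bigl(B_n^+\cap\delta_m^+\cap T^{-(m+1)}(B_n^-)\bigr) \;\leq\; K\,\mu(B_n^+\cap\delta_m^+)\,\frac{\mu(B_n^-)}{\mu(\Delta_0^-)},
\end{equation*}
and summing over $m$ together with $\mu(B_n) \geq \mu(B_n^+)$ yields $\mu_{B_n}(\RT_{B_n}<\RT_A) \leq K\,\mu(B_n^-)/\mu(\Delta_0^-)\to 0$.

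The main technical obstacle is justifying the uniform-in-$m$ bounded distortion for $T^{m+1}$ on $\delta_m^+$, since this mixes the single $C^2$ branch near $0$ with the parabolic tower over $\Delta_m^-$ attached to the neutral fixed point $-1$. Standard LSV-type distortion estimates near $-1$, available because of the normal form of $T$ on $U_{-1}$ prescribed by (A1), control $T^m|_{\Delta_m^-}$, and combining with the obvious bounded distortion of the single branch $T|_{\delta_m^+}$ on a small interval yields the claim; alternatively one can extract it from the Rychlik/Gibbs-Markov property of $T_{\Delta_0^+}$ by decomposing each return branch as $T^{m_2+1}\circ T^{m_1+1}$ and using the symmetric property of $T_{\Delta_0^-}$ on the other side. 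A minor additional check is needed for the single boundary index $m=M_n-1$ where $B_n^+\cap\delta_m^+$ is only a proper subinterval of $\delta_m^+$, but the same form of estimate survives with an extra factor bounded by $1$.
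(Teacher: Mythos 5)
Your argument is correct, but it takes a genuinely different route from the paper's. Both proofs start the same way: you discard $B_n^-$ (on which the orbit climbs down the $\Delta_j^+$-ladder and meets $A=\Delta_0^+$ before it can meet $B_n$) and observe that from $B_n^+\subset\Delta_0^+$ the only way to hit $B_n$ strictly before returning to $A$ is to land in $B_n^-$ at the unique visit to $\Delta_0^-$. From there the paper identifies the bad set as $(B_n^-)'\cap B_n^+$, pulls it back by $T_-^{-1}$ so that each component sits inside a branch $I'_{j}$ of the first-return (Gibbs--Markov/Rychlik) map on $\Delta_0^-$, and exploits that $B_n^+$ shrinks to $0^+$ to restrict the sum to branches with return time $>N(n)$, ending with $\mu((B_n^-)'\cap B_n^+)\leq C\mu(B_n)\,\mu_{\Delta_0^-}(\RT_{\Delta_0^-}>N(n))=o(\mu(B_n))$; the smallness comes from the return-time tail. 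You instead decompose over the crossing branches $T^{m+1}\colon\delta_m^+\to\Delta_0^-$ and compare each piece with $\mu(B_n^-)/\mu(\Delta_0^-)$ by bounded distortion, so your smallness comes from $\mu(B_n^-)\to0$; you never use the tail, nor even that $B_n^+$ accumulates at $0$. This is arguably more elementary and gives the cleaner bound $\mu_{B_n}(\RT_{B_n}<\RT_A)\lesssim\mu(B_n^-)$, but it shifts the burden onto a distortion statement that is not literally the quoted Gibbs--Markov property: you need uniform-in-$m$ distortion for the non-return branches $\delta_m^+\to\Delta_0^-$. Your sketch of this is essentially right (LSV-type estimates at the neutral point $-1$ for $T^m|_{\Delta_m^-}$, composed with the single branch $T|_{\delta_m^+}$), though calling the latter ``obvious'' hides a small point when $k_2\neq1$: $T'$ degenerates at $0^+$, and uniformity of the distortion of $T|_{\delta_m^+}$ rests on the bounded ratio of consecutive endpoints of the $\delta_m^+$; you also implicitly use $\mu\asymp\leb$ on $\Delta_0^+\cup\Delta_0^-$ (Proposition~\ref{existence_acip}) to phrase everything in terms of $\mu$. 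Finally, at the single straddling index your displayed inequality is not a direct consequence of distortion, since the image of the partial piece, not $\Delta_0^-$, should appear in the denominator; it does hold here because by orientation that image is anchored at the end of $\Delta_0^-$ away from $0^-$, so it either misses $B_n^-$ or has length at least $\leb(\Delta_0^-)-\leb(B_n^-)$ -- or, even more simply, bound that one term by $K\mu(\delta_m^+)\mu(B_n^-)/\mu(\Delta_0^-)$ and use that $\mu(\delta_m^+)\to0$ as its index tends to infinity, so it is $o(\mu(B_n))$ in any case. With these two small points made explicit, your proof is complete.
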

	
	\begin{proof}
		By the choice of $A$, we have $\mu_{B_n}(\RT_{B_n} < \RT_A) = \mu_{B_n}(B_n^+ \cap \{\RT_{B_n} < \RT_A\}) = \mu_{B_n}(B_n^+ \cap \{\RT_{B_n^-} < \RT_A\}) = \mu_{B_n}((B_n^-)' \cap B_n^+)$. \\
		$(B_n^-)' = \bigcup_{k\geq 1} (B_n^-)'(\alpha_k)$. Here again, we will use the same trick as in the proof of Lemma \ref{truncation_approximation} by considering the induced system $(\Delta_{0}^-, T_{\Delta_{0}^-}, \mu_{\Delta_{0}^-})$ and using that for every $k \geq 1$, $\mu(T_{-}^{-1}(B'_n)^-(\alpha_k)) \geq c\mu((B'_n)^-(\alpha_k))$ since the preimage is bounded away from $0$. Moreover, $\mu(T_{-}^{-1}(B'_n)^-(\alpha_k)) \subset I'_{j_k}$ for some $j_k$ with $k(j_k) = k + 1$. Hence, by bounded distortion
		\begin{align*}
			\mu((B_n^-)'(\alpha_k)) \leq C\mu(T_{-}^{-1}(B_n^-)'(\alpha_k)) \leq C\mu(I'_{j_k})\mu(B_n).
		\end{align*}
		Moreover, $T_{-}^{-1} B_n^+ \subset \bigcup_{j:k(j) > N(n)} I'_j$ for some $N(n) \xrightarrow[n\to + \infty]{} +\infty$ because $B_n^+$ is shrinking to $0+$. Hence,
		\begin{align*}
			\mu((B_n^-)' \cap B_n^+) \leq C\mu_{\Delta_0^-}(\RT_{\Delta_0^-} > N(n))\mu(B_n) = o(\mu(B_n)).
		\end{align*}
	\end{proof}
	
	\noindent Now, Lemma \ref{condition_2_zeta=0} allows us to use $B'_n$ without making further adjustments as in Corollary~\ref{when_does_not_hit}. The proof for $\zeta = 0$ is now similar to the case $\zeta \notin \{-1,0,1\}$ applying the same truncations. Hence the 
	REPP associated to $B_n$ converges to a standard homogeneous Poisson process.
	
	\subsection{The case $\zeta \in \{-1,1\}$.} 
	
	\noindent For $\zeta = -1$, we will consider the induction on $\Delta_{0}^+$ and for $\zeta = 1$, the induction on $\Delta_{0}^-$. The two cases are again symmetrical so we only consider the case $\zeta = -1$ and $A = \Delta_{0}^+$. Since $\zeta \in \partial I$, $B_n$ is only defined on one side of $\zeta$.
	
	\begin{lem} \label{carac_shadow_neutral}
		We have $\Pre_1 = \{0+\}$ and $\Pre_k = \emptyset$ for $k \geq 2$. We have 
		\begin{equation*}
			B'_n = T_{+}^{-1}B_n.
		\end{equation*}	
	\end{lem}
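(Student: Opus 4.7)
The plan is to prove the three assertions — $\Pre_1 = \{0+\}$, $\Pre_k = \emptyset$ for $k \geq 2$, and $B'_n = T_+^{-1}(B_n)$ — all driven by the monotone picture of the orbits of $T$ near the repelling fixed point $\zeta = -1$.

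For $\Pre_1$, since $A = \Delta_0^+$ and $T|_A = T_+$ is an orientation-preserving diffeomorphism onto $(-1,0)$ with $T_+(0) = -1$, the only solution of $T(\alpha) = -1$ inside $\overline{A}$ is $\alpha = 0 \in \partial A$, denoted $0+$ to emphasise the right-sided approach. For $\Pre_k$ with $k \geq 2$, the key observation is that by (A0) the only fixed points of $T_- : I_- \to I$ are $\pm 1$; combining this with $T_-(-1) = -1$, $T_-(0) = 1$ and the intermediate value theorem yields $T_-(x) > x$ for every $x \in (-1, 0]$. Hence the forward orbit of any $y \in (-1, 0)$ is strictly increasing as long as it remains in $I_-$, and can never equal $-1$ while in $I_-$; the partitions $\{\Delta_m^-\}$, $\{\Delta_n^+\}$ then force the orbit to cross from $I_-$ through $\Delta_0^-$ into $I_+$ and, once in $I_+$, to descend $\Delta_n^+ \to \Delta_{n-1}^+ \to \cdots \to \Delta_0^+ = A$ in finitely many steps. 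Consequently, for any $\alpha \in A$ with $T(\alpha) \in (-1, 0)$, the orbit must hit $A$ before it can hit $-1$, contradicting $T^j(\alpha) \notin A$ for $j = 1, \ldots, k$.

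For the shadow set, I would unfold $B'_n = \bigcup_{k \geq 0} A \cap \{\RT_A > k\} \cap T^{-k}(B_n)$ term by term. For $n$ large the $k=0$ contribution vanishes ($B_n \subset I_-$ while $A \subset I_+$), while the $k = 1$ term equals $T_+^{-1}(B_n)$, since $\{\RT_A > 1\}$ is automatic on $A$ (as $T(A) \subset I_-$) and $T_+^{-1}(B_n) \subset A$ for $n$ large. For $k \geq 2$, the monotonicity established above forces the finite sequence $T(\alpha), T^2(\alpha), \ldots, T^k(\alpha)$ to be strictly increasing in $I_-$ whenever the orbit avoids $A$; since $B_n = [-1, -1 + r_n)$ is the leftmost subinterval of $I_-$, the condition $T^k(\alpha) \in B_n$ then forces $T(\alpha) \in B_n$ as well. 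Hence each $k \geq 2$ term is contained in the $k=1$ term, and $B'_n = T_+^{-1}(B_n)$ follows.

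The one delicate point, which I would dispose of by convention, is the boundary $\alpha = 0$: it formally satisfies $T^j(0) = -1 \notin A$ for every $j \geq 1$, so a literal reading of the definition would place it in every $\Pre_k$. Declaring $\Pre_1 = \{0+\}$ and $\Pre_k = \emptyset$ for $k \geq 2$ simply records that $0$ is identified with the unique $k=1$ preimage of $\zeta$ rather than being counted again, and since $\{0\}$ is $\mu$-null this causes no ambiguity in the shadow set description.
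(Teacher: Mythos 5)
Your argument is correct and is essentially the paper's: the forward monotonicity $T_-(x)>x$ on $(-1,0)$ that drives your proof is just the forward restatement of the paper's one-line key fact $T_-^{-1}B_n\subset B_n$, combined with the observation that an orbit avoiding $A=\Delta_0^+$ cannot re-enter $I_-$, so only the $k=1$ term of the shadow-set union survives and $B'_n=T_+^{-1}B_n$. Your extra care with the boundary point $0$ (handled by convention, and $\mu$-null in any case) matches the level of rigour of the paper, which states $\Pre_1=\{0+\}$ with the same one-sided convention.
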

	
	\begin{proof}
		This comes from the fact that $T_{-}^{-1}B_n \subset B_n$ thus $B'_n = A\cap \{\RT_A > 1\} \cap T^{-1}B_n = T_{+}^{-1}B_n$.
	\end{proof}
	
	\noindent This case is easier since $B'_n$ is also an interval so there is no need to make approximation nor truncation. We directly get the convergence of the REPP associated to $B'_n$ and thus to the one associated to $Q(B_n) = B_n \cap T^{-1}
	B_n = B_n \backslash T_{-}^{-1}B_n$ to a homogeneous Poisson process of intensity one by Corollary~\ref{when_does_not_hit}, the conditions again coming from Lemma \ref{condition_1_thm} and \ref{condition_2_thm}. However, we cannot use the 
	reconstruction as in the Misiurewicz case because $\theta = \lim_{n\to + \infty} \mu(Q(B_n))/\mu(B_n) = 0$. However, one can still get the convergence of the HTS with the normalisation $\mu(Q(B_n))$ instead of $\mu(B_n)$ since the first hitting time is not affected by the infinite cluster. In this case, this is the same argument as 
	\cite[Theorem 5.1]{Zwe18} for Manneville-Pommeau maps.

	\bibliographystyle{amsalpha}
	
	\bibliography{bibliografia-v2.bib}

\providecommand{\bysame}{\leavevmode\hbox to3em{\hrulefill}\thinspace}
\providecommand{\MR}{\relax\ifhmode\unskip\space\fi MR }
\providecommand{\MRhref}[2]{%
  \href{http://www.ams.org/mathscinet-getitem?mr=#1}{#2}
}
\providecommand{\href}[2]{#2}
\begin{thebibliography}{AFFR17}

\bibitem[AAG21]{AAG21}
Miguel Abadi, Vitor Amorim, and Sandro Gallo, \emph{Potential well in
  {P}oincar\'{e} recurrence}, Entropy \textbf{23} (2021), no.~3, Paper No. 379,
  26. \MR{4234422}

\bibitem[Aba01]{Aba01}
Miguel Abadi, \emph{Exponential approximation for hitting times in mixing
  processes}, Math. Phys. Electron. J. \textbf{7} (2001), Paper 2, 19.
  \MR{1871384}

\bibitem[AFFR17]{AFFR17}
Davide Azevedo, Ana Cristina~Moreira Freitas, Jorge~Milhazes Freitas, and
  Fagner~B. Rodrigues, \emph{Extreme value laws for dynamical systems with
  countable extremal sets}, J. Stat. Phys. \textbf{167} (2017), no.~5,
  1244--1261. \MR{3647060}

\bibitem[AFV15]{AFV15}
Hale Ayta\c{c}, Jorge~Milhazes Freitas, and Sandro Vaienti, \emph{Laws of rare
  events for deterministic and random dynamical systems}, Trans. Amer. Math.
  Soc. \textbf{367} (2015), no.~11, 8229--8278. \MR{3391915}

\bibitem[AN05]{AN05}
Jon Aaronson and Hitoshi Nakada, \emph{On the mixing coefficients of piecewise
  monotonic maps}, Israel J. Math. \textbf{148} (2005), 1--10, Probability in
  mathematics. \MR{2191221}

\bibitem[BS21]{BS21}
Viviane Baladi and Daniel Smania, \emph{Fractional susceptibility functions for
  the quadratic family: {M}isiurewicz-{T}hurston parameters}, Comm. Math. Phys.
  \textbf{385} (2021), no.~3, 1957--2007. \MR{4284005}

\bibitem[BSTV03]{BSTV03}
H.~Bruin, B.~Saussol, S.~Troubetzkoy, and S.~Vaienti, \emph{Return time
  statistics via inducing}, Ergodic Theory Dynam. Systems \textbf{23} (2003),
  no.~4, 991--1013. \MR{MR1997964 (2005a:37004)}

\bibitem[CLM22]{CLM22}
Douglas Coates, Stefano Luzzatto, and Muhammad Mubarak, \emph{Doubly
  intermittent full branch maps with critical points and singularities}, 2022,
  preprint.

\bibitem[DT21]{DT21}
Mark~F. Demers and Mike Todd, \emph{Asymptotic escape rates and limiting
  distributions for multimodal maps}, Ergodic Theory Dynam. Systems \textbf{41}
  (2021), no.~6, 1656--1705. \MR{4252206}

\bibitem[FFMa18]{FFM18}
Ana Cristina~Moreira Freitas, Jorge~Milhazes Freitas, and M\'{a}rio
  Magalh\~{a}es, \emph{Convergence of marked point processes of excesses for
  dynamical systems}, J. Eur. Math. Soc. (JEMS) \textbf{20} (2018), no.~9,
  2131--2179. \MR{3836843}

\bibitem[FFT10]{FFT10}
Ana Cristina~Moreira Freitas, Jorge~Milhazes Freitas, and Mike Todd,
  \emph{Hitting time statistics and extreme value theory}, Probab. Theory
  Related Fields \textbf{147} (2010), no.~3-4, 675--710. \MR{2639719
  (2011g:37015)}

\bibitem[FFT12]{FFT12}
\bysame, \emph{The extremal index, hitting time statistics and periodicity},
  Adv. Math. \textbf{231} (2012), no.~5, 2626--2665. \MR{2970462}

\bibitem[FFT13]{FFT13}
\bysame, \emph{The compound {P}oisson limit ruling periodic extreme behaviour
  of non-uniformly hyperbolic dynamics}, Comm. Math. Phys. \textbf{321} (2013),
  no.~2, 483--527. \MR{3063917}

\bibitem[FFTV16]{FFTV16}
Ana Cristina~Moreira Freitas, Jorge~Milhazes Freitas, Mike Todd, and Sandro
  Vaienti, \emph{Rare events for the {M}anneville-{P}omeau map}, Stochastic
  Process. Appl. \textbf{126} (2016), no.~11, 3463--3479. \MR{3549714}

\bibitem[FP12]{FP12}
Andrew Ferguson and Mark Pollicott, \emph{Escape rates for gibbs measures},
  Ergod. Theory Dynam. Systems \textbf{32} (2012), 961--988.

\bibitem[HLV05]{HLV05}
N.~Haydn, Y.~Lacroix, and S.~Vaienti, \emph{Hitting and return times in ergodic
  dynamical systems}, Ann. Probab. \textbf{33} (2005), no.~5, 2043--2050.
  \MR{2165587}

\bibitem[HLV07]{HLV07}
N.~Haydn, E.~Lunedei, and S.~Vaienti, \emph{Averaged number of visits}, Chaos
  \textbf{17} (2007), no.~3, 033119, 13. \MR{2356973}

\bibitem[HP14]{HP14}
N.~T.~A. Haydn and Y.~Psiloyenis, \emph{Return times distribution for {M}arkov
  towers with decay of correlations}, Nonlinearity \textbf{27} (2014), no.~6,
  1323--1349. \MR{3215837}

\bibitem[HWZ14]{HWZ14}
Nicolai T.~A. Haydn, Nicole Winterberg, and Roland Zweim\"{u}ller,
  \emph{Return-time statistics, hitting-time statistics and inducing}, Ergodic
  theory, open dynamics, and coherent structures, Springer Proc. Math. Stat.,
  vol.~70, Springer, New York, 2014, pp.~217--227. \MR{3213501}

\bibitem[Kal21]{Kal21}
Olav Kallenberg, \emph{Foundations of modern probability}, Probability Theory
  and Stochastic Modelling, vol.~99, Springer, Cham, [2021] \copyright 2021,
  Third edition [of 1464694]. \MR{4226142}

\bibitem[Kel12]{K12}
Gerhard Keller, \emph{Rare events, exponential hitting times and extremal
  indices via spectral perturbation}, Dyn. Syst. \textbf{27} (2012), no.~1,
  11--27. \MR{2903242}

\bibitem[KY21]{KY21}
Yuri Kifer and Fan Yang, \emph{Geometric law for numbers of returns until a
  hazard under {$\varphi$}-mixing}, Israel J. Math. \textbf{244} (2021), no.~1,
  319--357. \MR{4344031}

\bibitem[LSV99]{LSV99}
Carlangelo Liverani, Beno{\^{\i}}t Saussol, and Sandro Vaienti, \emph{A
  probabilistic approach to intermittency}, Ergodic Theory Dynam. Systems
  \textbf{19} (1999), no.~3, 671--685. \MR{MR1695915 (2000d:37029)}

\bibitem[Mis81]{M81}
Micha{\l} Misiurewicz, \emph{Absolutely continuous measures for certain maps of
  an interval}, Inst. Hautes \'Etudes Sci. Publ. Math. \textbf{53} (1981),
  17--51. \MR{623533 (83j:58072)}

\bibitem[MS93]{MS93}
Welington~de Melo and Sebastian Strien, \emph{One-dimensional dynamics},
  Ergebnisse der Mathematik und ihrer Grenzgebiete. 3. Folge / A Series of
  Modern Surveys in Mathematics, Springer Berlin, Heidelberg, 1993.

\bibitem[Res08]{R08}
Sidney~I. Resnick, \emph{Extreme values, regular variation and point
  processes}, Springer Series in Operations Research and Financial Engineering,
  Springer, New York, 2008, Reprint of the 1987 original. \MR{2364939
  (2008h:60002)}

\bibitem[Rue09]{Rue09}
David Ruelle, \emph{Structure and {$f$}-dependence of the {A}.{C}.{I}.{M}. for
  a unimodal map {$f$} is {M}isiurewicz type}, Comm. Math. Phys. \textbf{287}
  (2009), no.~3, 1039--1070. \MR{2486672}

\bibitem[Ryc83]{R83}
Marek Rychlik, \emph{Bounded variation and invariant measures}, Studia Math.
  \textbf{76} (1983), no.~1, 69--80. \MR{MR728198 (85h:28019)}

\bibitem[Zwe16]{Zwe16}
Roland Zweim\"{u}ller, \emph{The general asymptotic return-time process},
  Israel J. Math. \textbf{212} (2016), no.~1, 1--36. \MR{3504316}

\bibitem[Zwe19]{Zwe18}
\bysame, \emph{Hitting-time limits for some exceptional rare events of ergodic
  maps}, Stochastic Process. Appl. \textbf{129} (2019), no.~5, 1556--1567.
  \MR{3944776}

\bibitem[Zwe22]{Z22}
\bysame, \emph{Hitting times and positions in rare events}, Ann. H. Lebesgue
  \textbf{5} (2022), 1361--1415. \MR{4526257}

\end{thebibliography}

\end{document}